\documentclass[11pt]{article}
\usepackage[T1]{fontenc}
\usepackage{lmodern}
\usepackage{amsmath,amssymb,amsthm,mathtools}
\usepackage[a4paper,margin=27mm]{geometry}
\usepackage[hidelinks]{hyperref}

\DeclareMathOperator{\area}{area}
\DeclareMathOperator{\dinv}{dinv}

\DeclareMathOperator{\defc}{defc}

\theoremstyle{definition}
\newtheorem{definition}{Definition}[section]
\theoremstyle{plain}
\newtheorem{lemma}[definition]{Lemma}
\newtheorem{corollary}[definition]{Corollary}
\newtheorem{theorem}[definition]{Theorem}
\title{An involution on Dyck paths in the region $\defc\le\min(\area,\dinv)$ that interchanges area and dinv}
\author{Graham Hawkes\thanks{\texttt{grhmhwks@gmail.com}}}
\date{}

\begin{document}

\maketitle

\begin{abstract}
We construct an explicit involution on Dyck paths satisfying $\defc\le\min(\area,\dinv)$ that interchanges area and dinv.
The construction relies on a number of new combinatorial objects developed here
and two main external tools: the dual Dyck insertion of \cite{Hawkes26} and the Garsia--Milne involution principle~\cite{GarsiaMilne81}, as formulated by Doyle~\cite{Doyle19}. In addition, we give an involution on Dyck paths
with $\defc \le n-3$ that interchanges area and dinv.  This last task relies on work of Loehr and Warrington~\cite{LoehrWarrington09}.
\end{abstract}

\section*{Introduction}
For a Dyck path $D$ of semilength $n$, put $M=\binom n2$ and
$\defc(D)=M-\area(D)-\dinv(D)$. We construct an explicit involution on the paths satisfying
\[
 \defc(D)\le\area(D)\le\binom n2-2\defc(D),
\]
that interchanges area and dinv, as well as one with the same properties on the paths satisfying

\[
 \defc(D)\le n-3.
\]

In Section~1 we introduce lowers and uppers and show that Dyck paths are in $\area$/$\dinv$ preserving bijection with lowers and also with the union of uppers and full Dyck skeletons.
We argue the desired numerical equality of Dyck paths in the ``flat middle'' region ($\area\in[\defc,M-2\defc]$) follows from showing lower and upper balance here.
In Section~2 we make two main reductions to the outstanding problem and a simple normalizing translation.
In Section~3 we assume a specific non-existence statement (proved in the appendix) and prove balance by giving a telescoping argument and introducing $pTs$ objects and an involution on them.
In Section~4 we build an explicit bijection out of our previous constructions. In section~5 we use this bijection along with results of Loehr and Warrington~\cite{LoehrWarrington09} to give a complete combinatorial proof of $q,t$-Catalan symmetry for deficit at most $n-3$. 

This work continues the author's efforts to give combinatorial explanations of $q,t$-Catalan symmetry~\cite{Hawkes24,Hawkes26}.
It also follows the work of Lee, Li, and Loehr~\cite{LeeLiLoehr18} and its development by Han, Lee, Li, and Loehr~\cite{HanLeeLiLoehr20}, which approach this symmetry through chain decompositions.

\section{Lowers and Uppers}
\subsection{Definitions}

\begin{definition}
An \emph{affine Dyck sequence} is a sequence of integers $d_1,\ldots,d_n$ with $d_{i+1}\le d_i+1$.
\end{definition}

\begin{definition}
A \emph{reverse affine Dyck sequence} is a sequence of integers $d_1,\ldots,d_n$ with $d_i\le d_{i+1}+1$.
\end{definition}

\begin{definition}
A \emph{Dyck sequence} is an affine Dyck sequence with nonnegative entries starting at $0$.
\end{definition}

\begin{definition}
For an integer sequence $C=(c_1,\ldots,c_m)$, its \emph{area}, \emph{dinv}, and \emph{deficit} are
\[
 \begin{gathered}
 \area(C)=\sum_{i=1}^{m}c_i,
 \qquad
 \dinv(C)=\#\{(i,j):1\le i<j\le m,\ c_i-c_j\in\{0,1\}\},\\
 \defc(C)=\binom{m}{2}-\area(C)-\dinv(C).
 \end{gathered}
\]
\end{definition}

\begin{definition}
An \emph{extractable element} is an element $e$ of value $v$ in a Dyck sequence such that $e$ is the leftmost occurrence of $v$, exactly one element of value $v-1$ occurs to the left of $e$, and the element immediately to the right of $e$, if it exists, does not have value $v+1$.
\end{definition}

\begin{definition}
A \emph{Dyck skeleton} is a Dyck sequence with no extractable element except possibly its rightmost element.
\end{definition}

\begin{definition}
A \emph{Full Dyck Skeleton} is a Dyck sequence with no extractable elements.
\end{definition}

\begin{definition}
A \emph{lower} is a triple $(P,A,B)$ where $|P|+|A|+|B|=n$, $P$ is a Dyck skeleton, $A$ is a nonnegative affine Dyck sequence, $B$ is a nonnegative reverse affine Dyck sequence, $P[-1]:A$ is affine, $B:(\max(P)-1)$ is reverse affine, and $|A|=k+1$, $|B|=k$ for some $k\ge0$.
Its area and dinv are defined by
\[
 \area(P,A,B)=\area(P:A:B)+k,
 \qquad
 \dinv(P,A,B)=\dinv(P:A:B)-k.
\]
\end{definition}

\begin{definition}
An \emph{upper} is a triple $(P,A,B)$ where $|P|+|A|+|B|=n$, $P$ is a Dyck skeleton, $A$ is a nonnegative affine Dyck sequence, $B$ is a nonnegative reverse affine Dyck sequence, $P[-1]:A$ is affine, $B:(\max(P)-1)$ is reverse affine, and $|A|=k$, $|B|=k+1$ for some $k\ge0$.
Its area and dinv are defined by
\[
 \area(P,A,B)=\area(P:A:B)+(k+1),
 \qquad
 \dinv(P,A,B)=\dinv(P:A:B)-(k+1).
\]
\end{definition}

\begin{definition}
\emph{Extraction} removes the leftmost extractable element from a Dyck sequence.
\end{definition}

\begin{definition}
\emph{Injection} of an element of value $v$ into a Dyck sequence inserts it immediately after the leftmost occurrence of $v-1$.
\end{definition}

\begin{lemma}
\label{lem:full-skeleton-statistics}
If $D$ is a Full Dyck Skeleton of length $n$, then each entry of value $j$ in $D$ has at least two entries of value $i$ to its left for every $0\le i<j$. Moreover,
\[
 \dinv(D)+2\area(D)\le\binom{n}{2}.
\]
\end{lemma}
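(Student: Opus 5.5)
The plan is to prove the structural claim first, by induction on values plus a short "run" argument, and then derive the inequality by a direct pair-counting argument that uses only the structural claim.

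\textbf{Step 1: the leftmost entry of each value.} Let $D$ be a Full Dyck Skeleton, and let $e$ be the leftmost occurrence of some value $v\ge 1$. I first show that $e$ has at least two entries of value $v-1$ to its left. Since $D$ starts at $0$ and rises by at most $1$ per step, some entry of value $v-1$ precedes $e$. Suppose exactly one does. Consider the maximal run $e=e_0,e_1,\ldots,e_r$ of consecutive entries starting at $e$ whose values are $v,v+1,\ldots,v+r$. Each $e_s$ is the leftmost occurrence of $v+s$, because any entry of value $v+s$ must be preceded by an entry of value $v+s-1$, and hence inductively by an entry of value $v$, which cannot occur before $e$. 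For $s\ge1$, the only entry of value $v+s-1$ to the left of $e_s$ is $e_{s-1}$; for $s=0$ this holds by assumption. By maximality, the entry after $e_r$ (if there is one) does not have value $v+r+1$. So $e_r$ is extractable, which contradicts the assumption that $D$ is full.

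\textbf{Step 2: all smaller values.} Now take any entry $x$ of value $j$ and any $i<j$. The leftmost occurrence of $i+1$ lies weakly to the left of $x$; this follows by descending from $x$ through the $\le 1$ rises, or from the fact that each value must appear before the next higher one. By Step 1, that leftmost $i+1$ has at least two entries of value $i$ strictly to its left. Hence $x$ has at least two entries of value $i$ to its left, which proves the first assertion. I expect Step 1 to be the only real obstacle. The key point there is to follow the increasing run to its end rather than trying to argue directly at $e$, since $e$ itself may fail to be extractable only because of the "next entry has value $v+1$" clause.

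\textbf{Step 3: the inequality.} Count non-dinv pairs. For a position $p$ with value $c_p=v$, each earlier position $a<p$ with $c_a<c_p$ gives $c_a-c_p<0$, so $(a,p)$ is not a dinv pair. By Steps 1--2 there are at least $2v$ such $a$, namely two for each of the values $0,\ldots,v-1$. Pairs with distinct right endpoints are distinct, so summing over $p$ gives
\[
 \binom n2-\dinv(D)\;\ge\;\sum_p 2c_p=2\area(D),
\]
which is the desired bound.
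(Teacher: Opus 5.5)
Your proof is correct and follows the paper's argument: the same maximal run $v,v+1,\ldots,v+r$ that ends in an extractable entry for the structural claim, and the same count of non-dinv pairs charged to their right endpoint for the inequality. Your version is slightly more careful than the paper's, since you work explicitly with the leftmost occurrence of $v$ and check that each run entry is the leftmost of its value with exactly one entry of the preceding value to its left; the paper leaves both points implicit.
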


\begin{proof}
First, we show that each entry of value $g>0$ has at least two $g-1$ entries to its left. If $g$ had only one $g-1$ to its left, it would be extractable unless a $g+1$ were immediately to its right. Consider the run $g,g+1,\ldots,g+t$ starting at this entry, with $t$ maximal. Then $g+t$ would be extractable, a contradiction.

Thus, for any entry of value $j>0$, there are two $j-1$ entries to its left. To the left of the leftmost of those $j-1$ entries there are two $j-2$ entries, and so on down to value $0$. Hence there are at least two entries of every value $0\le i<j$ to the left of $j$, proving the first part.

To count $\dinv(D)+2\area(D)$, declare the owner of a dinv pair to be its rightmost element. For each index $i$, the entry $D_i$ owns at most $(i-1)-2D_i$ dinv pairs by the first part of the lemma, so its contribution to $\dinv(D)+2\area(D)$ is at most $i-1$. Summing over $i$ gives the result.
\end{proof}

\begin{lemma}
\label{lem:extraction}
Suppose that the element $e$ is extracted from a Dyck sequence and then the element $f$ is extracted.  Then $(e,f)$ is a positive reverse affine Dyck sequence.  Further, when $f$ is extracted it has the same or fewer elements to its right as $e$ had when extracted. 
\end{lemma}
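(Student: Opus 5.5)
We would prove both parts by comparing the extractable elements of $D$ with those of $D'=D\setminus e$, where $D$ is the Dyck sequence and $e$, of value $v$ at position $p$, is its leftmost extractable element. First, positivity. An extractable element of value $u$ needs exactly one entry of value $u-1$ to its left. A Dyck sequence has no negative entries, so $u\ge1$. Hence $e$ and $f$ are both positive. Next, $D'$ is again a Dyck sequence. The only new adjacency is between $g$, the element immediately left of $e$, and $e'$, the element immediately right of $e$. Since $e$ is extractable, $e'\neq v+1$, so $e'\le v\le g+1$. Also $g$ exists, because $v\ge1$.

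The key observation handles everything to the right of $p$. Because $D$ starts at $0$ and rises by at most $1$ per step, every value $0,1,\ldots,v-1$ occurs strictly before position $p$. These occurrences are all still present in $D'$, since only $e$ was removed. So any element of $D'$ lying to the right of the old position $p$ with value $\le v-1$ is not the leftmost occurrence of its value, and cannot be extractable. Therefore, if $f$ lies right of $p$, its value $w$ satisfies $w\ge v\ge v-1$. Moreover $f$ then has strictly fewer elements to its right than $e$ had.

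Elements of $D'$ to the left of $p$ need the real check; we expect this step to be the main obstacle. For an element $x$ strictly left of $g$, nothing in the definition of extractability changes. Whether $x$ is the leftmost occurrence of its value, the number of entries of each value to its left, and its right neighbour are all the same in $D$ and $D'$. Such an $x$ was not extractable in $D$, since $e$ was the leftmost extractable element, so it is not extractable in $D'$. The same holds for $g$ itself, except that its right neighbour changed from $e$ to $e'$. So $g$ can only become extractable if its failure in $D$ was due to its right neighbour. That means $e=g+1$, that is, $g$ has value $v-1$. In that case, if $f=g$, its value is $w=v-1$, and it has exactly as many elements to its right in $D'$ as $e$ had in $D$.

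Combining the cases, $f$ is either $g$, with value $v-1$ and the same number of elements to its right, or lies right of $p$, with value $\ge v$ and fewer elements to its right. In both cases $v\le w+1$, so $(e,f)$ is a positive reverse affine Dyck sequence, and the count of elements to the right does not increase, as the lemma asserts.
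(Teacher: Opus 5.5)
Your proposal is correct and follows essentially the same route as the paper. You split on whether $f$ lies left or right of $e$, observe that only the immediate left neighbour $g$ can newly become extractable (giving $f=e-1$ with the same number of entries to its right), and note that anything to the right has value at least $e$ because $0,\ldots,e-1$ all occur before $e$. You simply give more detail than the paper does, for example checking that $D\setminus e$ is still a Dyck sequence and deriving $g=e-1$ from the changed right neighbour rather than from the extractability of $e$.
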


\begin{proof}
Before either extraction, $f$ lay either to the left of $e$ or to its right. In the first case, if $f$ lay more than one position to the left of $e$, it would have been extracted first. Since $e$ is extractable, $f=e-1$. In the second case, elements of every value $0,1,\ldots,e-1$ still lie to the left of $f$ before its extraction, so $f\ge e$. The second part follows from the configurations of the two possibilities. The positivity follows from the fact $0$ cannot be extracted.
\end{proof}

\begin{lemma}
\label{lem:injection}
If $1 \leq f\le\max(D)+1$ for a Dyck sequence $D$, then $f$ successfully injects into $D$ and becomes extractable.
\end{lemma}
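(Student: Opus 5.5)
We need to show two things: injection is well defined, so the result is a Dyck sequence, and the injected element is then extractable. The argument is a direct check of the three conditions in the definition of an extractable element. Let $D=(d_1,\ldots,d_m)$ with $d_1=0$, and let $1\le f\le \max(D)+1$.

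\textbf{Step 1: injection is well defined and preserves the Dyck property.} Since $D$ starts at $0$ and consecutive entries rise by at most one, every value $0,1,\ldots,\max(D)$ occurs in $D$. In particular $f-1\in[0,\max(D)]$ occurs, so its leftmost occurrence $d_p=f-1$ exists. Insert $f$ right after position $p$, giving $D'=(d_1,\ldots,d_p,f,d_{p+1},\ldots,d_m)$. Then:
\begin{itemize}
\item the first entry is still $0$, because $f$ is inserted after position $p\ge1$;
\item all entries remain nonnegative, since $f\ge1$;
\item the step into $f$ is $f=d_p+1$, which is allowed;
\item the step out of $f$ satisfies $d_{p+1}\le d_p+1=f<f+1$, so the affine condition holds.
\end{itemize}
Hence $D'$ is a Dyck sequence.

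\textbf{Step 2: the inserted element is the leftmost occurrence of $f$.} Suppose some $d_i=f$ with $i\le p$. Since $D$ starts at $0$ and rises by at most one per step, the value $f-1$ would have to occur at some index before $i$, hence before $p$. This contradicts the choice of $p$ as the leftmost occurrence of $f-1$.

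\textbf{Step 3: exactly one $f-1$ lies to the left of the inserted element.} The entries to the left of $f$ in $D'$ are $d_1,\ldots,d_p$. By the choice of $p$, the value $f-1$ appears among them only at position $p$.

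\textbf{Step 4: the right neighbour is not $f+1$.} If the inserted element is last, there is nothing to check. Otherwise its right neighbour is $d_{p+1}$, and $d_{p+1}\le d_p+1=f\ne f+1$.

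Together, Steps 2--4 show the injected element is extractable. None of these steps is a real obstacle; the only point needing care is the observation in Step 2 that any occurrence of $f$ must be preceded by an occurrence of $f-1$.
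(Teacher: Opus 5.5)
Your proof is correct and follows the same route as the paper's: insert $f$ after the leftmost $f-1$ and check extractability directly. The key point in both is that the new right neighbour was previously the right neighbour of $f-1$, so it is at most $f$. You also spell out several details the paper's one-line proof leaves implicit: that $f-1$ exists, that the Dyck property is preserved, and that the inserted entry is the leftmost occurrence of $f$.
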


\begin{proof}
The $f$ is inserted after the first $f-1$ so is extractable unless an element greater than it is immediately to its right which is impossible since that element was previously immediately to the right of $f-1$
\end{proof}

\begin{lemma}\label{lem:skeljection}
If $1 \leq f\le\max(D)+1$ for a Dyck skeleton $D$, then $f$ successfully injects and becomes the new leftmost extractable element.
\end{lemma}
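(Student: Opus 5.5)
The plan is to use Lemma~\ref{lem:injection} to see that $f$ becomes extractable after injection, and then to check that no element to the left of $f$ in the new sequence is extractable. Fix a Dyck skeleton $D$ and $1\le f\le\max(D)+1$. First, the injection is well defined. $D$ starts at $0$ and increases by at most $1$ per step, so every value $0,1,\ldots,\max(D)$ occurs in $D$. In particular $f-1$ occurs, and it has a leftmost occurrence, call it $x$. Let $D'$ be the result of inserting $f$ immediately after $x$.

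Next I check that $D'$ is again a Dyck sequence. The step from $x$ to $f$ is $+1$. The element $y$ that originally followed $x$ satisfies $y\le (f-1)+1=f$, so the step from $f$ to $y$ is also allowed. By Lemma~\ref{lem:injection}, $f$ is extractable in $D'$.

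It remains to show that $f$ is the leftmost extractable element of $D'$. Take any element $e$ of $D'$ to the left of $f$, so $e$ lies at or before $x$. The three conditions in the definition of an extractable element are:
\begin{itemize}
\item[(i)] $e$ is the leftmost occurrence of its value;
\item[(ii)] exactly one element of value $v-1$ lies to the left of $e$;
\item[(iii)] the right neighbour of $e$ does not have value $v+1$.
\end{itemize}
Conditions (i) and (ii) depend only on the entries to the left of $e$, together with $e$ itself. These entries are the same in $D$ and $D'$, because the new entry is inserted to the right of $e$.

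Now split into two cases.
\begin{itemize}
\item If $e$ lies strictly before $x$, its right neighbour is also unchanged. So $e$ is extractable in $D'$ if and only if it is extractable in $D$. Since $D$ is a skeleton, $e$ is extractable in $D$ only if it is the rightmost element of $D$. But $e$ is not rightmost, because $x$ lies to its right.
\item If $e=x$, its right neighbour in $D'$ is $f=(f-1)+1$. This violates condition (iii), so $x$ is not extractable in $D'$.
\end{itemize}
In both cases $e$ is not extractable, so $f$ is the leftmost extractable element of $D'$.

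The main point needing care is the case $e=x$. Here the skeleton hypothesis gives no information, because $x$ may be the rightmost element of $D$ and hence possibly extractable in $D$. The argument must therefore rely on the new right neighbour $f$ blocking condition (iii), rather than on the skeleton property. The rest is bookkeeping about which entries lie to the left of the insertion point.
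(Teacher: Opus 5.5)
Your proof is correct and follows the same route as the paper: $f$ is extractable by Lemma~\ref{lem:injection}, the entry $f-1$ immediately to its left is blocked because its new right neighbour is $f$, and every entry further left is non-extractable because $D$ is a Dyck skeleton. You also justify something the paper leaves implicit, namely that those earlier entries keep the same extractability status, since their left context and right neighbour are unchanged by the insertion.
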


\begin{proof}
The only thing to show is that there is no extractable to the left of $f$.  The element immediately to the left of $f$ is $f-1$ so is not extractable, any element further left cannot be extractable since $D$ was a Dyck skeleton.
\end{proof}

\begin{lemma}
\label{lem:successive-injections}
Suppose $(e,f)$ is a positive reverse affine Dyck sequence and $f$ successfully injects and becomes the leftmost extractable element. Then $e$ successfully injects and becomes the new leftmost extractable element.
\end{lemma}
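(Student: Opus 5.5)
Let $D$ be the Dyck sequence into which $f$ is injected, and let $D'$ be the result; we then inject $e$ into $D'$. Since $(e,f)$ is a positive reverse affine Dyck sequence, $e\ge 1$ and $e\le f+1$. After injecting $f$, the sequence $D'$ contains $f$, so $\max(D')\ge f\ge e-1$. Lemma~\ref{lem:injection} then says that $e$ injects successfully into $D'$ and is extractable there. It remains to show that no extractable element of $D'$, other than $e$, lies to the left of $e$. I split into the cases $e=f+1$ and $e\le f$.

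\emph{Case $e=f+1$.} The leftmost occurrence of $e-1=f$ in $D'$ is the injected $f$ itself. This holds because $f$ was inserted right after the first $f-1$, and any earlier $f$ would have to sit to the right of some $f-1$ anyway. So $e$ is placed immediately after $f$, and $f$ is no longer extractable: an $f+1$ now sits immediately to its right. Every element to the left of $f$ was already non-extractable in $D'$, because $f$ was the leftmost extractable element there. Inserting $e$ to the right of these elements does not change their status, since extractability of an element depends only on its left context and its right neighbour. Hence $e$ is the leftmost extractable element.

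\emph{Case $e\le f$.} Then $e$ is inserted right after the leftmost $e-1$. Because $e-1\le f-1$ and the first $f-1$ lies to the left of the injected $f$, this position is to the left of $f$ (or at worst the first $e-1$ is the first $f-1$ when $e=f$; see below). Every element to the left of $e$ was to the left of $f$ in $D'$, hence non-extractable there. Its left context is unchanged by the insertion of $e$. Its right neighbour changes only for the element $e-1$ itself, which now has $e$ to its right and is certainly not made extractable by that. So nothing to the left of $e$ is extractable.

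When $e=f$, the new $e$ is inserted immediately after the first $f-1$, that is, just before the old $f$. The old $f$ is then no longer the leftmost occurrence of $f$, so it is no longer extractable. This is consistent with $e$ being the new leftmost extractable element.

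The main obstacle is the careful bookkeeping that inserting $e$ does not make some earlier element extractable. An element's extractability depends on whether it is the leftmost occurrence of its value, on how many copies of value $v-1$ lie to its left, and on its right neighbour. I would check each of these conditions for elements left of the insertion point. The count of $v-1$ copies to the left is unchanged, and the only element whose right neighbour changes is the copy of $e-1$ just before $e$. That copy of $e-1$ was non-extractable before, and placing $e$ after it makes it even less so: $e=(e-1)+1$ now sits immediately to its right, which by itself rules out extractability.
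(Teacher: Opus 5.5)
Your proof is correct and is essentially the paper's argument: the paper splits on whether $e$ is injected to the left of $f$ (its new left neighbour $e-1$ is then blocked, and everything further left was already non-extractable) or to its right (which forces $e=f+1$ and stops $f$ being extractable), and your split into $e\le f$ versus $e=f+1$ is the same dichotomy with the bookkeeping made explicit. The one step both versions leave tacit is that, when $e\le f$, the first occurrence of $e-1$ in a Dyck sequence lies at or before the first occurrence of $f-1$; this follows from the sequence starting at $0$ and satisfying $d_{i+1}\le d_i+1$.
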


\begin{proof}
If $e$ is injected to the left of $f$, the element immediately to its left is $e-1$ so not extractable and any extractable element further to the left would contradict $f$ having been the leftmost extractable element. Otherwise, $e=f+1$ and is injected immediately to the right of $f$. The only thing to check is that $f$ is no longer extractable, which follows because $e>f$ and $e$ is immediately to its right.
\end{proof}

\subsection{Bijections}

\paragraph{The bijection $\phi$ from Dyck sequences to lowers.}
Given a Dyck sequence $D$ of length $n$, successively extract elements as long as the $i$th extraction can remove an element outside the last $i+1$ entries of the current sequence. Let $k$ be the number of extractions performed.

Form $B$ from the extracted positive reverse affine Dyck sequence (the sequence formed by appending new extractions to the right) by decreasing each entry by $1$. Let $P$ be the first $n-(2k+1)$ entries of the remaining sequence and $A$ its last $k+1$ entries. Set $\phi(D)=(P,A,B)$.

The inverse $\phi^{-1}$ is given by adding $1$ to each entry of $B$ and injecting the resulting elements (right to left).

\paragraph{The bijection $\psi$ from Dyck sequences that are not full Dyck skeletons to uppers.}

Given a Dyck sequence $D$ of length $n$, successively extract elements as long as the $i$th extraction can remove an element outside the last $i-1$ entries of the current sequence. Let $k+1$ be the number of extractions performed.

Let $B$ be the extracted positive reverse affine Dyck sequence (formed left to right) with each entry decreased by $1$. Let $P$ be the first $n-(2k+1)$ entries of the remaining sequence and $A$ its last $k$ entries. Set $\psi(D)=(P,A,B)$.

The inverse $\psi^{-1}$ is given by adding $1$ to each entry of $B$ and injecting the resulting elements (right to left).

\begin{proof}[Well-definedness of the forward maps]
The extracted elements are all at least $1$, so $B$ has nonnegative entries. The sequence $P$ is a Dyck skeleton because otherwise the process would have continued. The sequence $P[-1]:A$ is affine because $P:A$ is. When the last element was extracted, the element immediately to its left remained in $P$ and was one less than the extracted element and so equal to $B[-1]$. Thus, $B[-1]\le\max(P)$. Hence $B:(\max(P)-1)$ is reverse affine, since $B$ is reverse affine.
\end{proof}

\begin{proof}[Well-definedness of the inverse maps]
The inequality $B[-1]+1\le\max(P)+1$, together with Lemma~\ref{lem:injection} and the fact that $B$ is reverse affine, implies that all injections into $P:A$ succeed.
\end{proof}

Both $\phi^{-1}\circ\phi$ and $\psi^{-1}\circ\psi$ are the identity. This follows immediately from the fact that adding $1$ after subtracting it does not change a value, and that injecting a value after extracting it does not change a sequence.

Conversely, both $\phi\circ\phi^{-1}$ and $\psi\circ\psi^{-1}$ are the identity. Since $B[-1]:(\max(P)-1)$ is reverse affine, the first element is successfully injected into $P:A$ and becomes extractable. Again, because $B[-1]:(\max(P)-1)$ is reverse affine it is inserted to the left of $A$, thus it can be considered as having been injected into $P$ and since $P$ is a Dyck skeleton, the first injected element becomes the leftmost extractable element by Lemma ~\ref{lem:skeljection} of $P$ (and thus of $P:A$). Further, each successive injected element becomes the new leftmost extractable element by Lemma~\ref{lem:successive-injections}. Thus, successive extractions undo the injections. Since the last extraction did not occur in the last $|A|$ elements none of them did by Lemma~\ref{lem:extraction} so the process did not terminate before $|B|$ extractions.  After all these extractions, the remaining sequence is $P:A$, with $P$ a Dyck skeleton. The only available extraction would therefore be from $A$ or from the rightmost element of $P$, neither of which is allowed so the procedure stops and returns $(P,A,B)$.

\begin{proof}[Preservation of area and dinv]
First collect the extracted elements to the left of the sequence and consider dinv of the concatenation of these extracted elements and the remaining sequence. Each extraction creates one dinv pair , giving $k$ additional pairs for lowers and $k+1$ for uppers but these are accounted for in the lower or upper statistics. Then moving the extracted sequence to the right of the remaining entries (where it ends up as $B$) and decreasing each of its entries by $1$ preserves the internal dinv of both the extracted and remaining sequences and only potentially changes the dinv between the two sets. For any dinv pair between the two sets, each of the form $i:i$ pair becomes an $i:(i-1)$ pair under the transformation, and each $i:(i-1)$ pair becomes an $(i-1):(i-1)$ pair. No other pairs appear. Thus, both bijections preserve dinv.  The area is preserved because $k$ or $k+1$ is added to the lower or upper statistic to counteract the lost area. 
\end{proof}

By Lemma~\ref{lem:full-skeleton-statistics}, every Full Dyck Skeleton $D$ satisfies $\area(D)\le\defc(D)$. Consequently, for fixed deficit, showing that the number of lowers of area $a$ equals the number of uppers of area $a+1$ for $a\in[\defc,M-2\defc)$ shows that the number of Dyck paths is the same at every area in this range.  This will be the subject of the next two sections.

\section{Reduction}
In this section, we make two reductions and a translation, leaving a simpler problem for Section~3. 

Let a ``key" be a triple $P,X,h$ where $P$ is a Dyck skeleton, $X$ is a non-negative multiset of odd length such that $|P|+|X|=n$ and $h \geq 0$.  A lower or upper $(P',A,B)$ is
associated to the key $(P,X,h)$ if and only if $P'=P$, the multiset union of $A$ and $B$ is $X$, and $\dinv(A:B)=h$. Under this association the set of keys partitions the sets
of lowers and uppers.  To show that there are the same number of lowers of deficit $d$ and area $a$ as uppers of deficit $d$ and area $a+1$ it suffices to show that for each 
key $(P,X,h)$ the number of associated lowers of deficit $d$ and area $a$ and associated uppers of deficit $d$ and area $a+1$ is the same. Moreover unless

\[
 \begin{aligned}
 a&=\sum P+\sum X+k,\\
 d&=M-\sum P-\sum X-\dinv(P)-C(P,X)-h.
 \end{aligned}
\]

\indent where $C(U,V)$ is the number of pairs $(u,v)$ with $u\in U$, $v\in V$, and $u-v\in\{0,1\}$ for any multisets of integers $U,V$, both of these numbers are $0$.  Therefore, substituting these values into $a \in [d, M-2d)$ and
rewriting as two inequalities and noting that $M=\binom{|P|+|X|}{2}$, it is enough to show that when

\begin{align}
 h&\ge \mathcal A(P,X):=
 \binom{|P|+|X|}{2}-2\sum P-2\sum X-\dinv(P)-C(P,X)-k,\label{eq:weak-wall}\\
 2h&>\mathcal B(P,X):=
 \binom{|P|+|X|}{2}-\sum P-\sum X-2\dinv(P)-2C(P,X)+k.\label{eq:strict-wall}
\end{align}
then the numbers of lowers and uppers associated to the key $(P,X,h)$ are the same.  We call this showing ``balance" on the key $(P,X,h)$.

\subsection{Reduction of Bounds}

\begin{lemma}
\label{lem:reduction-bounds}
If $(P,X,h)$ satisfies $h\ge\mathcal A(P,X)$ and $X$ supports an upper or lower, then either
\[
 \max(P)=P[-1]+1\quad\text{and}\quad\max(X)\le P[-1]+1,
\]
or $\max(P)=P[-1]$.
\end{lemma}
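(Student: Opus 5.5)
The plan is to argue by contraposition. Suppose $X$ supports a lower or upper $(P,A,B)$ with $|A|+|B|=2k+1$, and suppose neither alternative holds. Since $\max(P)\ge P[-1]$, this means either $\max(P)\ge P[-1]+2$, or $\max(P)=P[-1]+1$ and $\max(X)\ge P[-1]+2$. In both cases I will show $h=\dinv(A:B)<\mathcal A(P,X)$.

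First I rewrite $\mathcal A(P,X)$ using $\binom{|P|+|X|}{2}=\binom{|P|}{2}+|P||X|+\binom{|X|}{2}$:
\[
\mathcal A(P,X)=\Bigl[\tbinom{|P|}{2}-2\textstyle\sum P-\dinv(P)\Bigr]+\sum_{x\in X}\bigl(|P|-2x-C(P,\{x\})\bigr)+\tbinom{|X|}{2}-k .
\]
Since $h=\dinv(A:B)\le\binom{2k+1}{2}=\binom{|X|}{2}$, it suffices to show that the first bracket plus the sum over $X$ is at least $k+1$.

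For the first bracket I reuse the proof of Lemma~\ref{lem:full-skeleton-statistics}. Every entry of $P$ except possibly the last is non-extractable, so the run argument gives at least two entries of each smaller value to its left. A small adjustment is needed because the run $g,g+1,\dots$ might end at the last entry; in that case the last entry is itself extractable, and I will handle this edge case separately. The owner-counting argument then shows each non-last entry contributes $\le i-1$. The last entry $P[-1]$ still has at least two copies of each value below $P[-1]$ to its left, except possibly one value. So the bracket is $\ge -1$, or $\ge 0$ in most configurations. I also expect positive slack here whenever $\max(P)>P[-1]$: then $P$ contains entries of value $P[-1]+1$ (at least two copies, by the skeleton property) to the left of $P[-1]$, and these are not paired with $P[-1]$.

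For the sum over $X$, fix $x\in X$. The term $|P|-C(P,\{x\})$ counts entries of $P$ with value outside $\{x,x+1\}$. Whenever $x\le\max(P)+1$, the skeleton property gives at least $2x$ such entries of value $<x$ (two of each of $0,\dots,x-1$), so each term is $\ge 0$. Extra slack comes from entries of $P$ with value $\ge x+2$. The goal is to show that at least $k+1$ elements of $X$ (or $k$ of them, plus slack from the bracket) have an entry of value $\ge x+2$ in $P$ to spare.

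This is where the failure of the lemma's conclusion enters, through the constraints on $A$ and $B$. Because $P[-1]:A$ is affine, $A[1]\le P[-1]+1$. Because $B:(\max(P)-1)$ is reverse affine, $B$ is bounded relative to $\max(P)$: each $B$ entry is $\le \max(P)+(\text{distance to end})$, with $B[-1]\le\max(P)$. I then split into the two cases.

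\begin{itemize}
\item If $\max(P)\ge P[-1]+2$, then small entries of $X$, in particular those of $A$ that stay near $P[-1]$, see the values $\max(P)$ in $P$ as extra non-paired entries.
\item If $\max(P)=P[-1]+1$ but some $x\in X$ exceeds $P[-1]+1$, then $x\ge\max(P)+1$. For such $x$ the $2x$ bound fails, and I must instead compare with the structure of $A$ or $B$ that forced $x$ to be that large. The affine or reverse-affine chain leading up to $x$ produces many small dinv-losses inside $A:B$, so $h$ falls well below $\binom{|X|}{2}$.
\end{itemize}

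The main obstacle is this bookkeeping: showing that the total slack is at least $k+1$, not merely positive, while also controlling elements $x>\max(P)+1$, whose $P$-slack can be negative. My first attempt would be to trade the deficit of such an $x$ against the dinv pairs inside $A:B$ that $x$ cannot form with the chain of smaller values forced beneath it by the (reverse) affine condition. I expect this to be where the real case analysis lives.
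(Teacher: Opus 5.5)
There is a genuine gap, and it is in your first reduction. Writing $\mathcal A(P,X)$ as your bracket plus $\Sigma_X:=\sum_{x\in X}\bigl(|P|-2x-C(P,\{x\})\bigr)$ plus $\binom{|X|}{2}-k$ is fine. But you then discard $\binom{|X|}{2}-\dinv(A:B)$ and set out to prove $\text{bracket}+\Sigma_X\ge k+1$, and that inequality is false exactly when $X$ has entries above $\max(P)$.

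Take $P=(0,0,1,0)$, a full Dyck skeleton with $\max(P)=1=P[-1]+1$. With $A=(1,2,3)$ and $B=(2,1)$ you get a lower with $k=2$ and $\max(X)=3$. The bracket is $6-2-4=0$ and the terms of $\Sigma_X$ are $1,0,-2,0,1$, so $\text{bracket}+\Sigma_X=0<3$. Yet $\mathcal A(P,X)=8>5=\dinv(A:B)$, and the whole margin comes from the five non-dinv pairs inside $A:B$.

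The same failure occurs in your first case. For instance, $P=(0,0,1,1,2,0)$, $A=(1,2,3,4)$, $B=(4,3,2)$ gives $\text{bracket}+\Sigma_X=0<4$. So large entries of $X$ are not confined to your second case, and your first bullet does not cover them. The entries equal to $m-1$ in the second case cause a similar problem: their pairs with $P[-1]=m-1$ and with the first $m$ are both dinv pairs, so their $P$-slack can be $0$. Your closing idea of trading the negative $P$-slack of a large $x$ against dinv losses inside $A:B$ is the right instinct. However, it draws on exactly the quantity $\binom{|X|}{2}-h$ you already threw away. So the plan as written is inconsistent, and the decisive count is never carried out.

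The missing idea is to keep all non-dinv pairs, both between $P$ and $X$ and inside $A:B$, and show their number exceeds $2\sum X+k$.

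First, under the negated conclusion $P[-1]<\max(P)=m$. So $P[-1]$ is not the leftmost occurrence of its value, hence not extractable, and $P$ is a full Dyck skeleton. Lemma~\ref{lem:full-skeleton-statistics} then gives bracket $\ge 0$ with no edge case.

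The paper then assigns each non-dinv pair an owner:
\begin{itemize}
\item for a pair meeting $P$, the owner is the $A:B$ entry;
\item for a pair inside $A:B$, the owner is the larger entry, except that a pair formed by the first $m+1$ of $A:B$ and an $m-1$ goes to the $m-1$.
\end{itemize}
It checks that every entry $x$ of $A:B$ owns at least $2x+1$ such pairs, except the first $m+1$, which owns at least $2(m+1)$. An entry $m+e$ gets all $|P|\ge 2m+2$ pairs with $P$. It gets at least $2e-1$ more inside $A:B$, using that $(m-1):A$ is affine, $B:(m-1)$ is reverse affine, and $|A|,|B|$ differ by one. A second-case $m-1$ takes its pair with the first $m+1$.

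Summing gives at least $2\sum X+2k$ non-dinv pairs when an $m+1$ is present (and then $k\ge 1$), or $2\sum X+2k+1$ otherwise. Either way this exceeds $2\sum X+k$, which is what $h<\mathcal A(P,X)$ requires.
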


\begin{proof}
Suppose we have a valid upper or lower $(A,B)$ on $(P,X,h)$. Put $m=\max(P)$. Suppose, for contradiction, that either $P[-1] \leq m-2$ or else that $P[-1]=m-1$ and $\max(X)>m$.
We wish to show $\dinv(A:B) < \mathcal A(P,X)$ meaning 
\[
 \begin{aligned}
 &\binom{|P|}{2}+|P||X|+\binom{|X|}{2}-\dinv(A:B)-\dinv(P) -C(P,X)-2\sum P-2\sum X>k.
 \end{aligned}
\]
Since $P[-1]<\max(P)$, the entry $P[-1]$ is not extractable, so $P$ is a Full Dyck Skeleton. By Lemma~\ref{lem:full-skeleton-statistics}, it suffices to show
\[
 |P||X|+\binom{|X|}{2}-\dinv(A:B)-C(P,X)-2\sum X>k.
\]

In other words, the number of pairs between $P$ and $X$ and within $X$ that do not form dinv pairs, minus twice the sum of the entries of $X$, must be greater than $k$.

Assign an owner to each such pair as follows:
\begin{enumerate}
 \item[(i)] If one element is in $P$, the owner is the element in $A:B$.
 \item[(ii)] If both elements are in $A:B$, the owner is the larger element, unless the larger element is the leftmost $m+1$ in $A:B$ and the other element is $m-1$; in that case, the owner is the $m-1$.
\end{enumerate}

For each entry of $A:B$, we show that it owns at least one more pair not counted by dinv than twice its value, unless it is the first $m+1$.

\begin{enumerate}
\renewcommand{\labelenumi}{\textbf{(\alph{enumi})}}
\item For $e>0$, an entry $m+e$ (that is not the first $m+1$) owns at least $2e-1$ pairs within $A:B$, since $(m-1):A$ and $B:(m-1)$ are affine and reverse affine, respectively. It also owns pairs with at least $2m+2$ entries of $P$: $P[-1]$, the first $m$ in $P$, and at least two entries of each value $0,1,\ldots,m-1$ to the left of that first $m$. The total is therefore
\[
 2e-1+2m+2=2(m+e)+1.
\]
The first $m+1$ owns at least $2m+2=2(m+1)$ pairs from $P$.

\item Each $m$ in $A:B$ owns $2m$ pairs with the at least two entries of each value $0,1,\ldots,m-1$ to the left of the first $m$ in $P$, and also owns the pair with $P[-1]$, giving $2m+1$ pairs.

\item Each entry $t<m-1$ owns $2t$ pairs with the at least two entries of each value $0,1,\ldots,t-1$ in $P$, and one pair with the first $m$ in $P$, giving $2t+1$ pairs.

\item Each $m-1$ owns at least $2(m-1)$ pairs with the two entries of each value $0,1,\ldots,m-2$ in $P$, plus one additional pair: with $P[-1]$ if $P[-1]<m-1$, or with the first $m+1$ in $A:B$ if $P[-1]=m-1$. So the total is $2(m-1)+1$.
\end{enumerate}

Therefore, the difference is at least $2k+1$ if there is no $m+1$, or at least $2k$ if there is an $m+1$. If there is an $m+1$, then $k\ge1$, so in either case the difference is greater than $k$.
\end{proof}

\subsection{Reduction to Staircase}

\begin{lemma}
\label{lem:staircase-reduction}
Let \(P\) be a Dyck skeleton ending at \(m=\max(P)\), and let \(X\) be nonnegative content with \(|X|=2k+1\) and assume there is some $h$ such that 
$(P,X,h)$ supports an upper or lower.  
Then if \(h\ge \mathcal A(P,X)\) and \(2h>\mathcal B(P,X)\), then \(2h>\mathcal B(S_m,X)\) where  \(S_m=012\cdots m\).
\end{lemma}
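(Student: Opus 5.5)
The plan is to compare the two walls $\mathcal A(P,X)$ and $\mathcal B(P,X)$ with $\mathcal B(S_m,X)$ by removing from $P$ the entries that are not part of a copy of the staircase. Since $P$ is a Dyck sequence ending at its maximum $m$, the first occurrences of $0,1,\dots,m$ appear in increasing order. They form a subsequence equal to $S_m$. Let $Q$ be the multiset of the remaining $q=|P|-m-1$ entries. The hypotheses give two bounds, $2h\ge 2\mathcal A(P,X)$ and $2h>\mathcal B(P,X)$. Hence every convex combination $\lambda\cdot 2\mathcal A(P,X)+(1-\lambda)\mathcal B(P,X)$ with $0\le\lambda<1$ is strictly below $2h$. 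It therefore suffices to exhibit one such combination, possibly depending on $(P,X)$, that is at least $\mathcal B(S_m,X)$. I would first try the two extreme choices and split into cases according to which one works.

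The first step is to write $\mathcal B(P,X)-\mathcal B(S_m,X)$ as a sum of contributions attributed to the entries of $Q$. Adding an element to a configuration of size $N$ raises $\binom{N}{2}$ by $N$, one for each pair it forms. In $\mathcal B$ each pair with a dinv relation (inside $P$, or between $P$ and $X$ via $C$) is subtracted twice, and each element is also subtracted by its value. Assign each pair that involves an element of $Q$ to that element, or to the later one if both lie in $Q$, as in the ownership argument of Lemma~\ref{lem:reduction-bounds}. The difference is then $\sum_{e\in Q}\bigl(N_e-2\delta_e-e\bigr)$, where $N_e$ is the number of owned pairs and $\delta_e$ the number of owned pairs that are dinv or $C$ pairs. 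I do the same for $2\mathcal A(P,X)-\mathcal B(S_m,X)$. There each element of $Q$ is charged $3e$ instead of $e$ and the $\dinv(P)$, $C$ terms enter with different weights. In addition a fixed global difference $2\mathcal A(S_m,X)-\mathcal B(S_m,X)$ appears, which depends only on $X$, $m$ and $k$.

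The second step bounds $\delta_e$ for $e\in Q$. Since $P$ is a Dyck skeleton, every non-final entry of value $j$ has at least two entries of each smaller value to its left. This follows exactly as in the proof of Lemma~\ref{lem:full-skeleton-statistics}, because extractability failures only use leftward information. So an element $e\in Q$ forms non-dinv pairs with at least $2e$ earlier elements of $P$. Also, $X$ supports an upper or lower with $P[-1]=m$. The affine condition on $m:A$ and the reverse affine condition on $B:(m-1)$ then bound how many entries of $X$ can equal $e$ or $e-1$, in terms of the number of entries of $X$ near $e$. I expect $N_e-2\delta_e-e\ge0$ whenever $e$ has few dinv partners in $X$. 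That gives the case $\lambda=0$. In the opposite case, $e$ has many partners in $X$ of value $e$ or $e-1$. Then $\sum X$ is large relative to $k$, which makes the $-3\sum X$ and $-k$ terms favour the combination $\lambda$ near $1$.

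The main obstacle is the interaction between $Q$ and $X$: the $C(Q,X)$ term can be as large as about $2k+1$ per element of $Q$. The content of the lemma is presumably that the $\mathcal A$-wall rules out exactly the configurations in which the $\mathcal B$-comparison fails. To handle this I would first try $\lambda=\tfrac12$, that is, show $2\mathcal A(P,X)+\mathcal B(P,X)\ge 2\mathcal B(S_m,X)$ term by term. Here a pair $(e,x)$ with $e\in Q$, $x\in X$, $e-x\in\{0,1\}$ costs $4$ but is paired against a gain of roughly $3$ from the extra $\binom{N}{2}$ terms plus the value penalties. The remaining deficit would be paid using the $-k$ in $\mathcal A$ against the $+k$ in $\mathcal B$. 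If that bookkeeping fails, I would fall back on the case split by whether $\max(X)\le m$ or some entry of $X$ equals $m+1$, mirroring the cases of Lemma~\ref{lem:reduction-bounds}.
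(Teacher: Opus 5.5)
\textbf{There is a genuine gap: the proposal never shows that one of your two extreme choices always works, and the heuristics you give for choosing between them point the wrong way.}

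Your framing is correct as far as it goes. The paper's proof is exactly a dichotomy between your extremes. If $\Delta:=\mathcal B(S_m,X)-\mathcal B(P,X)\le0$, then $\lambda=0$ works. Otherwise the paper proves $4\mathcal A(P,X)-2\mathcal B(S_m,X)>0$, which is $\lambda=1$ with strict inequality.

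Three concrete problems with the plan:
\begin{itemize}
\item \emph{$\lambda$ must be chosen globally and genuinely ranges over $[0,1)$.} Take $P=S_2$, $X=\{0\}$, $h=0$ (the lower $((0),())$). Then $\mathcal A(P,X)=-2$ and $\mathcal B(P,X)=\mathcal B(S_2,X)=-1$, so the combination is $-1-3\lambda$ and only $\lambda=0$ works. Hence your $\lambda=\tfrac12$ term-by-term attempt fails. Now take $P=0^p$ (so $m=0$) and $X=0^{2k+1}$. Then $\Delta=\binom p2+(p-1)(2k+1)>0$ and $2\mathcal A(P,X)=4k^2>2k^2-1=\mathcal B(S_0,X)$. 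The admissible $\lambda$ are exactly $\lambda\ge\Delta/(2k^2+1+\Delta)$, which tends to $1$ as $p\to\infty$. Both examples have $\max X\le m$, so your fallback split on $\max(X)$ does not separate them.
\item \emph{Your sign heuristic is backwards.} $2\mathcal A(P,X)-\mathcal B(S_m,X)$ contains $-3\sum X-3k$, while $\mathcal B(P,X)-\mathcal B(S_m,X)$ contains no $\sum X$ at all. So large $\sum X$ and the $k$-terms work \emph{against} $\lambda$ near $1$. In the second example every $e\in Q$ has all $2k+1$ entries of $X$ as $C$-partners, yet $\sum X=0$. It is precisely the smallness of $X$ that makes the $\mathcal A$-wall usable.
\item \emph{Minor.} It is false that every non-final entry of a Dyck skeleton has two entries of each smaller value to its left: in $0,0,1,2,3$ the $2$ has only one $1$ before it. The claim does hold for entries of $Q$, since the first occurrence of their value precedes the final increasing run.
\end{itemize}

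The missing idea is the link between the two branches: failure of $\lambda=0$ must be turned into structural information that makes $\lambda=1$ succeed. The paper works with $Y$, the non-rightmost occurrences (the same multiset as your $Q$), and sets $t=\max Y$, $a=|X_{\le t}|$, $b=|X_{>t}|$. It uses three estimates:
\begin{itemize}
\item $\dinv(P)+\sum Y\le\binom{|Y|+1}{2}$. Here each dinv pair is owned by its left element, and the prefix of $P$ ending at the last element of $Y$ is a full skeleton.
\item $\binom{t+1}2\le\sum Y\le t|Y|-\binom{t+1}2$.
\item $C(Y,X)\le(|Y|-t+1)a-x_t$.
\end{itemize}
Together these give
\[
\Delta\le(|Y|-t)\Bigl[\tfrac{|Y|-t+1}{2}-(m-t+b-a)\Bigr]-t(m+a+b)+2(a-x_t),
\]
so $\Delta>0$ forces $|Y|-t\ge2(m-t+b-a)$ unless $a\ge m-t+b$.

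The support hypothesis does not enter by counting entries of $X$ near $e$. It enters through the no-gap bound $\sum X\le(t-1)a+x_t+mb+\binom{b+1}{2}$, which yields
\[
4\mathcal A(P,X)-2\mathcal B(S_m,X)\ge(a-1)^2+2+(m-t+b)\bigl[4(|Y|-t)-5-2(m-t+b-a)\bigr].
\]
This is positive in both subcases. Without a global estimate of this kind, relating the size of $Q$ to $m$, $a$ and $b$, your per-element charging cannot close.
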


\begin{proof}

Let \(Y\) refer to the entries that are not the rightmost instance of their value in $P$\footnote{We may assume $Y$ is not empty
since the result is immediate otherwise}. 
For each dinv pair in $P$ declare its owner to be the leftmost entry of that pair.
Since $P$ ends on its maximum every such owner belongs to $Y$.
Therefore  $\dinv(P)+\sum(Y)$ can be calculated by adding the value of each element in $Y$ to
the number of dinv pairs it owns.  Suppose an element of value $j$ in $Y$ has $i_0$ values in $Y$ that are less
than it (note it also has $j$ values not in $Y$ less than it) 
and has $i$ instances of $j$ to its right in $P$.  Thus in $P$ there are $i_0+j$ values less than it
that it could form unequal dinv pairs with, but 
Lemma~\ref{lem:full-skeleton-statistics}\footnote{The initial subsequence of $P$
ending on the rightmost element of $Y$ is a full Dyck skeleton; Suppose that subsequence ends on $t$, we show the next element is not $t+1$:
Otherwise, it is the last $t+1$ and $P[-1]=\max(P)$ implies all remaining are values greater than $t+1$ contradicting $t \in Y$.
Thus $t$ must be un-extractable in $P$ on account of at least two $t-1$s to its left so it is not (nor is any element to its left) extractable in the subsequence.}
 implies that $2j$ of them precede the element.  On the other hand the element owns exactly 
$i$ equal value dinv pairs.  Thus its total number of dinv pairs plus its value is at most $i_0+j-2j+i+j=i_0+i$.
Since the values of $i_0+i$ range from $1$ to $|Y|$ across the elements of $Y$ we get

\begin{equation}
\label{eq:staircase-prefix-bound}
\boxed{\dinv(P)+\sum Y
\le\sum_{i=1}^{|Y|}i=\binom{|Y|+1}{2}.}
\end{equation}

Next, Let $t$ be the maximum value of $Y$ 
and set $a=|X_{\le t}|$, 
$b=|X_{>t}|$
and let \(K\) be the total number of pairs of entries within $Y$, between $Y$ and $X$, and between $Y$ and other elements of $P$:
\[ 
K=|Y|(m+a+b)+\binom{|Y|+1}{2}.
\]
 Define:
\[
\begin{aligned}
\Lambda&:=\mathcal A(S_m,X)-\mathcal A(P,X)
        =2\sum Y+\dinv(P)+C(Y,X)-K,\\
\Delta&:=\mathcal B(S_m,X)-\mathcal B(P,X)
        =\sum Y+2\dinv(P)+2C(Y,X)-K.
\end{aligned}
\]
By \eqref{eq:staircase-prefix-bound},
\begin{equation}
\label{eq:staircase-wall-changes}
\begin{aligned}
\Lambda&\le \sum Y-|Y|(m+a+b)+C(Y,X),\\
\Delta&\le\binom{|Y|+1}{2}-\sum Y-|Y|(m+a+b)+2C(Y,X).
\end{aligned}
\end{equation}

Since by   Lemma~\ref{lem:full-skeleton-statistics} all $0,1,\ldots t$ occur in $Y$ we have
\begin{equation}
\label{eq:staircase-y-sum-bounds}
\binom{t+1}{2}\le \sum Y\le t|Y|-\binom{t+1}{2},
\end{equation}
Next we bound $C(Y,X)$. We need only consider the $a-x_t$ values of $X$ that are less than $t$ and the $x_t$ values equal to $t$. 
Since each of $0,1,\ldots,t$ occurs in $Y$, $t-1$ of these do not pair with the former and $t$ do not pair with the latter.  Thus
\begin{equation}
\label{eq:staircase-cross-count-bound}
C(Y,X)\le  (|Y|-(t-1))(a-x_t) + (|Y|-t)x_t = (|Y|-t+1)a-x_t.
\end{equation}

Substituting \eqref{eq:staircase-y-sum-bounds} and \eqref{eq:staircase-cross-count-bound} into \eqref{eq:staircase-wall-changes} gives
\[
\begin{aligned}
\Delta\le{}&
\binom{|Y|+1}{2}-\binom{t+1}{2}-|Y|(m+a+b)
+2(|Y|-t+1)a-2x_t.
\end{aligned}
\]
Factoring the binomials, writing \(|Y|=(|Y|-t)+t\) in the third term,
and splitting off \(2a\) from the fourth term gives
\[
\begin{aligned}
\Delta\le{}&\frac{(|Y|-t)(|Y|+t+1)}{2}-(|Y|-t)(m+a+b)-t(m+a+b)+2a(|Y|-t)+2a-2x_t.
\end{aligned}
\]
Isolating the coefficient of \(|Y|-t\) and rearranging $t$'s and $a$'s gives
\begin{equation}
\label{eq:staircase-delta}
\begin{aligned}
\Delta\le{}&(|Y|-t)
\left[\frac{|Y|-t+1}{2}-(m-t+b-a)\right]-t(m+a+b)+2(a-x_t).
\end{aligned}
\end{equation}

Next we derive a bound on \(\sum X\). The entries less than \(t\) have sum at most \((t-1)(a-x_t)\). The entries equal to \(t\) have sum at most \(tx_t\). Let \(b_1\) be the number of entries greater than \(t\) but at most \(m\); they sum to at most \(b_1m\). Let \(b_2\) be the number of entries greater than \(m\). Since \(\max(P)=m\) and \((P,X,h)\) supports a lower or upper for some \(h\) no value \(m<v<\max(X)\) can be absent from $X$ so their sum is at most \( (m+1)+(m+2)+\cdots+(m+b_2)=b_2m+\binom{b_2+1}{2}\). Since \(b_1+b_2=b\), these four bounds give
\begin{equation}
\label{eq:staircase-tail-sum}
\sum X\le(t-1)a+x_t+mb+\binom{b+1}{2}.
\end{equation}

Next use \(\mathcal A(P,X)=\mathcal A(S_m,X)-\Lambda\),
\[
\mathcal B(S_m,X)
=2\mathcal A(S_m,X) +3\left(\binom{m+1}{2}+\sum X+k\right)-\binom{m+a+b+1}{2}.
\]
and $a+b-1=2k$ to get
\[
\begin{aligned}
&4\mathcal A(P,X)-2\mathcal B(S_m,X)\\
&\quad=2\binom{m+a+b+1}{2}-6\binom{m+1}{2}
-6\sum X-3(a+b-1)-4\Lambda\\
&\quad\ge 2\binom{m+a+b+1}{2}-6\binom{m+1}{2}-3(a+b-1)\\
&\qquad-6\Biggl[(t-1)a+x_t+mb
+\binom{b+1}{2}\Biggr]\\
&\qquad-4\Biggl[t|Y|-\binom{t+1}{2}+(|Y|-t+1)a
-x_t-|Y|(m+a+b)\Biggr].
\end{aligned}
\]
Here \eqref{eq:staircase-tail-sum} bounds \(\sum X\), while the first line of \eqref{eq:staircase-wall-changes}, followed by the
upper bounds for \(\sum Y\) and \(C(Y,X)\) in \eqref{eq:staircase-y-sum-bounds} and \eqref{eq:staircase-cross-count-bound}, respectively, bounds \(\Lambda\).

Next we prove
\(\Delta>0\Longrightarrow4\mathcal A(P,X)-2\mathcal B(S_m,X)>0\).
First we note that
\[
\begin{aligned}
&2\binom{m+a+b+1}{2}-6\binom{m+1}{2}-6mb
-6\binom{b+1}{2}+4\binom{t+1}{2}\\
&\quad=2\binom{a+1}{2}+2a(m+b)+2\binom{m+b+1}{2}
-6\binom{m+b+1}{2}+4\binom{t+1}{2}\\
&\quad=2\binom{a+1}{2}+2a(m+b)-2(m+b+t+1)(m+b-t).
\end{aligned}
\]
In the first equality, we split the first binomial and combine the next three
terms. In the second, we combine the last three terms and factor their sum. 
Substituting this equality into the preceding inequality and collecting
the \(|Y|\)-terms and the \(x_t\)-terms gives
\[
\begin{aligned}
&4\mathcal A(P,X)-2\mathcal B(S_m,X)\\
&\quad\ge 2\binom{a+1}{2}+2a(m+b)-2(m+b+t+1)(m+b-t)\\
&\qquad-3(a+b-1)-6(t-1)a-4(1-t)a\\
&\qquad+(-6+4)x_t+\bigl[-4t-4a+4(m+a+b)\bigr]|Y|\\
&\quad=2\binom{a+1}{2}+2a(m+b)+(m+b-t)\left[4|Y|-2(m+b+t+1)\right]\\
&\qquad-3(a+b-1)-2(t-1)a-2x_t\\
&\quad=(m+b-t)
\left[4(|Y|-t)-2-2(m+b-t-a)\right]+a^2-3b+3-2x_t.
\end{aligned}
\]
 Now
\[
\begin{aligned}
-3b+a^2+3-2x_t
&\geq-3(m-t+b)+(a-1)^2+2,
\end{aligned}
\]
since $m \geq t$ and $a \geq x_t$. Therefore
\begin{equation}
\label{eq:staircase-other-wall}
\begin{aligned}
&4\mathcal A(P,X)-2\mathcal B(S_m,X)
\ge(a-1)^2+2+
(m-t+b)
\left[4(|Y|-t)-5-
2(m-t+b-a)\right].
\end{aligned}
\end{equation}

\textbf{If \(a\ge m-t+b\),}
the prefactor in \eqref{eq:staircase-other-wall} lies between zero and \(a\),
and the bracket is at least \(-1\), since \(|Y|-t\ge1\).
Their product is therefore at least \(-a\), so
\[
4\mathcal A(P,X)-2\mathcal B(S_m,X)
\ge\bigl(a-\tfrac32\bigr)^2+\tfrac34>0.
\]

\textbf{If \(a<m-t+b\),}
\(\Delta>0\) will make the bracket in \eqref{eq:staircase-other-wall} positive. Indeed,
The term \(-t(m+a+b)+2(a-x_t)\) in \eqref{eq:staircase-delta}
is zero for \(t=0\), since \(a=x_0\),
and negative for \(t\ge1\), since \(m+b>a\).
Thus \(\Delta>0\) and \(|Y|-t\ge1\) force the bracket in \eqref{eq:staircase-delta} to be positive. Hence
\begin{equation}
\label{eq:staircase-integrality}
\begin{aligned}
|Y|-t+1&>2(m-t+b-a),\\
|Y|-t&\ge2(m-t+b-a).
\end{aligned}
\end{equation}
By \eqref{eq:staircase-integrality}, the bracket in \eqref{eq:staircase-other-wall} is at least
\(6(m-t+b-a)-5\ge1\).
The product is therefore nonnegative, making the whole expression positive.

Thus \(\Delta>0\) gives \(\mathcal B(S_m,X)<2\mathcal A(P,X)\),
while \(\Delta\le0\) gives \(\mathcal B(S_m,X)\le \mathcal B(P,X)\), whence the Lemma follows.

\end{proof}

\subsection{Translation}

\begin{definition}
A \emph{simple lower} (respectively, \emph{simple upper}) is a pair \((A,B)\) of integer sequences such that \(0:A\) is affine, \(B:(-1)\) is reverse affine, and \((|A|,|B|)=(k+1,k)\) (respectively, \((k,k+1)\)) for some \(k\ge0\).
\end{definition}

\begin{definition}
A \emph{simple key} is a pair \((X_0,h)\), where \(X_0\) is a multiset of integers of size \(2k+1\) for some \(k\ge0\), and \(h\) is a nonnegative integer. A simple lower or upper \((A,B)\) is associated to \((X_0,h)\) if \(X_0\) is the multiset union of \(A\) and \(B\) and \(h=\dinv(A:B)\). A simple key is \emph{balanced} if it has equally many associated simple lowers and simple uppers.
\end{definition}

For \(|X_0|=2k+1\), define
\[
\beta(X_0)=\binom{2k+2}{2}+k-2\mu(X_0)-\sum X_0,
\]
where \(\mu(X_0)\) is the number of zeros in \(X_0\) plus twice the number of negative entries.

\begin{lemma}
\label{lem:simple-key-reduction}
If every simple key \((X_0,h)\) with \(2h>\beta(X_0)\) is balanced, then for every \((a,d)\) with \(a\in[d,M-2d)\), there are as many lowers of area \(a\) and deficit \(d\) as uppers of area \(a+1\) and deficit \(d\).
\end{lemma}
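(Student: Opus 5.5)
The plan is to pass from an arbitrary key $(P,X,h)$ to a simple key by translating $X$ down by $m=\max(P)$, and then to check that the wall condition on keys implies the wall condition $2h>\beta(X_0)$ on simple keys. By the discussion at the start of the Reduction section, it suffices to prove balance on every key $(P,X,h)$ with $h\ge\mathcal A(P,X)$ and $2h>\mathcal B(P,X)$. We may assume the key supports at least one lower or upper, since otherwise balance is trivial. Lemma~\ref{lem:reduction-bounds} then leaves two cases: (I) $P[-1]=m$, or (II) $P[-1]=m-1$ and $\max(X)\le m$.

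\emph{Step 1 (translation bijection).} Set $X_0=X-m$, meaning every entry is decreased by $m$. Subtracting a constant from every entry changes neither dinv nor affineness. So $(A,B)\mapsto(A-m,B-m)$ preserves $h=\dinv(A:B)$ and the lengths $(k+1,k)$ or $(k,k+1)$. Nonnegativity of $A,B$ is automatic, because the multiset $X$ is fixed and nonnegative.
\begin{itemize}
\item In case (I), the conditions "$m:A$ affine" and "$B:(m-1)$ reverse affine" become exactly "$0:A_0$ affine" and "$B_0:(-1)$ reverse affine".
\item In case (II), the original conditions only ask $A[1]\le m$ and $B[-1]\le m$, which hold automatically since $\max X\le m$. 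The simple conditions only ask $A_0[1]\le 1$ and $B_0[-1]\le 0$, which also hold automatically since every entry of $X_0$ is at most $0$.
\end{itemize}
Thus in both cases the lowers and uppers on $(P,X,h)$ are in bijection with the simple lowers and uppers on $(X_0,h)$. Consequently balance of $(X_0,h)$ gives balance of $(P,X,h)$.

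\emph{Step 2 (identify the wall).} I will show directly that $\mathcal B(S_m,X)=\beta(X-m)$, where $S_m=012\cdots m$.
\begin{itemize}
\item Since $\dinv(S_m)=0$, we have $\mathcal B(S_m,X)=\binom{m+2k+2}{2}-\binom{m+1}{2}-\sum X-2C(S_m,X)+k$.
\item Count the partners of each $x\in X$ in $S_m$, that is, entries $s\in S_m$ with $s-x\in\{0,1\}$. An entry $x<m$ has two partners, $x=m$ has one, and $x>m$ has none. Since $x\ge0$, this says $C(S_m,X)=\mu(X_0)$.
\item Write $\sum X=\sum X_0+m(2k+1)$, and use $\binom{m+2k+2}{2}-\binom{m+1}{2}-m(2k+1)=\binom{2k+2}{2}$.
\end{itemize}
These give the identity. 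In case (I), Lemma~\ref{lem:staircase-reduction} yields $2h>\mathcal B(S_m,X)=\beta(X_0)$. The hypothesis of the present lemma then says $(X_0,h)$ is balanced, and Step 1 finishes this case.

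\emph{Step 3 (the main obstacle: case (II)).} Here $P$ does not end on its maximum, so Lemma~\ref{lem:staircase-reduction} does not apply verbatim, and I need a separate proof that $2h>\beta(X_0)$. My plan is to rerun the staircase argument with the roles adapted, which I expect to be the delicate bookkeeping.
\begin{itemize}
\item $P$ is a Full Dyck Skeleton, because $P[-1]<\max P$ so the last entry is not extractable.
\item Lemma~\ref{lem:full-skeleton-statistics} then gives $\dinv(P)+2\sum P\le\binom{|P|}{2}$, and every value $j$ appears at least twice before any $j+1$.
\item Because all entries of $X$ are at most $m$, each $x\in X$ satisfies $C(P,\{x\})\ge C(S_m,\{x\})$.
\item I would first try to show $\mathcal B(P,X)\ge\mathcal B(S_m,X)$ outright, by charging the extra $|P|-(m+1)$ entries of $P$ against the surplus pairs they create. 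This mirrors the difference $\Delta$ in the staircase proof, but with $Y$ now being all non-rightmost occurrences and the last entry $m-1$ sitting outside $Y$.
\item If that bound fails in some subcase, I would fall back, exactly as in the staircase proof, on combining $h\ge\mathcal A(P,X)$ to get $2h\ge 2\mathcal A(P,X)>\mathcal B(S_m,X)$, using the bound $\sum X\le m(2k+1)$ that holds in this case.
\end{itemize}
Once $2h>\beta(X_0)$ is in hand, Step 1 again transports balance back to $(P,X,h)$, completing the proof.
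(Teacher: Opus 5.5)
Your Steps 1 and 2 are correct. They coincide with the paper's treatment of the case $P[-1]=m$: translate by $m$, apply Lemma~\ref{lem:staircase-reduction}, and identify $\mathcal B(S_m,X)=\beta(X_0)$.

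The gap is Step 3. For the case $P[-1]=m-1$, $\max(X)\le m$ you give only a plan, so the lemma is not yet proved. Moreover, your preferred option is false in general. Take $P=(0,0,1,0)$, a full Dyck skeleton with $m=1$ and $P[-1]=0$, together with $X=\{0,0,0\}$, $k=1$ and $h=3$. Then:
\begin{itemize}
\item $\mathcal A(P,X)=2\le h$ and $\mathcal B(P,X)=-11<2h$, so both walls hold;
\item the key carries the lower $((0,0),(0))$ and the upper $((0),(0,0))$;
\item nevertheless $\mathcal B(P,X)=-11<-2=\beta(X_0)=\mathcal B(S_1,X)$.
\end{itemize}
So no argument of the form $\mathcal B(P,X)\ge\mathcal B(S_m,X)$ can work in this case.

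The paper sidesteps the issue entirely. In this case both endpoint conditions are vacuous, so by Proposition~4.16 of \cite{Hawkes26} the reverse affine factor may be traded for an affine one. Corollary~3.20 of \cite{Hawkes26} then gives equally many lowers and uppers for \emph{every} $h$. No wall inequality is needed, and the lemma's hypothesis is not used in this case.

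If you prefer to stay with your translation, your fallback $2h\ge 2\mathcal A(P,X)>\mathcal B(S_m,X)$ does succeed, but it must actually be proved. One way: put $p=|P|$ and
\[
s(x)=\#\{u\in P:u<x\}-2x+\#\{u\in P:u\ge x+2\}.
\]
Expanding $\binom{p+2k+1}{2}=\binom p2+p(2k+1)+\binom{2k+1}{2}$ gives
\[
\mathcal A(P,X)=\Bigl[\binom p2-2\sum P-\dinv(P)\Bigr]+\sum_{x\in X}s(x)+2k^2 .
\]
Lemma~\ref{lem:full-skeleton-statistics} makes the bracket nonnegative. It also makes each $s(x)$ nonnegative, since $x\le m$ means two copies of each value below $x$ precede the maximum. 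Finally it gives $s(x)\ge 2(m-x-2)+1$ whenever $x\le m-3$.

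On the other side, since every entry of $X_0$ is nonpositive,
\[
\beta(X_0)=2k^2-1+\sum_{x\in X,\,x<m}(m-x-2)\le 2k^2-1+\sum_{x\in X,\,x\le m-3}(m-x-2).
\]
Comparing the two bounds gives $2h\ge2\mathcal A(P,X)>\beta(X_0)$. Only with such an argument, or with the paper's external citation, is case (II) actually closed.
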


\begin{proof}
By the discussion at the beginning of Section~2, it suffices to balance every key \((P,X,h)\) satisfying \eqref{eq:weak-wall} and \eqref{eq:strict-wall}. A key supporting no lower or upper is automatically balanced. For any remaining key, put \(m=\max(P)\). Lemma~\ref{lem:reduction-bounds} leaves two possibilities.
Suppose first that \(P[-1]=m-1\) and \(\max(X)\le m\). Both endpoint conditions are then automatic and by Proposition~4.16 of \cite{Hawkes26} we may assume the second 
factor is affine rather than reverse affine. Thus it follows from Corollary~3.20 of \cite{Hawkes26} that the key is balanced. 
It remains to consider \(P[-1]=m\).  By Lemma~\ref{lem:staircase-reduction}, \(2h>\mathcal B(S_m,X)\) so it suffices to show balance on all keys $(S_m,X,h)$ with \(2h>\mathcal B(S_m,X)\).
Let \(X_0\) be the multiset obtained by subtracting \(m\) from every entry of \(X\). 
Since \(\dinv(S_m)=0\), \(\sum S_m=\binom{m+1}{2}\), \(\sum X=\sum X_0+m(2k+1)\), and \(C(S_m,X)=\mu(X_0)\), substitution gives
\[
\mathcal B(S_m,X)=\binom{2k+2}{2}+k-2\mu(X_0)-\sum X_0=\beta(X_0).
\]
Hence \(2h>\beta(X_0)\), so the simple key is balanced by hypothesis.  Since the lowers and uppers associated to $(S_m,X,h)$ are in bijection with the lowers and uppers associated to
$(X_0,h)$ this implies the former key is also balanced.
\end{proof}
\section{Balance}

By the reduction of the previous section, it remains to prove that every simple key $(X_0,h)$ with $\max(X_0)\ge1$ is balanced whenever
\[
2h>\beta(X_0).
\]
Indeed, if $\max(X_0)\le0$, then both endpoint conditions are automatic and, by Proposition~4.16 of \cite{Hawkes26}, we may assume the second factor is affine rather than reverse affine; balance then follows from Corollary~3.20 of \cite{Hawkes26}. We first handle the cases $k=0$ and $k=1$ directly. For $k\ge2$, we express the difference between the numbers of simple lowers and simple uppers as an alternating telescoping sum and reinterpret this sum as a signed count of $pTs$ objects. We then introduce a more general version of $pTs$ objects and construct an involution on them that gives the required cancellation. One inequality needed to complete this argument is proved separately in the appendix.

\subsection{The cases $k=0$ and $k=1$}

We consider balance on the finer data $(w,h)$, where $w$ is a word with content $X_0$ and $h=\dinv(w)$. We compare the numbers of simple lowers and simple uppers $(A,B)$ with $A:B=w$; each number is either zero or one. We call $w$ \emph{bad} if these numbers differ. Balance on every $(w,h)$ would imply balance on $(X_0,h)$ by summing over words with content $X_0$ and dinv $h$. We retain the assumption $\max(X_0)\ge1$.

For $k=0$, the only bad word is $(1)$, which represents a lower but not an upper. Here $h=0$ and $\beta(\{1\})=0$, so the required inequality $2h>\beta(X_0)$ fails.

Now let $k=1$ and write $w=(x,y,z)$. The word represents a lower precisely when
\[
x\le1,\qquad y\le x+1,\qquad z\le0,
\]
and an upper precisely when
\[
x\le1,\qquad y\le z+1,\qquad z\le0.
\]
Thus a bad word satisfies the common endpoint conditions and exactly one of the two conditions on $y$.

If $\max(X_0)\ge3$, neither representation is possible. If $\max(X_0)=2$, the only bad words are $(1,2,z)$ with $z\le0$. For $z<0$, we have $h=0$ and $\beta(X_0)=-z>0$. For $z=0$, we have $h=1$ and $\beta(X_0)=2$. In either case, $2h>\beta(X_0)$ fails.

It remains to consider $\max(X_0)=1$. Write $X_0=\{x,y,1\}$, where $x\le y\le1$. Suppose first that $y-x\ge2$. Then $x$ differs from both other entries by at least $2$, so no pair involving $x$ contributes to dinv, and hence $h\le1$. Moreover,
\[
\sum X_0+2\mu(X_0)=
\begin{cases}
x+6, & y=1,\\
x+7, & y=0,\\
x+y+9, & y\le-1.
\end{cases}
\]
Each expression is at most $5$, so
\[
2h\le2\le7-\sum X_0-2\mu(X_0)=\beta(X_0).
\]
Therefore no word in this case satisfies the required inequality.

We may now assume $y-x\le1$. If $y<0$, placing $1$ first gives both a lower and an upper, while placing it elsewhere gives neither. Thus no bad word occurs. If $y\ge0$, the remaining contents are
\[
\{-1,0,1\},\qquad
\{0,0,1\},\qquad
\{0,1,1\},\qquad
\{1,1,1\}.
\]
Inspection using the conditions on $x$, $y$, and $z$ shows that the only bad words are
\[
(0,1,-1)
\qquad\text{and}\qquad
(-1,1,0).
\]
The first represents only a lower and the second only an upper. Both have content $X_0=\{-1,0,1\}$ and $h=1$, and both satisfy $2h>\beta(X_0)$, since $\beta(X_0)=1$.

Thus, among the words under consideration satisfying the required inequality, balance on $(w,h)$ fails only at these two words. Their contributions are of opposite types and belong to the same simple key, so they cancel. Consequently, every simple key satisfying $2h>\beta(X_0)$ is balanced for $k=0$ and $k=1$.

\subsection{Telescoping}

Fix a simple key $(X_0,h)$ with $|X_0|=2k+1$ and $k\ge2$.
If it supports neither a simple lower nor a simple upper, it is already
balanced, so assume that at least one exists.

\begin{definition}
A sequence of integers $d_1,\ldots,d_n$ is \emph{dual} if
$d_{i+1}\ge d_i+2$, and \emph{reverse dual} if $d_i\ge d_{i+1}+2$.
\end{definition}

For $0\le i\le k+1$ and $0\le j\le k$, let $\mathrm{Low}_{ij}$ be
the set of pairs
\[
A=(x_1,\ldots,x_{k+1}),\qquad B=(y_{-k},\ldots,y_{-1})
\]
with content $X_0$ and $\dinv(A:B)=h$ such that, putting $x_0=0$
and $y_0=-1$,
\[
\begin{aligned}
x_0,\ldots,x_i&\text{ is dual},
&x_i,\ldots,x_{k+1}&\text{ is affine},\\
y_{-k},\ldots,y_{-j}&\text{ is reverse affine},
&y_{-j},\ldots,y_0&\text{ is reverse dual}.
\end{aligned}
\]
Define $\mathrm{Up}_{ij}$ in the same way, using
$A=(x_1,\ldots,x_k)$ and $B=(y_{-(k+1)},\ldots,y_{-1})$,
for $0\le i\le k$ and $0\le j\le k+1$. By convention, set
\[
\mathrm{Low}_{i,k+1}=\varnothing,
\qquad
\mathrm{Up}_{k+1,j}=\varnothing
\qquad(0\le i,j\le k+1).
\]
Thus $\mathrm{Low}_{00}$ and $\mathrm{Up}_{00}$ are precisely the
simple lowers and simple uppers associated to $(X_0,h)$.

For $0\le i,j\le k$, define
\[
\mathrm{LOW}_{ij}
=\mathrm{Low}_{ij}\cup\mathrm{Low}_{i,j+1}
 \cup\mathrm{Low}_{i+1,j}\cup\mathrm{Low}_{i+1,j+1},
\]
and define $\mathrm{UP}_{ij}$ similarly. These unions are disjoint.

The existence of a simple lower or upper on $(X_0,h)$ gives
$\max(X_0)\le k+1$, since $0:A$ is affine and $B:(-1)$ is reverse
affine. On the other hand, a pair in $\mathrm{Low}_{k+1,0}$ would
have $x_{k+1}\ge2k+2$, and a pair in $\mathrm{Up}_{0,k+1}$ would
have $y_{-(k+1)}\ge2k+1$. Thus $\mathrm{Low}_{k+1,0} =\mathrm{Low}_{0,k+1}=\mathrm{Low}_{k+1,k+1}= \emptyset$
and $\mathrm{Up}_{k+1,0}  = \mathrm{Up}_{0,k+1}  = \mathrm{Up}_{k+1,k+1}=\emptyset$ whence
\[
\begin{aligned}
\sum_{i,j=0}^{k}(-1)^{i+j}|\mathrm{LOW}_{ij}|
&=|\mathrm{Low}_{00}|,\\
\sum_{i,j=0}^{k}(-1)^{i+j}|\mathrm{UP}_{ij}|
&=|\mathrm{Up}_{00}|.
\end{aligned}
\]

Consequently, balance on $(X_0,h)$ is equivalent to
\begin{equation}
\label{eq:telescoping-balance}
\sum_{i,j=0}^{k}(-1)^{i+j}
\bigl(|\mathrm{LOW}_{ij}|-|\mathrm{UP}_{ij}|\bigr)=0.
\end{equation}
In the next subsection, we interpret this sum as a signed count
of $pTs$ objects.

\subsection{\texorpdfstring{$pTs$}{pTs} Objects}

Retain the simple key $(X_0,h)$ and $k\ge2$ from the preceding
subsection. 

\begin{definition}
A \emph{Dyck tableau} is an integer filling of a Young diagram in
English convention whose rows are dual Dyck sequences, read from left to right, and whose
columns are affine Dyck sequences, read from bottom to top \cite[Definition~2.6]{Hawkes26}.
Its \emph{row-reading word} $\mathrm{RR}(T)$ reads these rows left to right from
bottom to top.
\end{definition}
\begin{definition}
A \emph{$pTs$ object} associated to $(X_0,h)$ is a triple $(p,T,s)$
where $0:p$ is dual,
$s:(-1)$ is reverse dual with $|p| \neq |s|$ and $T$ is a Dyck tableau with column lengths 
\[(k-\min(|p|,|s|), k+1-\max(|p|,|s|))
\]
and where the combined content of these objects is $X_0$ and 
$\dinv(p:\mathrm{RR}(T):s)=h$. 
\end{definition}

\begin{lemma}\label{alt}
For a fixed simple key $(X_0,h)$ and fixed $0\leq i,j\leq k$,
\[ |\mathrm{LOW}_{ij}|-|\mathrm{UP}_{ij}| = (-1)^{\mathbf{1}_{\{i<j\}}} \#\{pTs:(|p|,|s|)=(i,j)\}. \]

\end{lemma}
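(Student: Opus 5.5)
The plan is to first identify $\mathrm{LOW}_{ij}$ and $\mathrm{UP}_{ij}$ as products of three pieces, then use a Pieri-type expansion from \cite{Hawkes26} for the middle piece so that almost all terms cancel.

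\textbf{Step 1: untangle the four-fold unions.} A pair in $\mathrm{Low}_{ij}$ has $x_{i+1}\le x_i+1$. A pair in $\mathrm{Low}_{i+1,j}$ has instead $x_{i+1}\ge x_i+2$, and both have $x_{i+1},\ldots,x_{k+1}$ affine. The analogous dichotomy holds for $y_{-j-1}$ against $y_{-j}$. Hence the union $\mathrm{LOW}_{ij}$ is exactly the set of pairs $(A,B)$, with content $X_0$ and $\dinv(A:B)=h$, such that:
\begin{itemize}
\item $p:=(x_1,\ldots,x_i)$ satisfies $0:p$ dual;
\item $\alpha:=(x_{i+1},\ldots,x_{k+1})$ is affine, with no condition linking $x_i$ and $x_{i+1}$;
\item $\rho:=(y_{-k},\ldots,y_{-j-1})$ is reverse affine;
\item $s:=(y_{-j},\ldots,y_{-1})$ satisfies $s:(-1)$ reverse dual.
\end{itemize}
So $|\alpha|=k+1-i$ and $|\rho|=k-j$. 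The same reasoning describes $\mathrm{UP}_{ij}$, now with $|\alpha|=k-i$ and $|\rho|=k+1-j$. The conventions $\mathrm{Low}_{i,k+1}=\varnothing$ and $\mathrm{Up}_{k+1,j}=\varnothing$ are consistent with this: they simply forbid the extra dual step past the end.

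\textbf{Step 2: separate the dinv.} Because $p$ lies entirely to the left of $\alpha:\rho$ and $s$ entirely to its right, we have
\[
\dinv(p:\alpha:\rho:s)=\dinv(p)+\dinv(s)+\dinv(\alpha:\rho)+C(p,\alpha\rho)+C(\alpha\rho,s)+C(p,s).
\]
Every term except $\dinv(\alpha:\rho)$ depends only on $p$, $s$ and the content of $\alpha:\rho$. Therefore it suffices to fix $p$, $s$, the middle content $Y$ and the middle dinv $h'$. We then compare the number of (affine, reverse affine) pairs of lengths $(k+1-i,k-j)$ with the number of lengths $(k-i,k+1-j)$, both with content $Y$ and $\dinv(\alpha:\rho)=h'$.

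\textbf{Step 3 (main step): apply the dual Dyck insertion of \cite{Hawkes26}.} I would take from that paper the Pieri-type statement: pairs $(\alpha,\rho)$ with $\alpha$ affine of length $a$, $\rho$ reverse affine of length $b$, fixed content and fixed $\dinv(\alpha:\rho)$ are in bijection with Dyck tableaux whose row-reading word has that content and dinv, and whose two columns have lengths $(a+b-r,r)$ for $0\le r\le\min(a,b)$. This is the analogue of $h_ah_b=\sum_r s_{(a+b-r,r)}$. If the reference only gives it with both factors affine, I would first convert using Proposition~4.16 of \cite{Hawkes26}, as was done in Section~2. I expect locating or adapting this statement to be the main obstacle, specifically checking that the dinv bookkeeping of the tableau reading word matches $\dinv(\alpha:\rho)$.

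Given this, both sides have total middle length $2k+1-i-j$. The difference of counts is the contribution of the shapes with $r$ between the two minima:
\begin{itemize}
\item If $i=j$, both minima equal $k-i$, so the difference is zero.
\item If $i>j$, the minima are $k+1-i$ (lower) and $k-i$ (upper). The difference is the positive count of tableaux with columns $(k-j,k+1-i)$.
\item If $i<j$, the minima are $k-j$ (lower) and $k+1-j$ (upper). The difference is the negative count of tableaux with columns $(k-i,k+1-j)$.
\end{itemize}
In both nonzero cases the shape is $(k-\min(i,j),\,k+1-\max(i,j))$, exactly as in the definition of a $pTs$ object.

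\textbf{Step 4: reassemble.} Summing over $p$, $s$, $Y$ and $h'$, with $\dinv(p:\mathrm{RR}(T):s)=h$ recovered through the decomposition of Step 2, turns the difference into the signed count $(-1)^{\mathbf 1_{\{i<j\}}}\#\{pTs:(|p|,|s|)=(i,j)\}$. The requirement $|p|\ne|s|$ in the definition accounts for the vanishing case $i=j$.
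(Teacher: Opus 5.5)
Your proposal is correct and follows the paper's proof essentially step for step. Both fix $p,s$, noting that the four-fold unions remove the junction conditions. Both reduce to the middle content $Y$ and dinv $h'$. Both convert the reverse affine factor via Proposition~4.16 and then apply Corollaries~3.19--3.20 of Hawkes26. Your Pieri-type count, shapes with second column $r\le\min(a,b)$, is the same as the paper's count of reverse semistandard fillings of weight $(a,b)$: such a filling exists, uniquely, exactly when there are at least $\max(a,b)$ rows. So the single surviving shape and its sign agree with the paper's.
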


\begin{proof}
We fix $p$ of length $i$ and $s$ of length $j$ such that  $0:p$ is dual and $s:(-1)$ is reverse dual and show that the contribution to both sides is the same. 
The contribution of $(p, s)$ to $|\mathrm{LOW}_{ij}|-|\mathrm{UP}_{ij}|$ we take to mean the number $L$, of ways to 
partition $X_0$ as $(p:x,y:s)$ where $\dinv(p:x:y:s)=h$, $|p:x|=k+1$, $|y:s|=k$, $x$ is affine and $y$ is reverse affine
 minus the 
number, $U$, of ways to 
partition $X_0$ as $(p:x',y':s)$ where $\dinv(p:x':y':s)=h$, $|p:x'|=k$, $|y':s|=k+1$, $x'$ is affine and $y'$ is reverse affine
Note that, by definition of $\mathrm{LOW}_{ij}$ (resp. $\mathrm{UP}_{ij}$), there is no condition on the boundaries between $p$ and $x$ (resp. $x'$) nor between $y$ (resp. $y'$) and $s$
 so these elements belong in $\mathrm{LOW}_{ij}$ and $\mathrm{UP}_{ij}$ respectively (and each element of one of these is associated to some $(p, s)$).

By Proposition~4.16 of \cite{Hawkes26}, we may in fact assume that the reverse affine factors are affine. 
Next note that if $i=j$ then the difference is $0$
by ~3.20 of \cite{Hawkes26} and there are, by definition no $pTs$ objects. Put
\[
Y=X_0\setminus p\setminus s,
\qquad
h'=h-\dinv(p)-\dinv(s)-C(p,Y)-C(Y,s)-C(p,s).
\]

Now if $i \neq j$ then by corollaries~3.19 and~3.20 of \cite{Hawkes26}
 $L$ (resp. $U$) is equal to the number of pairs of a Dyck
tableau and a reverse semistandard tableau of the same shape where the first  has content $Y$ and $\dinv$ $h'$ and the latter
has weight $(k+1-i,k-j)$ (resp. $(k-i,k+1-j)$).  Thus for each Dyck tableau of content $Y$ and dinv $h'$ we must count the difference between
the number of ways to fill its shape in a reverse semistandard way with 
weight $(k+1-i,k-j)$ versus weight $(k-i,k+1-j)$.

Now suppose that $i<j$ (resp. $i>j$) and consider two-column partitions of $|Y|$.
Any two column shape with more than $k-i$ (resp. $k-j$) rows can be filled exactly one way with either weight.
Any two column shape with fewer than $k-i$ (resp. $k-j$) rows can be filled in no ways by either weight. 
A two column shape with $k-i$ (resp. $k-j$) rows can be filled in one way by the second (resp. first) and in no way by the first (resp. second). 
Thus the difference is equal to the number of Dyck tableaux of shape  $(k-i,k+1-j)$ (resp. $(k-j,k+1-i)$) with content $Y$ and dinv $h'$ with negative (resp. positive) sign. 
This is precisely the count of all $pTs$ objects with our fixed $p$ and $s$ counted with sign equal to $(-1)^{\mathbf{1}_{\{|p|<|s|\}}}$

\end{proof}

\begin{corollary}
\label{lem:pts-signed-count}
For the fixed simple key $(X_0,h)$,
\begin{equation}
\label{eq:pts-count}
\sum_{i,j=0}^{k}(-1)^{i+j}
\bigl(|\mathrm{LOW}_{ij}|-|\mathrm{UP}_{ij}|\bigr)
=
\sum_{(p,T,s)}(-1)^{|p|+|s|}(-1)^{\mathbf{1}_{\{|p|<|s|\}}},
\end{equation}
where the sum on the right is over all $pTs$ objects associated to
$(X_0,h)$. If the $pTs$ objects on $(X_0,h)$ are balanced, then the key is balanced.
\end{corollary}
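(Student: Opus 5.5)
The corollary follows by summing Lemma~\ref{alt} over the grid $0\le i,j\le k$ and then reinterpreting the result. First I multiply the identity of Lemma~\ref{alt} by $(-1)^{i+j}$ and sum over all $0\le i,j\le k$. The left side becomes exactly the left side of \eqref{eq:pts-count}. The right side becomes
\[
\sum_{i,j=0}^{k}(-1)^{i+j}(-1)^{\mathbf{1}_{\{i<j\}}}\,\#\{(p,T,s):(|p|,|s|)=(i,j)\}.
\]
I then regroup this double sum as a single sum over $pTs$ objects, attaching to each object the sign $(-1)^{|p|+|s|}(-1)^{\mathbf{1}_{\{|p|<|s|\}}}$.

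The one point needing care is that every $pTs$ object associated to $(X_0,h)$ has $|p|,|s|\le k$, so that the regrouped sum over $i,j\le k$ really covers all of them. This comes from the column lengths $(k-\min(|p|,|s|),\,k+1-\max(|p|,|s|))$ in the definition. A column length must be nonnegative, so the first gives $\min(|p|,|s|)\le k$. For the second, $|p|\ne|s|$ forces $\max(|p|,|s|)\ge 1$. If $\max(|p|,|s|)=k+1$, the second column has length $0$. I would either read the definition as requiring both lengths to be meaningful, or observe that when $\max(|p|,|s|)=k+1$ the dual condition forces an entry of size at least $2k+2$ in $p$, or at most $-2k-2$ in $s$. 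The first contradicts $\max(X_0)\le k+1$, as in the telescoping subsection. In the second case I would argue these objects are absent as the $\mathrm{Low}_{i,k+1}$ and $\mathrm{Up}_{k+1,j}$ terms were. Pairs with $i=j$ contribute zero on both sides, since Lemma~\ref{alt} gives difference zero there and no $pTs$ objects have $|p|=|s|$.

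For the final sentence, I combine \eqref{eq:pts-count} with \eqref{eq:telescoping-balance}. If the $pTs$ objects are balanced, meaning their signed count on the right of \eqref{eq:pts-count} vanishes, then the left side vanishes. By \eqref{eq:telescoping-balance} this is equivalent to $|\mathrm{Low}_{00}|=|\mathrm{Up}_{00}|$, which is balance of the simple key.

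The main (mild) obstacle is the index-range bookkeeping for $\max(|p|,|s|)=k+1$. Everything else is direct summation.
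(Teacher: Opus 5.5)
Your argument is the paper's: multiply Lemma~\ref{alt} by $(-1)^{i+j}$, sum over $0\le i,j\le k$, regroup the right side by objects, and then invoke \eqref{eq:telescoping-balance}. The paper's proof does not check that every $pTs$ object has $\max(|p|,|s|)\le k$, so your extra bookkeeping is worthwhile. However, the $|s|=k+1$ case is wrong as written.

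Since $s:(-1)$ is \emph{reverse dual}, each entry exceeds the next by at least $2$. Moving left from the appended $-1$, the entries therefore grow: $s_{-1}\ge1$, $s_{-2}\ge3$, and in general $s_{-(k+1)}\ge 2k+1$. The entries are not forced down to $\le -2k-2$. Because $k\ge2$, this already exceeds $\max(X_0)\le k+1$, exactly as in the exclusion of $\mathrm{Up}_{0,k+1}$ in Section~3.2, so the case is immediate.

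The analogy you propose with $\mathrm{Low}_{i,k+1}$ and $\mathrm{Up}_{k+1,j}$ would not supply an argument anyway. Those sets are empty by convention, for length reasons, not because of a content bound.

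You also should not reinterpret the definition to forbid an empty second column. The proof of Lemma~\ref{lem:adjacent-pts-balance} explicitly considers $|s|=k+1$ with column~2 empty. For the $00$ family, the content bound alone rules these objects out.

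Finally, say explicitly that both exclusions ($|p|=k+1$ and $|s|=k+1$) rest on the standing assumption that the key supports a simple lower or upper. That assumption is what gives $\max(X_0)\le k+1$, and it is also what makes \eqref{eq:telescoping-balance} valid in your last step.
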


\begin{proof}
The first statement follows from Lemma \ref{alt}. By \eqref{eq:telescoping-balance} and the equation above
the simple key $(X_0,h)$ is balanced if its $pTs$ objects
are balanced.
\end{proof}

\subsection{An Involution}

Retain the simple key $(X_0,h)$ and $k\ge2$. As in the preceding
subsections, assume that the key supports a simple lower or upper,
so $\max(X_0)\le k+1$.

\begin{definition}
A \emph{$uv$-$pTs$ object} (resp.  \emph{flipped $uv$-$pTs$ object}) associated to $(X_0,h)$ is a triple $(p,T,s)$
where 
$0,p_1,\ldots, p_u$ (resp. $(-1),p_1,\ldots, p_u$) is affine,
$p_u,\ldots,p_{|p|}$ is dual,
$s_{-|s|},\ldots,s_{-v}$ is reverse dual,
$s_{-v},\ldots,s_{-1},(-1)$ (resp. $s_{-v},\ldots,s_{-1},0$) is reverse affine,
where  $|p| \neq |s|$ and $T$ is a Dyck tableau with column lengths 
\[(k-\min(|p|,|s|), k+1-\max(|p|,|s|))
\]
and where the combined content of these objects is $X_0$ and 
$\dinv(p:\mathrm{RR}(T):s)=h$ 
\end{definition}
In particular, $00$-$pTs$ objects are precisely the
$pTs$ objects of the preceding subsection,
while flipped $uv$-$pTs$ objects just flip the endpoint requirements.

For a multiset $Y$ and an integer $h$, write
\[
 h_Y^*:=C(Y,Y)-|Y|-h
\]
for the complementary $\dinv$ parameter. (If a sequence $S$ on $Y$ has $\dinv$ $h$ then read backwards it has $\dinv$ $h_Y^*$.)

\begin{lemma}
\label{lem:tableau-reversal}
For any shape $\lambda$, content $Y$, and $\dinv$ $h$, the number
of Dyck tableaux of shape $\lambda$, content $Y$, and $\dinv$ $h$
equals the number of rotated Dyck tableaux with the same content
and $\dinv$. The latter have shape $\lambda$ rotated by $180^\circ$,
rows that are reverse dual when read from left to right, and
columns that are reverse affine when read from bottom to top.
Their row-reading words are still read from left to right,
bottom to top.
\end{lemma}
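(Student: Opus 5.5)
The plan is to build the bijection in two stages. The first stage is a direct geometric map, which matches shape and content but replaces $\dinv$ $h$ by the complementary parameter $h_Y^*$. The second stage recovers the statement at $\dinv$ $h$ itself, using the factorization machinery of \cite{Hawkes26} together with a triangularity argument over shapes.

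\emph{Stage 1: rotation.} Given a Dyck tableau $T$ of shape $\lambda$, rotate it by $180^\circ$. A row that was dual read left to right is reverse dual read left to right after rotation. A column that was affine read bottom to top is reverse affine read bottom to top after rotation. The content is unchanged, so this gives a bijection onto rotated Dyck tableaux of the same content.

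The reading word behaves as follows. Reading the rotated tableau left to right and bottom to top first reads the original top row reversed, then the next row reversed, and so on. The result is exactly the reverse of $\mathrm{RR}(T)$. By the remark defining $h_Y^*$, the rotated tableau therefore has $\dinv$ $h_Y^*$ rather than $h$. So rotation alone proves the lemma with $h$ replaced by $h_Y^*$ on one side. What remains is a symmetry between rotated tableaux of $\dinv$ $h$ and Dyck tableaux of $\dinv$ $h$ that does not go through word reversal.

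\emph{Stage 2: factorization counts.} For the remaining symmetry I would use Corollaries~3.19 and~3.20 of \cite{Hawkes26}. These identify the number of words with content $Y$ and $\dinv$ $h$ that factor into affine pieces of prescribed lengths $\mu$ with $\sum_\lambda N_\lambda(Y,h)\,K'_{\lambda\mu}$. Here $N_\lambda(Y,h)$ counts Dyck tableaux of shape $\lambda$, and $K'_{\lambda\mu}$ counts reverse semistandard fillings of shape $\lambda$ and weight $\mu$.

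I would prove the analogue of this identity for rotated Dyck tableaux and factorizations into reverse affine pieces. The intended route is to run the same insertion with all inequalities mirrored, which leaves the dinv bookkeeping on the unreversed word unchanged. Proposition~4.16 of \cite{Hawkes26} then says that, at fixed content and $\dinv$, reverse affine factors may be replaced by affine ones. Hence the two left-hand factorization counts agree for every $\mu$.

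The matrix $(K'_{\lambda\mu})$ is unitriangular with respect to dominance on shapes; for two-column shapes this is visible directly from the filling counts used in Lemma~\ref{alt}. Inverting it then gives $N_\lambda(Y,h)=N^{\mathrm{rot}}_\lambda(Y,h)$ shape by shape. If one wants an explicit bijection rather than an equality of numbers, I would compose the bijections above with the Garsia--Milne involution principle to cancel the triangular correction terms.

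\emph{Main obstacle.} The hardest step is the mirrored version of Corollaries~3.19 and~3.20. One must check that dual Dyck insertion, run on reverse affine factors, produces exactly rotated Dyck tableaux, meaning rows reverse dual and columns reverse affine. One must also check that it tracks $\dinv$ of the unreversed reading word. I would first try to obtain this formally by conjugating the original insertion with rotation and word reversal, then compensating the resulting $h\mapsto h_Y^*$ shift by applying Proposition~4.16 at the level of words.
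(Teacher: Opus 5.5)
Your argument is correct, but it reaches the key identity $N_\lambda(Y,h)=N_\lambda(Y,h_Y^*)$ through a different symmetry than the paper uses.

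\textbf{The paper's route.} The paper works with \emph{dual} Dyck factorizations. It uses the composition-reversal symmetry of \cite[Corollary~3.19]{Hawkes26}. It then reverses each dual factor individually, which costs nothing: a dual factor has no internal $\dinv$, and cross-factor terms depend only on content. Finally it reverses the whole word. So dual factorization counts at $h$ and $h_Y^*$ agree for every composition, the Schur coefficients are compared, and rotation enters only at the very end.

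\textbf{Your route.} You work with \emph{affine} factorizations. Word reversal turns reverse affine factorizations of lengths $\mu$ at $\dinv$ $h$ into affine ones of lengths $\mu^{\mathrm{rev}}$ at $h_Y^*$. Proposition~4.16, applied factor by factor, identifies the same reverse affine factorizations with affine ones of lengths $\mu$ at $h$. Kostka inversion then finishes.

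\textbf{Two points to make explicit.}
\begin{enumerate}
\item Under your conjugation the recording weight becomes $\mu^{\mathrm{rev}}$, so you need $K'_{\lambda\mu^{\mathrm{rev}}}=K'_{\lambda\mu}$. This is standard symmetry of Schur functions.
\item The $h\mapsto h_Y^*$ shift caused by word reversal is undone by rotating the tableau, not by Proposition~4.16. The proposition's actual role is to equate the reverse affine and affine factorization counts, as your Stage~2 paragraph correctly says, so the ``compensating'' phrase in your last paragraph mislabels it.
\end{enumerate}

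\textbf{Trade-off.} The paper's route is leaner: it needs only Corollary~3.19 and the trivial behaviour of dual factors. Yours also requires the nontrivial bijection $\mathrm{fw}$ and the affine insertion identity. In exchange, your route is essentially the counting shadow, for arbitrary shapes, of what the paper later does constructively for two-column shapes. The complement map $\chi_Y$ of Lemma~\ref{lem:dyck-rotation} is built from $R(A,B)=(B^{\mathrm{rev}},A^{\mathrm{rev}})$, $\mathrm{fw}$, and finite cancellation over row counts. So your approach lends itself directly to the bijective upgrade you mention.
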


\begin{proof}
Fix a composition $\alpha$ of $|Y|$ and consider the dual Dyck
factorizations of $Y$ with factor lengths $\alpha$ and $\dinv$ $h$.
By symmetry \cite[Corollary~3.19]{Hawkes26}, there are equally many
such factorizations with factor lengths $\alpha^{\mathrm{rev}}$
and $\dinv$ $h$. Reversing each individual factor gives the same
number of reverse dual Dyck factorizations with factor lengths
$\alpha^{\mathrm{rev}}$ and $\dinv$ $h$. Reversing the entire
configuration then gives the same number of dual Dyck
factorizations with factor lengths $\alpha$ and $\dinv$ $h_Y^*$.

Thus the dual Dyck symmetric functions associated to $Y$ and
$\dinv$ values $h$ and $h_Y^*$ are equal, so their Schur
expansions agree. By \cite[Corollary~3.19]{Hawkes26}, for each
$\lambda$ there are equally many Dyck tableaux of shape $\lambda$
and content $Y$ with $\dinv$ $h$ as with $\dinv$ $h_Y^*$.
Rotating the latter by $180^\circ$ gives a bijection with the
rotated Dyck tableaux of the lemma.
\end{proof}

\begin{lemma}
\label{lem:flipped-pts-reduction}
Fix $(X_0,h)$ 
Let $\mathcal{P}_{uv}$ be the set of $uv$-$pTs$ objects
associated to $(X_0,h)$, and let $\widetilde{\mathcal{P}}_{vu}$
be the set of flipped $vu$-$pTs$ objects associated to
$(X_0,h_{X_0}^*)$. Then
\begin{equation}
\label{eq:flipped-pts-count}
 \sum_{(p,T,s)\in\widetilde{\mathcal{P}}_{vu}}
 (-1)^{|p|+|s|+\mathbf{1}_{\{|p|<|s|\}}}
 =
 -\sum_{(p,T,s)\in\mathcal{P}_{uv}}
 (-1)^{|p|+|s|+\mathbf{1}_{\{|p|<|s|\}}}.
\end{equation}
\end{lemma}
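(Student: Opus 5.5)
The plan is to build an explicit bijection between $\mathcal{P}_{uv}$ (for $(X_0,h)$) and $\widetilde{\mathcal{P}}_{vu}$ (for $(X_0,h_{X_0}^*)$) by reversing the whole configuration. Reading $p:\mathrm{RR}(T):s$ backwards should exchange the roles of $p$ and $s$, turn the tableau into a rotated tableau, and send $\dinv$ $h$ to $h_{X_0}^*$. Lemma~\ref{lem:tableau-reversal} then converts the rotated tableau back into an honest Dyck tableau of the required shape. The sign change will come from the indicator $\mathbf{1}_{\{|p|<|s|\}}$, which flips because the lengths of $p$ and $s$ are swapped and $|p|\neq|s|$. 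The parity $|p|+|s|$ is unchanged.

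\textbf{Step 1 (reversal of the end pieces).} Given $(p,T,s)\in\mathcal{P}_{uv}$, set $p'=\mathrm{rev}(s)$ and $s'=\mathrm{rev}(p)$. I will check that the conditions transform correctly.
\begin{itemize}
\item Since $s_{-|s|},\ldots,s_{-v}$ is reverse dual, its reversal is dual.
\item Since $s_{-v},\ldots,s_{-1},(-1)$ is reverse affine, the reversed sequence $(-1),s_{-1},\ldots,s_{-v}$ is affine, because reversing interchanges the two affine conditions $d_{i+1}\le d_i+1$ and $d_i\le d_{i+1}+1$.
\item Symmetrically, $0,p_1,\ldots,p_u$ affine becomes $p_u,\ldots,p_1,0$ reverse affine, and the dual tail becomes reverse dual.
\end{itemize}
Hence $(p',s')$ satisfies exactly the flipped $vu$ conditions, with the endpoints $-1$ and $0$ interchanged and the indices $u,v$ swapped. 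Moreover $\min(|p'|,|s'|)$ and $\max(|p'|,|s'|)$ agree with those of $(p,s)$, so the required column lengths are the same.

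\textbf{Step 2 (the middle tableau).} Reversing the full word gives
\[
\mathrm{rev}(s)\,:\,\mathrm{rev}(\mathrm{RR}(T))\,:\,\mathrm{rev}(p).
\]
Here $\mathrm{rev}(\mathrm{RR}(T))$ is the row-reading word of $T$ rotated by $180^\circ$, which is a rotated Dyck tableau in the sense of Lemma~\ref{lem:tableau-reversal}. Its total $\dinv$ is $h_{X_0}^*$, since reversing a word of content $X_0$ and $\dinv$ $h$ gives $\dinv$ $h_{X_0}^*$.

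The difficulty is that the middle must be an ordinary Dyck tableau, not a rotated one. I will decompose the total $\dinv$ as
\[
\dinv(p)+\dinv(s)+\text{cross terms}+\dinv(\text{middle}).
\]
The cross terms depend only on the contents of $p$, $s$ and $Y=X_0\setminus p\setminus s$, together with the relative order of the blocks. Because of this, I do not need a bijection on tableaux: an equality of counts is enough. For fixed $(p,s)$, the number of rotated Dyck tableaux of content $Y$, shape rotated $\lambda$ and $\dinv$ $g$ equals the number of Dyck tableaux of shape $\lambda$, content $Y$ and $\dinv$ $g$, by Lemma~\ref{lem:tableau-reversal}. The shape $\lambda$ is a two-column shape with the same column lengths as before. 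Summing over $(p,s)$ then gives equal cardinalities of the corresponding fibres, with opposite signs. This proves \eqref{eq:flipped-pts-count}.

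\textbf{Step 3 (the main obstacle: $\dinv$ bookkeeping).} The delicate point is verifying that the fixed-$(p,s)$ target $\dinv$ for the middle agrees on both sides. On the reversed side the middle must have $\dinv$ equal to
\[
h_{X_0}^*-\dinv(p')-\dinv(s')-\text{cross}',
\]
and this must equal the $\dinv$ of the rotated word, namely $h'^{*}_Y$, where $h'$ is the original middle $\dinv$.

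I will verify this by writing
\[
h_{X_0}^*=C(X_0,X_0)-|X_0|-h
\]
and splitting $C(X_0,X_0)$ into the within-block parts $C(p,p)$, $C(Y,Y)$, $C(s,s)$ and the cross-block parts $C(p,Y)$, $C(Y,s)$, $C(p,s)$. Reversing a word exchanges each cross count $C(U,V)$ with $C(V,U)$. The identity then reduces to the fact that for every pair of blocks, the pairs $(u,w)$ with $u-w\in\{0,1\}$ and with $w-u\in\{0,1\}$ together account for $C(U,V)+C(V,U)$. I expect this to be a routine but careful computation. If it fails, the fallback is to apply Lemma~\ref{lem:tableau-reversal} to the whole word, using $\mathrm{RR}$ additivity directly.
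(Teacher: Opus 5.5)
Your proposal is correct and is essentially the paper's argument. Rotate the whole configuration by $180^\circ$, which swaps and reverses $p$ and $s$, exchanges the endpoint conditions, preserves $|p|+|s|$, and flips $\mathbf{1}_{\{|p|<|s|\}}$; then apply Lemma~\ref{lem:tableau-reversal} fiberwise over the outer sequences to trade rotated Dyck tableaux for ordinary ones of the same content, shape and $\dinv$. The bookkeeping you flag in Step~3 needs no separate check: reversing the entire word already sends $h$ to $h_{X_0}^*$, and the reversed middle is exactly the reading word of the rotated tableau.
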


\begin{proof}

By Lemma~\ref{lem:tableau-reversal} we can replace $\widetilde{\mathcal{P}}_{vu}$ with 
$\widetilde{\mathcal{P}}_{vu}^{\circ}$ whose elements differ from those of the former only in that we
replace the Dyck tableau with rotated Dyck tableau in the definition.  But there is direct bijection from  
$\mathcal{P}_{uv}$ to $\widetilde{\mathcal{P}}_{vu}^{\circ}$ given by rotating the entire configuration to $180^{\circ}$.  
This exactly reverses the reading word so it sends $h$ to $h_{X_0}^*$.  The sum of the lengths of the two sequences
is maintained but the side which contains the longer of the two switches so two summations are opposite.

\end{proof}

Our goal is to show that whenever the $22$-$pTs$ objects on $(X_0,h)$ are balanced, so are the
$pTs$ objects. It is possible to do this by showing that the $uv$-$pTs$ and $u'v'$-$pTs$
objects associated to $(X_0,h)$ are balanced for
\[
 (u,v,u',v')\in
 \{(0,0,0,1),(0,1,1,1),(1,1,1,2),(1,2,2,2)\}.
\]
By the lemma above, the lemma below is equivalent to doing this.

\begin{lemma}
\label{lem:adjacent-pts-balance}
For any $Y$, $h$.
For $(u,v)=(0,0),(1,1)$ the union of $uv$-$pTs$ and $u(v+1)$-$pTs$ objects is balanced over $X_0,h$.
For $(u,v)=(1,0),(2,1)$ the union of the flipped $uv$-$pTs$ and flipped $u(v+1)$-$pTs$ objects is balanced over $(X_0,h_{X_0}^*)$.

\end{lemma}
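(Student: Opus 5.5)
The plan is to build, for each of the four cases, a sign-reversing involution on the union of the two object classes. Its fixed points will either carry signs that cancel in pairs or will not exist. The natural move is to transfer a single boundary element between $p$ (or $s$) and the tableau $T$.

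Take the case $(u,v)=(0,0)$ first; the other three are analogous. A $00$-$pTs$ object and a $01$-$pTs$ object differ only in the condition at the junction $s_{-1},(-1)$. For a $00$-object we need $s_{-1}\ge 1$ (reverse dual ending at $-1$). For a $01$-object we need $s_{-1}\le 0$, but $s_{-2}\ge s_{-1}+2$ is still required. So the objects with $s_{-1}\ge1$, $s_{-1}\le0$, and the ``both'' overlap are sorted by the value of $s_{-1}$ relative to $-1$. On the overlap I will define the involution by moving $s_{-1}$ between $s$ and the tableau, changing $|s|$ by one. This flips $(-1)^{|p|+|s|}$. The indicator $\mathbf 1_{\{|p|<|s|\}}$ is unchanged unless $|s|$ crosses $|p|$, and those crossings will be handled separately. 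Moving an element changes the column lengths $(k-\min,k+1-\max)$ by one in exactly one column. So the element removed from $s$ must be inserted into the appropriate column of $T$ as a new top or bottom entry, and the dual/affine conditions of the Dyck tableau must be checked.

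The key steps are as follows.

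\textbf{Step 1.} Verify that $\dinv(p:\mathrm{RR}(T):s)$ is preserved. If the moved element is placed at the right end of the row-reading word, the reading word is literally unchanged when $s_{-1}$ becomes the last letter of $\mathrm{RR}(T)$. This is the choice I would make: $s_{-1}$ goes to the end of the top row of $T$. It requires the top row to remain dual, i.e.\ $s_{-1}\ge(\text{last entry})+2$, and the column conditions to remain affine.

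\textbf{Step 2.} When this insertion is not legal, or the shape would not be a two-column shape of the prescribed lengths, use the reverse move instead: pull the last entry of $\mathrm{RR}(T)$ into $s$. Then show that exactly one of the two moves applies to every object outside a small exceptional set.

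\textbf{Step 3.} Identify the exceptional objects. These include the cases where $|s|=|p|\pm1$ makes the sign indicator jump, and the cases where $T$ is empty in a column. Show that the exceptional objects either do not exist, using the bound $\max(X_0)\le k+1$ and $k\ge2$, or pair up with opposite signs.

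For the flipped cases $(1,0),(2,1)$ the same construction works with the roles of $p$ and $s$ exchanged and the endpoint values $0,-1$ swapped. By Lemma~\ref{lem:flipped-pts-reduction} these could even be deduced from the unflipped statements applied to $h^*_{X_0}$, but it seems cleaner to run the involution directly.

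The main obstacle is Step 2. The issue is compatibility of the Dyck tableau conditions under adding or removing a corner cell, since columns must be affine read bottom to top. If a direct corner move fails, I would invoke the dual Dyck insertion of \cite{Hawkes26} (the RSK-type correspondence behind Corollary~3.19). That correspondence identifies two-column Dyck tableaux with dual Dyck factorizations of two factors. The move then becomes transferring an element between a boundary factor and $s$, where the affine/dual conditions are explicit and the case analysis reduces to a comparison of $s_{-1}$ with neighbouring values.
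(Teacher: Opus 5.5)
There is a genuine gap, and it is in Step 1, which carries the whole construction. The shape of $T$ is forced: its column lengths are $(k-\min(|p|,|s|),\,k+1-\max(|p|,|s|))$. So when $|s|$ drops by one, the new cell must be the bottom cell of column $2$ (if $|p|<|s|$) or a new one-cell bottom row (if $|p|>|s|$). The right end of the top row is already occupied as soon as column $2$ is nonempty, so appending $s_{-1}$ there creates a third column. In a legal cell, the moved entry is read before every entry of the rows above it; in the second case it is read before all of $\mathrm{RR}(T)$. Hence $\dinv$ changes by an amount you do not control. Moreover, nothing about $s_{-1}$ forces the dual-row and affine-column conditions at that cell. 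As a result, Step 2's claim that exactly one of two single-cell moves applies has no basis. Step 3 gives no mechanism for the objects with $|s|=|p|\pm1$, where a unit step would produce the forbidden $|s|=|p|$. The fallback to insertion in your last paragraph is too vague to check.

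The missing idea is that there is no ``overlap'' to work on. For nonempty $s$, the $00$ and $01$ families are disjoint ($s_{-1}\ge1$ versus $s_{-1}\le0$), and likewise $11$ and $12$. Their union is exactly the set of triples whose active part of $s$ (all but the last $v$ entries) is reverse dual with \emph{no} condition at its right junction. This is what lets the paper reverse the entire active part, a dual word, and insert it into $T$ by dual Dyck insertion. That insertion preserves $\dinv$, since interactions with $p$ and the fixed suffix depend only on content, and it adds a horizontal strip. One then marks a different horizontal strip and reverse inserts it. The new strip has one cell more or fewer, or, in certain rectangular configurations, two cells fewer or more, so that which of $p,s$ is longer switches. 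The conditions $k\ge2$ and $v\le1$ ensure the needed cell exists.

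Your remark on the flipped cases is also off. Lemma~\ref{lem:flipped-pts-reduction} converts the flipped $10\cup11$ families at $h^*_{X_0}$ into the unflipped $01\cup11$ families at $h$. That is a $p$-side transition not implied by the unflipped statements, which is exactly why the flipped statements are needed. Within each flipped union it is still $v$ that varies. So the paper uses the same $s$-side construction, with endpoint $0$ in place of $-1$, not an exchange of $p$ and $s$.
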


\begin{proof}
We will use dual Dyck insertion and reverse insertion as in the proof of
\cite[Theorem~3.18]{Hawkes26} to describe a sign changing involution on  
each union.  Leave $p$ and the last $v$ entries of $s$
fixed.  Reverse the remaining part of $s$ and insert it into $T$.
Mark boxes in the
enlarged tableau by the following rule. 

\emph{Case $|p|<|s|$.} 
\begin{enumerate}
\item
If $T$ was rectangular (or empty) and a box was added to
both columns $1$ and $2$, mark only the added boxes to the right of
column $2$.
\item Otherwise, mark the bottom box of column 2 if and only if no box was
added to column two along with all other added boxes outside columns 2.
\end{enumerate}

\emph{Case $|p|>|s|$.} 
\begin{enumerate}
\item
If $T$ was rectangular (or empty) and no box was added to
either column $1$ or column $2$, mark both bottom boxes in column 1 and 2 along with
all the added boxes.
\item Otherwise, mark the bottom box of column 1 if and only if
no box was added to column 1 along with all other added boxes outside
of column $1$. 
\end{enumerate}

Reverse insert from the marked boxes. Reverse the recovered sequence
and place it before the fixed suffix to form the new $s$; the
remaining tableau is the new $T$.

The only possible failure in the $|p|<|s|$ case is when there is no box in the
second column after the insertion. Since $|p|<|s|$, there were $k+1-|s|$ (and also $0$) entries
in column 2 before insertion.
Thus $|s|=k+1 \geq 3$ and since $v \leq 1$, at least two entries were inserted
contradicting the fact column 2 was empty after insertion.  

The only possible failure in the $|p|>|s|$ case is when there are no boxes at all 
after insertion.  In this case $|s|=k \geq 2$ and since $v \leq 1$ there was at 
least one entry inserted contradicting there being no boxes. 

Next, the marked boxes form a horizontal strip, and their removal leaves
the required two-column shape. The recovered part of $s$ is reverse
dual and the fixed part remains reverse affine; no condition on their juncture is required.
Content and $\dinv$ are preserved by insertion and reverse insertion within the 
active parts and so also on the whole objects.

Except in the two rectangular cases, $|s|$ changes by $1$ and the
same factor remains longer. In the rectangular cases, $|s|$ changes
by $2$ and the longer factor changes. In either case the sign is
reversed. Reinserting the recovered sequence gives the same
enlarged tableau, and the marking rule then selects the original
insertion strip. Hence the map is a sign changing involution, proving balance
of the union.

\end{proof}

\begin{corollary}\label{22}
For a simple key $(X_0,h)$ if the $22$-$pTs$ objects are balanced then the key is balanced.
\end{corollary}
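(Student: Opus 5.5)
We prove Corollary~\ref{22} by chaining the signed counts along
\[
(0,0)\to(0,1)\to(1,1)\to(1,2)\to(2,2).
\]
Write
\[
\sigma_{uv}(X_0,h)=\sum_{(p,T,s)\in\mathcal P_{uv}}(-1)^{|p|+|s|+\mathbf{1}_{\{|p|<|s|\}}}
\]
for the signed count of $uv$-$pTs$ objects on $(X_0,h)$. Let $\widetilde\sigma_{uv}$ denote the analogous signed count of flipped $uv$-$pTs$ objects on $(X_0,h^*_{X_0})$. By Corollary~\ref{lem:pts-signed-count}, the key is balanced as soon as $\sigma_{00}(X_0,h)=0$. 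We will show that
\[
\sigma_{00}=-\sigma_{01}=\sigma_{11}=-\sigma_{12}=\sigma_{22},
\]
from which the corollary follows immediately.

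First we justify the reading of ``balanced'' for a union in Lemma~\ref{lem:adjacent-pts-balance}. The involution there fixes $p$ and the last $v$ entries of $s$. It reinserts the rest of $s$, so it sends a $uv$-object to either a $uv$-object or a $u(v+1)$-object, or conversely, and it reverses the sign. The union of the two sets is therefore disjoint, and its total signed count vanishes. Hence $\sigma_{uv}+\sigma_{u(v+1)}=0$ for $(u,v)=(0,0),(1,1)$. This gives
\[
\sigma_{00}=-\sigma_{01}\qquad\text{and}\qquad\sigma_{11}=-\sigma_{12}.
\]

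For the two steps that change $u$, we use the flipped objects. Applying the second half of Lemma~\ref{lem:adjacent-pts-balance} with $(u,v)=(1,0)$ gives $\widetilde\sigma_{10}+\widetilde\sigma_{11}=0$ on $(X_0,h^*_{X_0})$. Lemma~\ref{lem:flipped-pts-reduction} with the indices swapped gives $\widetilde\sigma_{vu}(X_0,h^*_{X_0})=-\sigma_{uv}(X_0,h)$. Substituting, we get $-\sigma_{01}-\sigma_{11}=0$, that is, $\sigma_{01}=-\sigma_{11}$. In the same way, $(u,v)=(2,1)$ gives $\widetilde\sigma_{21}+\widetilde\sigma_{22}=0$, which translates to $\sigma_{12}=-\sigma_{22}$. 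Combining the four relations yields the displayed chain, so $\sigma_{22}=0$ forces $\sigma_{00}=0$.

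The main point to verify is the index bookkeeping in Lemma~\ref{lem:flipped-pts-reduction}. The $180^\circ$ rotation exchanges the roles of $p$ and $s$, and with them the parameters $u$ and $v$. It also sends $h$ to $h^*_{X_0}$, and $(h^*_{X_0})^*=h$, which we must confirm from the definition $h^*_Y=C(Y,Y)-|Y|-h$ so that the correspondence goes back and forth. We must also check that the lemmas apply to every key under consideration, since Lemma~\ref{lem:adjacent-pts-balance} is stated for arbitrary $(X_0,h)$ with $k\ge2$. Here we use $k\ge2$ together with the cases $k=0,1$ handled earlier. Finally, we must check that the sign convention $(-1)^{|p|+|s|+\mathbf{1}_{\{|p|<|s|\}}}$ is the one used consistently in both lemmas. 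Once these checks are made, the argument is purely formal.
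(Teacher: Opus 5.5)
Your proposal is correct and follows the paper's proof. The paper simply invokes the chain $00\to01\to11\to12\to22$ set out before Lemma~\ref{lem:adjacent-pts-balance}, and then applies Corollary~\ref{lem:pts-signed-count}. In that chain, Lemma~\ref{lem:adjacent-pts-balance} handles the steps in $v$, and its flipped form, transported by Lemma~\ref{lem:flipped-pts-reduction}, handles the steps in $u$. You have just made the resulting sign relations $\sigma_{00}=-\sigma_{01}=\sigma_{11}=-\sigma_{12}=\sigma_{22}$ explicit.

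The only quibble is your remark about $k\le1$. The corollary is stated under the standing assumption $k\ge2$; for $k=0$ the key $(\{1\},0)$ has no $22$-$pTs$ objects, yet it has one simple lower and no simple upper. The earlier $k\le1$ analysis cannot fill this gap, since it only treats keys with $2h>\beta(X_0)$.
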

\begin{proof}
By the discussion before Lemma~\ref{lem:adjacent-pts-balance} $22$-$pTs$ balance implies $pTs$ balance.
By corollary~\ref{lem:pts-signed-count} this implies the key is balanced.
\end{proof}

\subsection{An Inequality}
For any multiset of integers, $X_0$, define
\[
\begin{aligned}
 h_z(X_0)
 &=\max\{t:(X_0,t)\text{ supports a simple lower or upper}\},\\
 h_0(X_0)
 &=\max\{t:(X_0,t)\text{ supports a $pTs$ object}\},\\
 h_2(X_0)
 &=\max\{t:(X_0,t)\text{ supports a $22$-$pTs$ object}\}.
\end{aligned}
\]
We take the maximum of an empty set to be $-\infty$.
\begin{lemma}\label{ineq}
For any multiset of integers $X_0$, 
\begin{equation}
\label{eq:nonexistence-bound}
 2\min\{h_z(X_0),h_0(X_0),h_2(X_0)\}
 \le\beta(X_0).
\end{equation}
\end{lemma}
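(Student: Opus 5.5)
Since the left side involves a minimum, it suffices to bound one of the three quantities, and the choice can depend on $X_0$. I would split on the shape of $X_0$ and in each case pick the easiest object class. The common tool is an ownership count in the style of Lemma~\ref{lem:reduction-bounds}: assign every non-dinv pair of the relevant word to one of its entries, and show that each entry owns at least its own contribution to $\beta(X_0)$. Rewrite
\[
\beta(X_0)=\binom{2k+2}{2}+k-2\mu(X_0)-\sum X_0 .
\]
Since $\dinv\le\binom{2k+1}{2}$, the target $2h\le\beta$ becomes a lower bound on the number of non-dinv pairs. That lower bound must absorb $\sum X_0$, the zero/negative correction $2\mu(X_0)$, and the slack $\binom{2k+1}{2}-\binom{2k+2}{2}/2$ (with the $k$ term). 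Concretely, I would show
\[
2\bigl(\#\text{non-dinv pairs}\bigr)\ge \binom{2k+1}{2}\cdot 2-\binom{2k+2}{2}-k+2\mu(X_0)+\sum X_0 .
\]

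\textbf{Step 1: $h_z$.} Treat $X_0$ with many positive entries, say $\sum X_0$ large, using simple lowers and uppers. In $A:B$ with $0:A$ affine and $B:(-1)$ reverse affine, an entry of value $v>0$ in $A$ is preceded by entries $0,\dots,v-1$ (or their descent chain), and symmetrically in $B$. This forces about $v$ entries at distance at least $2$ from it, hence about $v$ non-dinv pairs owned per unit of value. Negative entries in $B$ must be reachable from $-1$, which similarly gives non-dinv pairs paying for the $2\mu$ term. I expect this step alone to handle every $X_0$ except those where $\mu(X_0)$ is small relative to $\sum X_0$, or where the configuration is nearly a staircase. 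The $k=0,1$ computations at the start of this section illustrate the bookkeeping.

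\textbf{Step 2: $h_0$ and $h_2$ for the remaining case.} In the residual case I would use $pTs$ (or $22$-$pTs$) objects. There $p$ is dual starting from $0$ and $s$ is reverse dual ending at $-1$, so the entries of $p$ are pairwise at distance at least $2$. This gives $\binom{|p|}{2}$ non-dinv pairs inside $p$, $\binom{|s|}{2}$ inside $s$, and many cross pairs, since $p$ contains large positives and $s$ contains very negative values. For $T$, I would use the fact that rows are dual, so each row contributes no dinv internally, and columns are affine. With only two columns, each row pair gives one forced non-dinv pair. This yields roughly $k$ non-dinv pairs from $T$, which should supply the $+k$ term. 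The $22$ version relaxes the first two steps of $p$ and $s$ to affine. I would use it only when $|p|$ or $|s|\le 2$ forces $p,s$ to carry small values and the plain $pTs$ class becomes empty; in that regime the count matches the $k=1$ style analysis.

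\textbf{Main obstacle.} The main obstacle is organizing the case split so that every $X_0$ falls into a case where at least one class gives the bound. The boundary cases are delicate, especially contents near $\{-1,0,1,\dots\}$, where the earlier $k=1$ example shows equality $2h=\beta+1$ can occur for simple lowers and uppers. In those cases one must instead show that $pTs$ or $22$-$pTs$ objects achieve less. I would first try to prove the bound for $h_z$ with a deficit term measuring how far $X_0$ is from $\{-1,0,1,\dots,k\}$-like contents. Then I would show that whenever that deficit term vanishes, no $pTs$ object exists at large dinv, because the dual condition on $p$ forces a gap in the content. If the existence statement fails, $h_0=-\infty$ and the inequality is immediate.
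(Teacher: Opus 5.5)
Reducing to one class per content is logically fine. But the proposal never actually proves any of the bounds, and the two mechanisms it names cannot produce them. Write $N$ for the number of non-dinv pairs and $m_v$ for the multiplicity of $v$. Your target $2N\ge 2k^2-2k-1+2\mu(X_0)+\sum X_0$ then has a main term of about $k^2$ pairs.

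In Step~1, the pairs your mechanism forces (roughly $v$ for an entry of value $v$) number at most about $\sum_{v>0}v\,m_v$. Any content carrying a simple lower or upper has $\sum_{v>0}v\,m_v\le k^2+k+1$: $0:A$ affine caps the $i$th entry of $A$ by $i$, and $B:(-1)$ reverse affine caps the $j$th entry from the end of $B$ by $j-1$. So for nonnegative $X_0$ and $k\ge4$ this falls short of the target. There is also no source for the $2\mu$ term, because affine and reverse affine conditions restrict only ascents, so negative entries need no chain from $-1$.

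More decisively, $h_z$ by itself is not bounded by $\beta/2$ on large families of contents. For $X_0=\{0,1^{2k}\}$, the simple lower $A=(1^{k+1})$, $B=(1^{k-1},0)$ has all $\binom{2k+1}{2}$ pairs in dinv, while $\beta(X_0)=2k^2+2k-1$. In the paper, simple lowers and uppers serve mainly to restrict the content: minimum at most $0$, every positive value through $H$ present, and mass at most $k+2$ (Lemma~\ref{lem:app-support}). The one place their dinv is bounded (minimum $-1$, via $\operatorname{loss}\ge T_+$, Section~\ref{app:mass-sharpening}) uses it together with the $pTs$ capacity $m_0+m_1\le k$.

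Step~2 misreads the objects and misses the main term. Since $s:(-1)$ is reverse dual, $s_{-1}\ge1$ and $s_{-i-1}\ge s_{-i}+2$, so $s$ consists of large positive entries, not very negative ones. The $\Theta(k^2)$ control does not come from the $k$ within-row pairs of $T$. It comes from the matching property between dual factors: a dual or reverse dual factor $F$ and a later such factor $G$ share at most $\min(|F|,|G|)$ dinv pairs. Hence a factorization into $k$ dual pairs and a singleton has dinv at most $k^2-Q/2$, where $Q$ measures how far the factors fall short of full matchings. The paper produces such factorizations for both $pTs$ and $22$-$pTs$ objects in Lemma~\ref{lem:app-factorization}.

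Since $\beta=2k^2-1-\sum_v m_vw(v)$ with $w(v)=v-2$ for $v>0$, the real work is to make $Q$ absorb $1+\sum_v m_vw(v)$, which can be of order $kH$. This fails without the content restrictions that come from simple lowers and uppers. For $X_0=\{0^k,1,H^k\}$ with $H\ge5$, the $pTs$ object with $p=\varnothing$, $s=(1)$ and $k$ rows $(0,H)$ has $2\dinv-\beta(X_0)=k(H-4)>0$. The lemma survives there only because $2,\dots,H-1$ are missing, so $h_z=-\infty$.

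Finally, calling on $22$-$pTs$ objects ``when the plain $pTs$ class becomes empty'' is vacuous, since then $h_0=-\infty$ already. The lemma has content only when all three classes are nonempty. There the paper combines a weighted incidence bound (Lemma~\ref{lem:app-incidence}) with long small-height casework. In some regimes it proves only the joint bound $h_0+h_2\le\beta_L$ (Section~\ref{app:joint-loss}), not a bound on a designated class. Your case split contains none of these ingredients.
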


\begin{proof}
See appendix.
\end{proof}

\begin{theorem}
\label{thm:flat-middle}
For any  $n$ and  $d$, the number of Dyck paths of semilength $n$, deficit $d$, and
area $a$ is the same for every integer
\[
 a\in[d,\binom n2-2d].
\]
\end{theorem}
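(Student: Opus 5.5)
We assemble the pieces already in place. The counts of Dyck paths at consecutive areas are compared through the bijections $\phi$ and $\psi$. Fix $n$ and $d$, and write $N(a)$ for the number of Dyck paths of semilength $n$, deficit $d$ and area $a$. Since $\phi$ and $\psi$ preserve area and dinv, hence deficit, we have
\[
N(a)=\#\{\text{lowers of area } a\}=\#\{\text{uppers of area } a\}+\#\{\text{full Dyck skeletons of area } a\},
\]
all at deficit $d$. By Lemma~\ref{lem:full-skeleton-statistics}, every full Dyck skeleton $D$ satisfies $\dinv(D)+2\area(D)\le M$, i.e.\ $\area(D)\le\defc(D)$. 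So for $a+1>d$ no full Dyck skeleton contributes, and $N(a+1)$ equals the number of uppers of area $a+1$. It therefore suffices to prove that for every $a\in[d,M-2d)$ the number of lowers of area $a$ equals the number of uppers of area $a+1$, both at deficit $d$. This gives $N(a)=N(a+1)$ for all such $a$, and chaining these equalities gives constancy on $[d,M-2d]$.

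By Lemma~\ref{lem:simple-key-reduction}, this in turn reduces to showing that every simple key $(X_0,h)$ with $2h>\beta(X_0)$ is balanced. Three cases are already handled in the paper:
\begin{itemize}
\item When $\max(X_0)\le0$, balance follows from Proposition~4.16 and Corollary~3.20 of \cite{Hawkes26}, as remarked at the start of Section~3.
\item When $k\in\{0,1\}$, balance was checked directly.
\item When the key supports neither a simple lower nor a simple upper, it is trivially balanced.
\end{itemize}
So assume $k\ge2$, $\max(X_0)\ge1$, and that the key supports a simple lower or upper. By Corollary~\ref{22}, it is enough to show that the $22$-$pTs$ objects on $(X_0,h)$ are balanced.

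This is where Lemma~\ref{ineq} enters. Since the key supports a simple lower or upper at $h$, we have $h\le h_z(X_0)$, so $2h_z(X_0)\ge 2h>\beta(X_0)$. Lemma~\ref{ineq} then forces $2\min\{h_0(X_0),h_2(X_0)\}\le\beta(X_0)<2h$, so $h$ exceeds $h_0(X_0)$ or $h_2(X_0)$.
\begin{itemize}
\item If $h>h_2(X_0)$, there are no $22$-$pTs$ objects at $(X_0,h)$. The empty signed sum is zero, and we conclude by Corollary~\ref{22}.
\item If $h>h_0(X_0)$, there are no $pTs$ objects at $(X_0,h)$. The right side of \eqref{eq:pts-count} vanishes, so \eqref{eq:telescoping-balance} holds and the key is balanced directly by Corollary~\ref{lem:pts-signed-count}.
\end{itemize}

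The main obstacle is the bookkeeping at the endpoints, since the interval in the theorem is closed at $M-2d$ while the balance statements cover $a\in[d,M-2d)$. The chain $N(d)=N(d+1)=\cdots=N(M-2d)$ uses exactly the steps $a\to a+1$ for $a\in[d,M-2d)$, which is what Lemma~\ref{lem:simple-key-reduction} provides. I would also check two further points:
\begin{itemize}
\item At $a=d$ the lower count equals $N(d)$ even though full skeletons may have area $d$. This holds because skeletons enter only the upper side, and we only use uppers at area $a+1>d$.
\item The degenerate situations $n\le1$ and $d>M/3$ make the interval a single point or empty, so the statement is vacuous there.
\end{itemize}
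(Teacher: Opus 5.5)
Your proposal is correct and follows essentially the same route as the paper. You reduce via $\phi,\psi$ and Lemma~\ref{lem:full-skeleton-statistics} to lower/upper balance, pass to simple keys by Lemma~\ref{lem:simple-key-reduction}, dispose of $k\le1$ and $\max(X_0)\le0$ separately, and use Lemma~\ref{ineq} to force one of the three classes to be empty at $(X_0,h)$, concluding via Corollary~\ref{lem:pts-signed-count} or Corollary~\ref{22}; your endpoint bookkeeping at $a=d$ and $a=M-2d$ simply makes explicit what the paper states at the end of Section~1.
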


\begin{proof}
By Lemma~\ref{lem:simple-key-reduction} it suffices to show there are an equal number of lowers and uppers
on each simple key 
$(X_0,h)$ satisfying
\[
 2h>\beta(X_0).
\]
This has already been established for $k \leq 1$. Otherwise, the lemma~\ref{ineq} implies that at least one of the following
holds: The key supports neither a simple lower nor a
 simple upper in which case the key is automatically balanced. The key supports no $pTs$ objects in which case the key is balanced by Corollary~\ref{lem:pts-signed-count}.
The key supports no $22$-$pTs$ objects in which case the key is balanced by Corollary~\ref{22}.

\end{proof}

\section{Bijection}
\label{sec:bijection}

We now turn the cancellations in the proof of
Theorem~\ref{thm:flat-middle} into explicit bijections between
lowers and uppers whose area differs by $1$ on keys satisfying \eqref{eq:weak-wall} and
\eqref{eq:strict-wall}. Composing these with $\phi$ and $\psi$ from
Section~1 gives bijections between Dyck paths of consecutive areas, and hence the
required involution inside the treated region.  Our construction is an explicit constructive algorithmic matching: 
at no point do we use equality of cardinalities to define a bijection by arbitrarily ordering two equinumerous sets and matching their $i^{th}$ elements.

\paragraph{Finite cancellation.}

Suppose that we have bijections of finite sets
\[
 f:A\sqcup C\longrightarrow B\sqcup D,
 \qquad g:C\longrightarrow D.
\]
Starting at $x\in A$, apply $f$. Whenever the result belongs to $D$,
apply $g^{-1}$ followed by $f$, and continue until the result belongs
to $B$. Denote this map by $\operatorname{Cancel}(f,g)$. It is a
bijection from $A$ to $B$, with inverse
$\operatorname{Cancel}(f^{-1},g^{-1})$
\cite{GarsiaMilne81,Doyle19}.

\subsection{Affine Dyck Insertion}

\begin{lemma}[Affine Dyck insertion]
\label{lem:affine-insertion}
Affine Dyck factorizations of fixed integer content $X_0$ and
$\dinv$ $h$ are in weight-preserving bijection with pairs $(T,Q)$
of the same shape, where $T$ is a Dyck tableau with this content
and $\dinv$, and $Q$ is reverse semistandard: rows increase strictly
and columns increase weakly. 
\end{lemma}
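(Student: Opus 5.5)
The plan is to obtain Lemma~\ref{lem:affine-insertion} from the dual Dyck insertion of \cite{Hawkes26} by running it on reversed data, rather than building a new insertion algorithm. Recall that \cite[Theorem~3.18 and Corollaries~3.19, 3.20]{Hawkes26} give a content- and $\dinv$-preserving bijection between dual Dyck factorizations (factors whose rows are dual Dyck sequences) and pairs $(T,Q)$ with $T$ a Dyck tableau and $Q$ of the appropriate semistandard type. Moreover, \cite[Proposition~4.16]{Hawkes26} lets us trade affine factors for reverse affine or dual ones; this is the device already used in the proofs of Lemma~\ref{alt} and Lemma~\ref{lem:simple-key-reduction}. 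So the first step is to make precise a translation sending an affine Dyck factorization $w=w^{(1)}:\cdots:w^{(r)}$ of content $X_0$ and $\dinv$ $h$ to a factorization to which dual Dyck insertion applies. This translation must preserve content, $\dinv$, and the weight vector $(|w^{(1)}|,\ldots,|w^{(r)}|)$, possibly up to reversal.

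Concretely, the approach is to insert the letters of each affine factor one at a time into a growing Dyck tableau. I would use the row-insertion step of dual Dyck insertion, which already handles single letters, and record in $Q$ the index of the factor responsible for each new box. The key steps, in order, are as follows.

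\begin{enumerate}
\item Show that inserting the consecutive letters of one affine factor produces new boxes forming a vertical-type strip, with no two boxes in the same row. This is the analogue of the row-bumping lemma, with ``affine'' in place of ``dual.'' It is what forces $Q$ to have strictly increasing rows and weakly increasing columns, so that $Q$ is reverse semistandard.
\item Check that $\dinv$ is preserved at every single insertion step. I take this directly from the corresponding statement in \cite{Hawkes26}, since each step there is one letter insertion and does not depend on how the letters are grouped into factors.
\item Build the inverse. Reverse-insert from the boxes of $Q$ carrying the largest label, in the order dictated by the strip, and check that the recovered letters form an affine sequence.
\end{enumerate}

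Bijectivity then follows from the usual argument that insertion and reverse insertion undo each other letter by letter.

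I expect the main obstacle to be step 1, the strip lemma for affine factors. With dual factors, consecutive letters increase by at least $2$, and the bumping paths move weakly in one direction. With affine factors, $d_{i+1}\le d_i+1$, which allows both small increases and arbitrary decreases, so the comparison of bumping paths must be split into the cases $d_{i+1}=d_i+1$, $d_{i+1}=d_i$, and $d_{i+1}<d_i$.

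If the direct comparison becomes unwieldy, I would fall back on the symmetric-function route used in Lemma~\ref{lem:tableau-reversal}:
\begin{enumerate}
\item Use Proposition~4.16 of \cite{Hawkes26} to equate the affine Dyck symmetric function of $(X_0,h)$ with the dual one.
\item Read off its Schur expansion from Corollary~3.19 of \cite{Hawkes26}. The coefficients count Dyck tableaux, and $s_\lambda$ expanded in the monomial basis with conjugate-type fillings counts exactly the reverse semistandard $Q$ of shape $\lambda$.
\item Replace the resulting cardinality statement by an explicit bijection, obtained by composing the bijections of \cite{Hawkes26} with the involution-principle construction $\operatorname{Cancel}$ above.
\end{enumerate}
The composition in the last step is what keeps the bijection explicit, in the spirit of Section~\ref{sec:bijection}.
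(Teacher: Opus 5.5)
Your proposal has a genuine gap: the main route rests on an unproved property of the insertion, and the fallback both contains an error and lacks the idea needed to apply cancellation.

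\textbf{The main route.} It puts the whole content of the lemma into step~1, with the converse in step~3. Step~2 also presupposes that the dual Dyck insertion of \cite{Hawkes26} works one letter at a time, with output independent of how the word is cut into factors. Nothing in the paper supplies either assumption. Dual Dyck insertion is only ever applied to whole dual factors, which enter as horizontal strips, and the bijection $I_Z$ is defined by cutting at $Z$ first.

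Suppose a single grouping-independent map $I$ restricted, for every $Z$, to bijections $\{w:\operatorname{Aff}(w)\subseteq Z\}\to\{(T,V):\operatorname{Des}(V)\subseteq Z\}$. Taking $Z=\operatorname{Aff}(w)$ for $I$, and $Z=\operatorname{Des}(I(w))$ for $I^{-1}$, would give $\operatorname{Des}(I(w))=\operatorname{Aff}(w)$. That already contains your vertical-strip lemma, and the statement would be immediate. The paper instead has to manufacture exactly this refined matching by cancellation. So step~1 is not a routine case check on $d_{i+1}-d_i$ that can be deferred. It is an unproved property of an algorithm whose letter-level behaviour you never specify. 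Without it, and without the matching reverse-insertion claim, the main route does not go through.

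\textbf{The fallback.} The affine and dual generating functions of $(X_0,h)$ are not equal; they are related by $\omega$. For content $\{0,1\}$ and $h=0$, the only word of dinv $0$ is $01$, which is affine but not dual. So the dual function is $m_{11}=s_{11}$, while the affine one is $m_2+m_{11}=s_2$. Proposition~4.16 of \cite{Hawkes26} only trades reverse affine factors for affine ones. Also, reverse semistandard fillings of shape $\lambda$ and weight $\nu$ number $K_{\lambda'\nu}$, which is a coefficient of $s_{\lambda'}$, not of $s_\lambda$. Finally, $\operatorname{Cancel}$ needs explicit maps $f:A\sqcup C\to B\sqcup D$ and $g:C\to D$, and an equality of Schur coefficients provides neither.

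\textbf{The missing idea.} The paper refines both sides by $\operatorname{Aff}(w)=\{i:w_{i+1}\le w_i+1\}$ and by $\operatorname{Des}(V)$ of the standardized recording tableau $V$.
\begin{itemize}
\item Dual insertion after cutting at $Z$ gives bijections $I_Z:\{\operatorname{Aff}(w)\subseteq Z\}\to\{\operatorname{Des}(V)\subseteq Z\}$.
\item Induction on $|Z|$ with $\operatorname{Cancel}\bigl(I_Z,\bigsqcup_{Z'\subsetneq Z}E_{Z'}\bigr)$ yields $E_Z:\{\operatorname{Aff}(w)=Z\}\to\{\operatorname{Des}(V)=Z\}$.
\item An affine factorization with cut set $Z$ is precisely a word with $\operatorname{Dual}(w)\subseteq Z$. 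So, with $J=\{1,\ldots,|X_0|-1\}$, the union of the $E_Y$ over $Y\supseteq J\setminus Z$ matches these with $\{(T,V):\operatorname{Asc}(V)\subseteq Z\}$.
\item Destandardizing such $V$ gives exactly the reverse semistandard $Q$ of weight $\nu$.
\end{itemize}
This realizes $\omega$ bijectively on fundamental quasisymmetric expansions. It never needs any single-letter behaviour of the insertion.
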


\begin{proof}
Put $m=|X_0|$ and $J=\{1,\ldots,m-1\}$. For a word
$w=(w_1,\ldots,w_m)$ of content $X_0$ and $\dinv$ $h$, define
\[
 \operatorname{Aff}(w)=\{i\in J:w_{i+1}\le w_i+1\},
 \qquad \operatorname{Dual}(w)=J\setminus\operatorname{Aff}(w).
\]
Consider pairs $(T,V)$ where $T$ is a Dyck tableau of content $X_0$
and $\dinv$ $h$, and $V$ is a standard tableau of the same shape.
Write $\operatorname{Des}(V)$ for the positions $i$ where $i+1$
lies in a lower row, and
$\operatorname{Asc}(V)=J\setminus\operatorname{Des}(V)$.

For  $Z\subseteq J$, cutting $w$ into factors at $Z$, applying dual Dyck
insertion \cite[Theorem~3.18]{Hawkes26}, and standardizing equal
recording labels left to right gives a bijection
\[
 I_Z:\{w:\operatorname{Aff}(w)\subseteq Z\}
 \longleftrightarrow
 \{(T,V):\operatorname{Des}(V)\subseteq Z\}.
\]
We construct refined bijections 
\[
 E_Z:\{w:\operatorname{Aff}(w)= Z\}
 \longleftrightarrow
 \{(T,V):\operatorname{Des}(V)= Z\}
\]
by induction on $|Z|$.
For $Z=\varnothing$, the bijection is obvious.
Suppose $E_{Z'}$ is constructed for $|Z'|<|Z|$.
Now set
\[
\begin{aligned}
 A&=\{w:\operatorname{Aff}(w)=Z\},
 &B&=\{(T,V):\operatorname{Des}(V)=Z\},\\
 C&=\{w:\operatorname{Aff}(w)\subsetneq Z\},
 &D&=\{(T,V):\operatorname{Des}(V)\subsetneq Z\},\\
 f&=I_Z,
 &g&=\bigsqcup_{Z'\subsetneq Z}E_{Z'}.
\end{aligned}
\]
By induction and finite cancellation this constructs $E_Z$ for all $Z$.
We may think of $E_{Z^c}$ as matching $\{w: \operatorname{Dual}(w)=Z\}$ to
$\{(T,V): \operatorname{Asc}(V)=Z\}$.
For $Z\subseteq J$, taking the disjoint union gives
\[
 \bigsqcup_{Z'\subseteq Z}E_{(Z')^c}:
 \{w:\operatorname{Dual}(w)\subseteq Z\}
 \longleftrightarrow
 \{(T,V):\operatorname{Asc}(V)\subseteq Z\}.
\]
For $Z=\{z_1<\cdots<z_\ell\}\subseteq J$, this is equivalent to a
bijection from affine factorizations of weight
\[
\nu= (z_1,z_2-z_1,z_3-z_2,\ldots,m-z_\ell)
\]
to pairs $(T,Q)$ of the same shape, where $T$ is a Dyck tableau and
$Q$ is a reverse semistandard tableau (formed by filling boxes with the ordered multiset corresponding to $\nu$ in the order specified by $V$)
of the same shape, and weight $\nu$. 
Since $Z$ is arbitrary, the result follows.
\end{proof}

Write $\mathrm{fw}$ for the reverse affine to affine bijection of
\cite[Proposition~4.16]{Hawkes26}, and $\mathrm{bk}=\mathrm{fw}^{-1}$.

\begin{lemma}
\label{lem:mixed-cut}
Fix $X_0$, $h$, and $a,b\ge0$ with
$a+b=|X_0|$. There is a bijection $\sigma_{a,b}$:
\[
\left\{(A,B):
\begin{array}{c}
A\text{ affine},\ B\text{ reverse affine},\\ (|A|,|B|)=(a,b),\\
\operatorname{cont},\dinv(A:B)=X_0, h
\end{array}
\right\}
\longleftrightarrow
\left\{T:
\begin{array}{c}
\operatorname{cont}(T)=X_0,\  \dinv(T)=h,\\
\#\operatorname{cols}(T)\le2,\ 
\#\operatorname{rows}(T)\ge\max(a,b)
\end{array}
\right\}.
\]
In particular, if $|X_0|=2k+1$, there is a bijection $\omega$:
\[
\left\{(A,B):
\begin{array}{c}
A\text{ affine},\ B\text{ reverse affine},\\ 
(|A|,|B|)=(k+1,k),\\
\operatorname{cont}, \dinv(A:B)=X_0, h
\end{array}
\right\}
\longleftrightarrow
\left\{(A',B'):
\begin{array}{c}
A'\text{ affine},\ B'\text{ reverse affine},\\ 
(|A'|,|B'|)=(k,k+1),\\
\operatorname{cont},\ \dinv(A':B')=X_0,h
\end{array}
\right\}.
\]
\end{lemma}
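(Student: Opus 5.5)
We build $\sigma_{a,b}$ as a composite of three bijections and then obtain $\omega$ by going through the tableau side, since the right-hand set of $\sigma_{a,b}$ depends on $(a,b)$ only through $\max(a,b)$.

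\textbf{Step 1: normalize $B$.} Apply $\mathrm{fw}$ of \cite[Proposition~4.16]{Hawkes26} to turn the reverse affine second factor into an affine one. This is applied to $B$ while $A$ is held fixed, so it should preserve total content and the $\dinv$ of $A:B$; we take this compatibility from how Proposition~4.16 was already used in Lemma~\ref{alt}. The result is a bijection onto affine Dyck factorizations $(A,B)$ of content $X_0$ and $\dinv$ $h$ with factor lengths $(a,b)$.

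\textbf{Step 2: pass to tableau pairs.} Apply Lemma~\ref{lem:affine-insertion}. This sends such factorizations bijectively to pairs $(T,Q)$ of the same shape, where $T$ is a Dyck tableau of content $X_0$ and $\dinv$ $h$, and $Q$ is reverse semistandard of weight $(a,b)$. The weight is what the lemma calls $\nu=(z_1,m-z_1)$ for $Z=\{a\}$. When $a=0$ or $b=0$ only one letter occurs, and we take the degenerate case $Z=\varnothing$ or the trivial statement.

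\textbf{Step 3: count the fillings $Q$.} Rows of $Q$ are strictly increasing and only two labels are available, so the shape has at most two columns. Say it has column lengths $c_1\ge c_2$.
\begin{itemize}
\item Every row of length $2$ must read (first label, second label).
\item The $c_1-c_2$ rows of length $1$ are filled weakly increasing down the first column, and column-weak monotonicity forces the first-label entries to sit on top.
\item So there are exactly $a-c_2$ first labels among those single rows. A filling exists iff $0\le a-c_2\le c_1-c_2$, i.e.\ $c_2\le\min(a,b)$ and $c_1\ge\max(a,b)$.
\item Since $c_1+c_2=a+b$, this is equivalent to $c_1\ge\max(a,b)$, and the filling is then unique.
\end{itemize}
Hence forgetting $Q$ is a bijection onto the Dyck tableaux with at most two columns and at least $\max(a,b)$ rows, which gives $\sigma_{a,b}$.

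\textbf{Obtaining $\omega$.} Both $(k+1,k)$ and $(k,k+1)$ have maximum $k+1$, so the right-hand sets of $\sigma_{k+1,k}$ and $\sigma_{k,k+1}$ coincide. Set
\[
\omega=\sigma_{k,k+1}^{-1}\circ\sigma_{k+1,k}.
\]
No cancellation is needed for this step.

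\textbf{Main obstacle.} The delicate point is Step 3's reverse semistandard conventions, in particular the direction of column monotonicity and which label goes on top, since these determine uniqueness. A second point is making sure Lemma~\ref{lem:affine-insertion} is applied with factor lengths in the stated order $(a,b)$. If the conventions are reversed, the same count still gives at most two columns and $c_1\ge\max(a,b)$, because the condition is symmetric in $a$ and $b$, so the statement is robust. Everything used is explicit, so the bijection is constructive, as Section~\ref{sec:bijection} requires.
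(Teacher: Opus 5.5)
Your proposal is correct and follows the paper's proof exactly. You apply $\mathrm{fw}$ to the second factor, pass to pairs $(T,Q)$ via Lemma~\ref{lem:affine-insertion}, observe that a reverse semistandard $Q$ of weight $(a,b)$ exists (and is then unique) precisely when the shape has at most two columns and at least $\max(a,b)$ rows, and set $\omega=\sigma_{k,k+1}^{-1}\circ\sigma_{k+1,k}$; your Step~3 count makes explicit what the paper asserts in one line. Your closing ``robustness'' aside is only half right, since reversing the column direction would force $c_1=a$, but your argument uses the stated convention and does not depend on that remark.
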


\begin{proof}
Compose $\mathrm{fw}$ on the second factor with
Lemma~\ref{lem:affine-insertion}, and use $\mathrm{bk}$ in the inverse.
A reverse semistandard recording tableau of weight $(a,b)$ exists
exactly when the shape has at most two columns and at least
$\max(a,b)$ rows, and in that case it is unique. This gives
$\sigma_{a,b}$. The final statement follows by composing
$\sigma_{k+1,k}$ with $\sigma_{k,k+1}^{-1}$.
\end{proof}

\subsection{Complementation}

\begin{lemma}[Complement map]
\label{lem:dyck-rotation}
Fix a multiset $Y$ and $\dinv$ $h$. There is a content and shape preserving bijection $\chi_Y$ on at most two column Dyck tableaux that sends 
$h$ to $h_Y^*$.
\end{lemma}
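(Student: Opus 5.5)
We make the counting argument in the proof of Lemma~\ref{lem:tableau-reversal} explicit, so that the equality of cardinalities there becomes an actual bijection. The relevant objects are Dyck tableaux with at most two columns, so we work with factorizations into two factors. Fix a shape $\lambda$ with at most two columns.

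\textbf{Step 1: realize $\lambda$-tableaux through factorizations.} By Lemma~\ref{lem:mixed-cut}, the map $\sigma_{a,b}$ sends pairs $(A,B)$ of content $Y$ and $\dinv$ $h$, with $A$ affine, $B$ reverse affine and $(|A|,|B|)=(a,b)$, bijectively onto tableaux with at most two columns and at least $\max(a,b)$ rows. Taking $a$ equal to the number of rows of $\lambda$, the tableaux of shape exactly $\lambda$ are the image of the $(a,b)$ pairs with $b=|Y|-a$. From this image we remove the tableaux with more rows. The set to remove is itself the image under $\sigma_{a+1,b-1}$ when $a+1\ge b-1$.

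\textbf{Step 2: build a complement map on factorizations.} On pairs $(A,B)$ we use reversal. Reversing the word $A:B$ gives $\mathrm{rev}(B):\mathrm{rev}(A)$, whose $\dinv$ is $h_Y^*$ by the remark defining $h_Y^*$. Here $\mathrm{rev}(B)$ is affine and $\mathrm{rev}(A)$ is reverse affine, with lengths $(b,a)$. So reversal is an involution from $(a,b)$-pairs with $\dinv$ $h$ to $(b,a)$-pairs with $\dinv$ $h_Y^*$. The $\dinv$-preserving map $\sigma_{b,a}^{-1}\circ\sigma_{a,b}$ (from Lemma~\ref{lem:mixed-cut}, whose tableau side depends only on $\max(a,b)$) lets us swap back to lengths $(a,b)$. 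Composing, we get an explicit bijection $\rho_{a,b}$ on $(a,b)$-pairs that sends $\dinv$ $h$ to $h_Y^*$.

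\textbf{Step 3: restrict to exact shapes.} We transport $\rho_{a,b}$ through $\sigma_{a,b}$. This gives a bijection $R_a$ on the set of two-column tableaux with at least $a$ rows (for $a\ge |Y|/2$) that sends $h$ to $h_Y^*$. The set with exactly $a$ rows is the difference of the sets with at least $a$ rows and at least $a+1$ rows. On the latter we already have the bijection $R_{a+1}$, and by downward induction on $a$ we may assume the exact-shape maps $\chi$ for all larger row counts are already built. Then $\operatorname{Cancel}(R_a,\bigsqcup_{a'>a}\chi_{a'})$, via the finite cancellation principle, gives a bijection between the tableaux of shape exactly $\lambda$ with $\dinv$ $h$ and those with $\dinv$ $h_Y^*$. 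This is $\chi_Y$ on shape $\lambda$.

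Content is preserved at every step, and shape is preserved by construction. Replacing $h$ by $h_Y^*$ twice returns $h$, so $\chi_Y$ is a bijection between the two $\dinv$ classes, as the statement requires.

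\textbf{Main obstacle.} The delicate point is the bookkeeping in Step 3. The row-count bound $\max(a,b)$ must be arranged so that the nested sets are exactly those in Lemma~\ref{lem:mixed-cut}. Also, the maps in the cancellation must all go between the $h$ and $h_Y^*$ levels, so that $\operatorname{Cancel}$ applies with $f=R_a$ and $g$ the previously built exact-shape maps. Boundary cases need care: $a=|Y|$, and the unique one-column shape.
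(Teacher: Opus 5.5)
Your proposal is correct and is essentially the paper's proof. Unwinding your detour through $(a,b)$-pairs, $R_a=\sigma_{a,b}\circ\rho_{a,b}\circ\sigma_{a,b}^{-1}=\sigma_{b,a}\circ R\circ\sigma_{a,b}^{-1}$ is exactly the paper's $f_a=\sigma_{m-a,a}\circ R\circ\sigma_{a,m-a}^{-1}$, and your downward induction with finite cancellation against $\bigsqcup_{r>a}\chi_Y$ matches the paper's. Note that the swap back you need is $\sigma_{a,b}^{-1}\circ\sigma_{b,a}$, the inverse of the map you wrote.
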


\begin{proof}

Put $m=|Y|$, and let $\mathcal T_a(Y,h)$ denote the Dyck tableaux
of content $Y$ and $\dinv$ $h$ having exactly $a$ rows and at most two columns. Write
\[
h_Y^*=C(Y,Y)-m-h.
\]

For $A$ affine and $B$ reverse affine, define
\[
R(A,B)=(B^{\mathrm{rev}},A^{\mathrm{rev}}).
\]
Then $R$ preserves content, interchanges the factor lengths, and
sends $\dinv$ $h$ to $h_Y^*$.

For $a\ge m-a$, Lemma~\ref{lem:mixed-cut} identifies affine/reverse affine pairs of
lengths $(a,m-a)$ with
\[
\bigsqcup_{r\ge a}\mathcal T_r(Y,h),
\]
and affine/reverse affine pairs of lengths $(m-a,a)$ with  \[
\bigsqcup_{r\ge a}\mathcal T_r(Y,h_Y^*).
\]Hence
\[
f_a=\sigma_{m-a,a}\circ R\circ\sigma_{a,m-a}^{-1}
\]
is a bijection
\[
\bigsqcup_{r\ge a}\mathcal T_r(Y,h)
\longleftrightarrow
\bigsqcup_{r\ge a}\mathcal T_r(Y,h_Y^*).
\]

Suppose by induction that for every $r>a$ we have already
constructed a shape-preserving bijection
\[
\chi_Y:\mathcal T_r(Y,h)
\longleftrightarrow
\mathcal T_r(Y,h_Y^*).
\]
Thus, applying finite cancellation to $f_a$ and
\[
\bigsqcup_{r>a}\chi_Y:
\bigsqcup_{r>a}\mathcal T_r(Y,h)
\longleftrightarrow
\bigsqcup_{r>a}\mathcal T_r(Y,h_Y^*)
\]
gives
\[
\chi_Y:\mathcal T_a(Y,h)
\longleftrightarrow
\mathcal T_a(Y,h_Y^*).
\]

\end{proof}

\subsection{Realizing the Telescoping Cancellation}

Fix $(X_0,h)$ as in Section~3.2. Set
\[
 \mathcal Q=\bigsqcup_{i,j=0}^k
 \bigl(\mathrm{LOW}_{ij}\sqcup\mathrm{UP}_{ij}\bigr),
\]
where each $q \in \mathcal Q$ is considered distinct due to its association with a particular $\mathrm{LOW}_{ij}$ or $\mathrm{UP}_{ij}$. 
 Set 
$\varepsilon(q)=(-1)^{i+j}$  for  each $q \in \mathrm{LOW}_{ij}$ and
$\varepsilon(q)=-(-1)^{i+j}$  for each  $q \in \mathrm{UP}_{ij}$.

\begin{lemma}
\label{lem:outer-cut-matching}
There is a key-preserving sign-reversing matching $\eta$ on
$\mathcal Q$ leaving precisely the simple lowers and uppers
unmatched.
\end{lemma}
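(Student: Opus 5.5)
The matching $\eta$ is the bijective shadow of the telescoping identity of Section~3.2. Every $q\in\mathcal Q$ is a pair $(A,B)$ lying in one $\mathrm{Low}_{i'j'}$ (or $\mathrm{Up}_{i'j'}$). It appears in $\mathcal Q$ once for each $(i,j)\in\{0,\ldots,k\}^2$ with $i'\in\{i,i+1\}$ and $j'\in\{j,j+1\}$.

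\textbf{Step 1 (canonical pairing in the $i$-direction).} For a copy of $(A,B)\in\mathrm{Low}_{i'j'}$ sitting in $\mathrm{LOW}_{ij}$, I would first try to pair it within the $i$-direction. If $1\le i'\le k$, its copies indexed by $i=i'-1$ and $i=i'$ (with the same $j$) have opposite signs $(-1)^{i+j}$, and I match them. The same rule applies to $\mathrm{Up}$, where $\mathrm{Up}_{k+1,j}=\varnothing$ by convention. The unmatched copies are then those with $i'=0$ (which occur only for $i=0$) and those with $i'=k+1$ (which occur only for $i=k$).

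\textbf{Step 2 (pairing the remaining copies in the $j$-direction).} On the copies left after Step~1, I pair in the same way between $j=j'-1$ and $j=j'$ whenever $1\le j'\le k$. This is again sign-reversing, since $\varepsilon$ depends on $i+j$ and only $j$ changes by one. Both steps are involutions and never mix $\mathrm{LOW}$ with $\mathrm{UP}$. They also fix the underlying pair $(A,B)$, so they preserve content and $\dinv$, hence the key. Taking $\eta$ to be the Step~1 involution on the copies it moves and the Step~2 involution on the rest gives a well-defined involution, because the rule for which step applies depends only on $(i',j')$ and is the same for both copies in a pair.

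\textbf{Step 3 (identifying the fixed points).} The copies unmatched after both steps have $i'\in\{0,k+1\}$ and $j'\in\{0,k+1\}$. By the argument in Section~3.2, $\max(X_0)\le k+1$ forces $\mathrm{Low}_{k+1,0}$, $\mathrm{Low}_{0,k+1}$ and $\mathrm{Low}_{k+1,k+1}$ to be empty. The conventions $\mathrm{Low}_{i,k+1}=\varnothing$ and $\mathrm{Up}_{k+1,j}=\varnothing$, together with the corresponding emptiness for $\mathrm{Up}$, leave only $\mathrm{Low}_{00}$ and $\mathrm{Up}_{00}$, each appearing once, in $\mathrm{LOW}_{00}$ and $\mathrm{UP}_{00}$ respectively. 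These are exactly the simple lowers and uppers.

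\textbf{Main obstacle.} The only delicate point is the boundary bookkeeping: I must check that the emptiness claims really cover every corner index, in particular those involving $i'=k+1$ for $\mathrm{Up}$ and $j'=k+1$ for $\mathrm{Low}$, where I rely on the stated conventions. I must also check that when $k+1$ is reachable in one coordinate, the other coordinate still has its full pairing range.
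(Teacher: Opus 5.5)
Your proposal is correct and follows essentially the paper's proof. The paper also matches the copies of a pair in $\mathrm{Low}_{ab}$ with $1\le a\le k$ between $\mathrm{LOW}_{(a-1)c}$ and $\mathrm{LOW}_{ac}$, matches $a=0$ copies between $\mathrm{LOW}_{0(b-1)}$ and $\mathrm{LOW}_{0b}$, and treats uppers the same way. The only difference is at the boundary: the paper leaves every copy with first index $k+1$ unmatched and relies on $\mathrm{Low}_{k+1,j}$ being empty for all $j$ (since $x_{k+1}\ge 2k+2>\max(X_0)$), whereas your Step~2 also pairs those copies in the $j$-direction, so you need only the three corner emptiness facts stated explicitly in Section~3.2.
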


\begin{proof}
 Consider the elements of $\mathcal Q$ that are elements of $\mathrm{Low}_{ab}$. If $ 1 \leq a \leq k$, match the elements of $\mathrm{Low}_{ab} \in \mathrm{LOW}_{(a-1)c}$ to the elements of $\mathrm{Low}_{ab} \in \mathrm{LOW}_{ac}$
where $c=b$ or also $c=b-1$ if $b>0$.  If $a=0$ and $1 \leq b \leq k$,
match the elements of $\mathrm{Low}_{ab} \in \mathrm{LOW}_{a(b-1)}$ to the elements of $\mathrm{Low}_{ab} \in \mathrm{LOW}_{ab}$.
This matches all elements of $\mathcal Q$ except elements of $\mathrm{Low}_{ab}$ for $(a,b)$ both $0$ or at least one equal to $k+1$, but $\mathrm{Low}_{ab}$
is empty when at least one of $a,b$ equals $k+1$ for $k \geq 2$ (Section~3.2). Thus the only unmatched elements are those of $\mathrm{Low}_{00}$.  
Extend the matching to uppers in the same way.

\end{proof}

For $pTs$ objects, define
$\varepsilon(p,T,s)=
(-1)^{|p|+|s|+\mathbf1_{\{|p|<|s|\}}}$.

\begin{lemma}
\label{lem:middle-cut-matching}
Let $\mathcal E\subseteq\mathcal Q$ consist of the occurrences
in $\mathrm{LOW}_{ij}\sqcup\mathrm{UP}_{ij}$ whose middle pairs
insert to tableaux with first column of the length
$c = \max(k-i,k-j)$.
There is a complete key-preserving sign-reversing matching
$\kappa$ on $\mathcal Q\setminus\mathcal E$, pairing lowers
with uppers of the same indices.
The excluded occurrences admit a key-preserving sign-preserving
bijection $\pi$ from $\mathcal E$ to the set of $pTs$ objects for the fixed key.
\end{lemma}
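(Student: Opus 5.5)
We make explicit the proof of Lemma~\ref{alt}, using the bijections of Section~4 instead of cardinality counts. Fix an occurrence $q\in\mathrm{LOW}_{ij}\sqcup\mathrm{UP}_{ij}$. Write it as $p:x:y:s$, where $p$ is the dual prefix of length $i$ and $s$ is the reverse dual suffix of length $j$; these lengths are fixed by the index $(i,j)$ of the occurrence, not by the pair itself. The middle pair $(x,y)$ consists of an affine factor and a reverse affine factor with content $Y=X_0\setminus p\setminus s$. Its $\dinv$ is $h'$, defined as in the proof of Lemma~\ref{alt}. Its factor lengths are $(k+1-i,k-j)$ for a lower and $(k-i,k+1-j)$ for an upper.

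Apply $\sigma_{a,b}$ from Lemma~\ref{lem:mixed-cut} to the middle pair. This sends it to a Dyck tableau $T$ of content $Y$ and $\dinv$ $h'$, with at most two columns and at least $\max(a,b)$ rows. The lower lengths and the upper lengths have the same maximum whenever the first column is longer than that of the upper bound $c=\max(k-i,k-j)$. Concretely:
\begin{itemize}
\item If $i<j$, the lower needs at least $k+1-i$ rows and the upper needs at least $k-i$. The tableaux with at least $k+1-i$ rows are admissible for both.
\item If $i>j$, the roles are swapped.
\item If $i=j$, both need at least $k+1-i$ rows, so the sets coincide outright. In that case $\mathcal E$ is empty for $(i,i)$, since a first column of length $k-i$ is not allowed.
\end{itemize}

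Define $\kappa$ on $\mathcal Q\setminus\mathcal E$ by
\[
(p,x,y,s)\mapsto\bigl(p,\sigma_{a',b'}^{-1}(\sigma_{a,b}(x,y)),s\bigr),
\]
where $(a,b)$ are the lower lengths and $(a',b')$ the upper lengths, or vice versa. This is well defined because, off $\mathcal E$, the first column has length exceeding $c$, so it lies in the image of both $\sigma_{a,b}$ and $\sigma_{a',b'}$. The map keeps $p$, $s$, content and $\dinv$, hence it preserves the key. It keeps the indices $(i,j)$ and exchanges $\mathrm{LOW}_{ij}$ with $\mathrm{UP}_{ij}$, so it reverses $\varepsilon$. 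It is an involution and has no fixed points.

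The occurrences in $\mathcal E$ are the middle pairs whose tableau has exactly $c$ rows. These exist only on one side:
\begin{itemize}
\item for $i<j$, only uppers, with sign $-(-1)^{i+j}$;
\item for $i>j$, only lowers, with sign $(-1)^{i+j}$.
\end{itemize}
Define $\pi(q)=(p,T,s)$. We check that $T$ has the column lengths required by the definition of a $pTs$ object:
\begin{itemize}
\item The first column has length $c=k-\min(i,j)$, as required.
\item The second column has length $|Y|-c=2k+1-i-j-c=k+1-\max(i,j)$, as required.
\item The $\dinv$ is correct because $\dinv(p:\mathrm{RR}(T):s)=h$ follows from $\dinv(\mathrm{RR}(T))=h'$ and the cross-term bookkeeping in the proof of Lemma~\ref{alt}.
\end{itemize}
The sign agrees with $\varepsilon(p,T,s)=(-1)^{i+j+\mathbf 1_{\{i<j\}}}$.

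The inverse of $\pi$ sends $(p,T,s)$ with $(|p|,|s|)=(i,j)$ to $\sigma^{-1}$ of $T$ at the appropriate side's lengths, placed between $p$ and $s$. It lands in $\mathrm{Low}_{i'j'}$ for the relevant block and so in $\mathrm{LOW}_{ij}$ or $\mathrm{UP}_{ij}$. This block membership holds because $\mathrm{LOW}_{ij}$ imposes no condition at the junctions, exactly as noted in Lemma~\ref{alt}.

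The main obstacle is the identification of the row-reading word with the middle pair's $\dinv$, that is, that $\sigma$ preserves $\dinv$ when it goes through $\mathrm{fw}$. This is handled by relying on Lemma~\ref{lem:mixed-cut}, which already asserts that $\dinv(T)=h$.

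A second technical point is that an element of $\mathcal Q$ lying in several $\mathrm{LOW}$'s is treated as separate occurrences. This is consistent, since $\kappa$ and $\pi$ act on each occurrence using its recorded $(i,j)$.
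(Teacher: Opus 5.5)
Your proposal is correct and follows the paper's argument: $\kappa$ applies $\sigma_{k-i,k+1-j}^{-1}\circ\sigma_{k+1-i,k-j}$ (or its inverse) to the middle pair with $p,s$ fixed, and $\pi$ applies the $\sigma$ of whichever side can produce exactly $c$ rows. Your row-count, column-length and sign verifications spell out what the paper leaves implicit. One small check that both you and the paper skip is that surjectivity of $\pi$ requires every $pTs$ object to have $\max(|p|,|s|)\le k$, so that $\pi^{-1}$ lands in some $\mathrm{LOW}_{ij}$ or $\mathrm{UP}_{ij}$ with $i,j\le k$; this holds because a dual $0:p$ or reverse dual $s:(-1)$ of length $k+1$ would contain an entry at least $2k+1>k+1\ge\max(X_0)$.
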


\begin{proof}

Let $\kappa$ be the map from $\mathrm{LOW}_{ij} \setminus \mathcal E$ to 
$\mathrm{UP}_{ij} \setminus \mathcal E $ defined by acting with $\sigma_{k-i,k+1-j}^{-1} \circ \sigma_{k+1-i,k-j}$ on the inner factors and fixing the outer factors.  The exclusion of pairs mapping under
$\sigma$ to tableaux with first column length $c$ ensures this is a bijection. It clearly reverses sign and preserves content and $\dinv$. 

Let $\pi$ be the map that maps elements of $\mathcal E$ to $pTs$ objects by acting with 
$\sigma_{k-i,k+1-j}$ if $i<j$ and with $\sigma_{k+1-i,k-j}$ if $i>j$ on the inner factors and preserving the outer factors.  (Note that no elements with $i=j$ exist in $\mathcal E$.)

\end{proof}

\subsection{Constructive Tableau Cancellation}

Retain the simple key and $k\ge2$. Write $\mathcal P_{uv}$ for
its $uv$-$pTs$ objects, using the same formula for $\varepsilon$
on every family.

\begin{lemma}
\label{lem:tableau-matchings}
For $j=0,1$, there are key-preserving, $\varepsilon$-reversing
involutions
\[
 \rho_j\text{ on }\mathcal P_{jj}\sqcup\mathcal P_{j,j+1},
 \qquad
 \lambda_j\text{ on }\mathcal P_{j,j+1}\sqcup\mathcal P_{j+1,j+1}.
\]
\end{lemma}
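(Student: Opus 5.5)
The involutions will be extracted from the proof of Lemma~\ref{lem:adjacent-pts-balance}, made fully explicit and paired with the flipping of Lemma~\ref{lem:flipped-pts-reduction}.

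\emph{The maps $\rho_j$.} These act on $\mathcal P_{jj}\sqcup\mathcal P_{j,j+1}$ for $j=0,1$. Here the pair $(u,v)$ of Lemma~\ref{lem:adjacent-pts-balance} is $(0,0)$ or $(1,1)$, and I will take $\rho_j$ to be exactly the insertion/marking involution described there. Concretely:
\begin{enumerate}
\item Keep $p$ and the last $v=j$ entries of $s$ fixed.
\item Reverse the remaining dual part of $s$ and dual Dyck insert it into $T$, as in \cite[Theorem~3.18]{Hawkes26}.
\item Mark boxes according to the two cases $|p|<|s|$ and $|p|>|s|$.
\item Reverse insert from the marked boxes, reverse the recovered word, and prepend it to the fixed suffix.
\end{enumerate}

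I then need to verify several properties. The output lies in $\mathcal P_{jj}\sqcup\mathcal P_{j,j+1}$: the new $s$ is reverse dual up to position $-v$ or $-(v+1)$, according to whether the fixed suffix was reverse dual or only reverse affine at its juncture. The sign flips. Applying the map twice is the identity. Content and $\dinv$ are preserved because insertion preserves the statistics of the reading word. These checks are precisely the ones already given in the proof of Lemma~\ref{lem:adjacent-pts-balance}. The well-definedness arguments there used $k\ge2$ and $v\le1$, and both hold here.

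\emph{The maps $\lambda_j$.} These act on $\mathcal P_{j,j+1}\sqcup\mathcal P_{j+1,j+1}$. Here the varying index is $u$ rather than $v$, so I will conjugate by the global $180^\circ$ rotation from Lemma~\ref{lem:flipped-pts-reduction}. Write $\Phi$ for the rotation combined with the explicit bijection $\chi_Y$ of Lemma~\ref{lem:dyck-rotation}. The bijection $\chi_Y$ replaces the non-explicit appeal to Lemma~\ref{lem:tableau-reversal}, so every step stays constructive.

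The map $\Phi$ sends $uv$-$pTs$ objects on $(X_0,h)$ to flipped $vu$-$pTs$ objects on $(X_0,h^*_{X_0})$. It reverses $\varepsilon$, and it applies $\chi_Y$ only to the tableau. On the flipped families, Lemma~\ref{lem:adjacent-pts-balance} supplies an involution $\tilde\rho$, for $(u,v)=(1,0),(2,1)$, built by the same insertion/marking rule with the flipped endpoint conditions. I will then set
\[
\lambda_j=\Phi^{-1}\circ\tilde\rho\circ\Phi .
\]
This is an involution because $\tilde\rho$ is one. It reverses $\varepsilon$ because $\Phi$ reverses $\varepsilon$, so it is reversed twice, and $\tilde\rho$ reverses it once more, giving a net reversal. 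It preserves the key because $\Phi$ sends $h\mapsto h^*$ and $\Phi^{-1}$ sends it back.

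\emph{Main obstacle.} The delicate point is that $\chi_Y$ must be applied to the tableau with the correct $Y$, namely the content of $T$ alone. The $\dinv$ complement must then match the global reversal, meaning that $h^*_{X_0}$ decomposes compatibly through the cross terms $C(p,Y)$, $C(Y,s)$ and $C(p,s)$. I would verify this by the same bookkeeping used for $h'$ in Lemma~\ref{alt}: reversing a concatenation complements each block separately and complements the cross pairs. A second point to check is that the flipped insertion never fails, in analogy with the two failure cases treated in Lemma~\ref{lem:adjacent-pts-balance}. This uses $|s|\ge k\ge2$ in the same way and should go through verbatim.
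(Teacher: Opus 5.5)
Your proposal is correct and is essentially the paper's proof. The paper takes $\rho_j$ directly from Lemma~\ref{lem:adjacent-pts-balance} and sets $\lambda_j=F^{-1}\circ\widetilde\rho_j\circ F$ with $F(p,T,s)=(s^{\mathrm{rev}},\chi_Y(T),p^{\mathrm{rev}})$, so $\varepsilon$ is reversed three times and $\dinv$ is complemented twice. One clarification on your $\Phi$: in $F$ the tableau is not rotated. Instead $\chi_Y$ is applied to $T$ directly and preserves its shape, which depends only on $\min(|p|,|s|)$ and $\max(|p|,|s|)$ and so is unchanged by the swap; this is what makes the map well defined.
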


\begin{proof}
Take $\rho_j$ from Lemma~\ref{lem:adjacent-pts-balance}.
For a triple with middle content $Y$, put
\[
 F(p,T,s)=
 \bigl(s^{\mathrm{rev}},
      \chi_Y(T),
       p^{\mathrm{rev}}\bigr).
\]
By Lemma~\ref{lem:dyck-rotation}, this is a bijection from
$\mathcal P_{uv}$ to the flipped $vu$ family at
$h_{X_0}^*$. It reverses $\varepsilon$, since the outer lengths
are exchanged; its inverse uses
$\chi_Y^{-1}$ on the tableau.
Let $\widetilde\rho_j$ be the involution of
Lemma~\ref{lem:adjacent-pts-balance} on the flipped
$(j+1)j$ and $(j+1)(j+1)$ families, and set
$\lambda_j=F^{-1}\circ\widetilde\rho_j\circ F$.
Content is preserved,
$\dinv$ is complemented twice, and $\varepsilon$ is reversed
three times.
\end{proof}

\subsection{The Lower--Upper Bijection}

For a key $(P,X,h)$, write $\mathcal L(P,X,h)$ and
$\mathcal U(P,X,h)$ for the sets of associated lowers and uppers.

\begin{theorem}
\label{thm:lower-upper-bijection}
For every key $(P,X,h)$ satisfying \eqref{eq:weak-wall} and
\eqref{eq:strict-wall}, there are explicit inverse bijections
\[
\operatorname{up}:\mathcal L(P,X,h)\longrightarrow\mathcal U(P,X,h),
\qquad
\operatorname{down}:\mathcal U(P,X,h)\longrightarrow\mathcal L(P,X,h).
\]
\end{theorem}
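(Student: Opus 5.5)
The plan is to follow the proof of Theorem~\ref{thm:flat-middle} step by step. At each point where that proof uses equality of cardinalities, I substitute an explicit bijection built from the constructions of Section~\ref{sec:bijection}. Whenever a bijection exists only "modulo" a family of matched objects, I glue the pieces with finite cancellation. First I dispatch the key $(P,X,h)$ by the case split of Lemma~\ref{lem:simple-key-reduction}. By Lemma~\ref{lem:reduction-bounds} either $P[-1]=m-1$ with $\max(X)\le m$, or $P[-1]=m$.

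\emph{Case $P[-1]=m-1$.} Both endpoint conditions are vacuous, so a lower (resp.\ upper) on the key is an affine/reverse affine pair of lengths $(k+1,k)$ (resp.\ $(k,k+1)$) with content $X$ and dinv $h$. Here I take $\operatorname{up}=\omega$ from Lemma~\ref{lem:mixed-cut} and $\operatorname{down}=\omega^{-1}$, leaving $P$ fixed.

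\emph{Case $P[-1]=m$.} I subtract $m$ from every entry of $X$, which passes to the simple key $(X_0,h)$ with $2h>\beta(X_0)$ by Lemma~\ref{lem:staircase-reduction}. This translation is a bijection on the lowers and uppers, so it suffices to build $\operatorname{up}$ on simple keys. If $\max(X_0)\le 0$, the endpoint conditions are again automatic, and I use $\omega$ once more. For $k=0,1$ I use the explicit word-level analysis of Section~3.1. Every word other than $(0,1,-1)$ and $(-1,1,0)$ is simultaneously a simple lower and a simple upper, or neither, so $\operatorname{up}$ is the identity on words there, together with $(0,1,-1)\mapsto(-1,1,0)$.

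For $k\ge2$, Lemma~\ref{ineq} gives a three-way split, each branch decidable from $X_0$ and $h$ alone. If the key supports no simple lower or upper, both sets are empty. Otherwise, consider the signed set $\mathcal Q$. On it I have the sign-reversing matching $\eta$ of Lemma~\ref{lem:outer-cut-matching}, whose unmatched elements are exactly $\mathrm{Low}_{00}\sqcup\mathrm{Up}_{00}$. I also have the matching $\kappa$ on $\mathcal Q\setminus\mathcal E$ together with the sign-preserving bijection $\pi:\mathcal E\to\{pTs\}$ of Lemma~\ref{lem:middle-cut-matching}.

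If the key supports no $pTs$ objects, then $\mathcal E=\varnothing$, and $\kappa$ is a complete sign-reversing involution on $\mathcal Q$. I then apply the involution-principle form of finite cancellation to the pair $(\eta,\kappa)$. Starting at a lower $q\in\mathrm{Low}_{00}$, I alternately apply $\kappa$ and $\eta$ until I land in the fixed-point set of $\eta$. Sign bookkeeping forces the terminal point to be an upper, and this terminal point is $\operatorname{up}(q)$. Reversing the order of the alternation gives $\operatorname{down}$.

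If instead the key supports no $22$-$pTs$ objects, I first build a complete sign-reversing involution $\tau$ on the $pTs$ objects. I do this by alternating the involutions of Lemma~\ref{lem:tableau-matchings} along the chain
\[
\mathcal P_{00}\to\mathcal P_{01}\to\mathcal P_{11}\to\mathcal P_{12}\to\mathcal P_{22},
\]
using $\rho_0,\lambda_0,\rho_1,\lambda_1$ in turn. Each of these glues adjacent families, with $\rho_j$ matching $\mathcal P_{jj}$ with $\mathcal P_{j,j+1}$ and $\lambda_j$ matching $\mathcal P_{j,j+1}$ with $\mathcal P_{j+1,j+1}$. Iterated finite cancellation then pushes any unmatched object further along the chain. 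Since $\mathcal P_{22}=\varnothing$, nothing survives, and the result is a fixed-point-free sign-reversing involution $\tau$ on $\mathcal P_{00}$. I transport it to $\mathcal E$ via $\pi^{-1}\tau\pi$ and combine it with $\kappa$ to obtain a complete sign-reversing involution on $\mathcal Q$. Finally I cancel it against $\eta$ exactly as in the previous branch. The two maps produced in every branch are each other's inverses, because each is a finite-cancellation map $\operatorname{Cancel}(f,g)$ and the inverse is $\operatorname{Cancel}(f^{-1},g^{-1})$.

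The main obstacle is the chain step. The involutions $\rho_j$ and $\lambda_j$ each act on the union of two adjacent families but overlap on the middle one, so turning them into a single involution on $\mathcal P_{00}$ requires carefully setting up the $f,g$ data for finite cancellation at each link. I expect to do this by induction along the chain. At the link ending in $\mathcal P_{j+1,j+1}$, I take $A=\mathcal P_{jj}^{+}$, $B=\mathcal P_{jj}^{-}$, and $C,D$ the signed parts of the later families. The map $g$ is the inductively built involution on the tail of the chain. I must also check that termination and sign reversal hold, which follows from the finiteness of each family and from each of $\rho_j$ and $\lambda_j$ reversing $\varepsilon$.
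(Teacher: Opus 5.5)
Your proposal is correct and is essentially the paper's proof. The case split is the same: $\omega$ when the endpoint conditions are vacuous, the word-level map for $k=1$, and for $k\ge2$ the three-way split from Lemma~\ref{ineq} using $\eta,\kappa,\pi,\rho_j,\lambda_j$. The paper instead runs a single cancellation on $\mathcal V=\mathcal Q\sqcup\mathcal P_{00}\sqcup\cdots\sqcup\mathcal P_{22}$ with $M_1=\kappa\sqcup\widehat\pi\sqcup\lambda_0\sqcup\lambda_1$ and $M_2=\eta\sqcup\rho_0\sqcup\rho_1$. Its excursions into the $\mathcal P$ families are exactly your $\pi^{-1}\tau\pi$, so both constructions give the same bijection. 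Your chain step also needs no induction: cancel $\rho_0\sqcup\rho_1$, which is complete once $\mathcal P_{22}=\varnothing$, against $\lambda_0\sqcup\lambda_1$, whose unmatched set is exactly $\mathcal P_{00}$. That single cancellation already yields $\tau$.
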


\begin{proof}
Fix a key $(P,X,h)$ satisfying \eqref{eq:weak-wall} and
\eqref{eq:strict-wall}. Set $m = \max(P)$.

If $m>P[-1]$, then Lemma~\ref{lem:reduction-bounds} gives
\[
m=P[-1]+1,\qquad \max(X)\le P[-1]+1.
\]
Both endpoint conditions are therefore automatic. Apply
$\omega$ from Lemma~\ref{lem:mixed-cut} to $(A,B)$, retaining
$P$, to define $\operatorname{up}$; applying $\omega^{-1}$
defines $\operatorname{down}$.

Now assume $m=P[-1]$, and let $X_0=X-m$.
By Lemma~\ref{lem:staircase-reduction} and the translation in
Section~2.3, we have $2h>\mathcal B(S_m,X)=\beta(X_0)$.
Any bijection between the simple lowers and uppers on $(X_0,h)$
lifts to $(P,X,h)$ by subtracting $m$ from both factors, applying
the bijection, and adding $m$ back, with $P$ unchanged. It remains to construct these
simple-key bijections. Write $|X_0|=2k+1$.

If $\max(X_0)\le0$, both endpoint conditions in the definitions
of simple lowers and simple uppers are automatic. Use the
bijection $\omega$ of Lemma~\ref{lem:mixed-cut}, with inverse
$\omega^{-1}$.

Now suppose $\max(X_0)\ge1$. By Section~3.1, the case $k=0$ is vacuous,
and for $k=1$, that section gives an implicit bijection: outside of the simple lower $\bigl((0,1),(-1)\bigr)$ and simple 
upper $\bigl((-1),(1,0)\bigr)$ (which are matched) the map retains the underlying word and moves the middle 
entry between the left and right factors.

It remains to treat a simple key supporting at least one simple
lower or upper, with $k\ge2$ and $2h>\beta(X_0)$.
Use $\mathcal Q$, $\mathcal E$, $\eta$, $\kappa$, and $\pi$
from Lemmas~\ref{lem:outer-cut-matching} and
\ref{lem:middle-cut-matching}, and use the families
$\mathcal P_{uv}$ and involutions $\rho_j,\lambda_j$ from
Lemma~\ref{lem:tableau-matchings}. In particular,
$\mathcal P_{00}$ is the set of $pTs$ objects for this key.

Form the finite set
\[
\mathcal V=
\mathcal Q\sqcup\mathcal P_{00}\sqcup\mathcal P_{01}
\sqcup\mathcal P_{11}\sqcup\mathcal P_{12}\sqcup\mathcal P_{22}.
\]
 Give $\mathcal V$ the sign
\[
\widehat\varepsilon(z)=
\begin{cases}
\varepsilon(z),&z\in\mathcal Q,\\
-\varepsilon(z),&z\in\mathcal V\setminus\mathcal Q.
\end{cases}
\]

The bijection
$\pi:\mathcal E\longrightarrow\mathcal P_{00}$
becomes a matching $\widehat\pi$ on
$\mathcal E\sqcup\mathcal P_{00}$ by pairing each
$e\in\mathcal E$ with $\pi(e)$. Since $\pi$ preserves
$\varepsilon$, this matching reverses $\widehat\varepsilon$.
Define two matchings on $\mathcal V$ by
\[
\begin{aligned}
M_1&=\kappa\sqcup\widehat\pi\sqcup\lambda_0\sqcup\lambda_1,\\
M_2&=\eta\sqcup\rho_0\sqcup\rho_1.
\end{aligned}
\]
Both matchings reverse $\widehat\varepsilon$.

If $\mathcal P_{00}=\varnothing$,
then $\mathcal E=\varnothing$ by the bijection $\pi$. Thus
$\kappa$ is complete on $\mathcal Q$, while $\eta$ leaves
precisely the simple lowers and uppers unmatched. Finite
cancellation with $\kappa$ and $\eta$ gives a sign changing involution on $\mathrm{Low}_{00} \sqcup \mathrm{Up}_{00}$ which must be the desired
bijection.

Now suppose $\mathcal P_{00}\ne\varnothing$. This, the assumption that $(X_0,h)$ supports at least one simple lower or
upper, and Lemma~\ref{ineq} imply
$\mathcal P_{22}=\varnothing$. In this case the matching $M_1$ is complete
on $\mathcal V$, and $M_2$ leaves precisely the simple lowers
and uppers unmatched. Finite cancellation with $M_1$ and $M_2$
therefore gives a sign changing involution on $\mathrm{Low}_{00} \sqcup \mathrm{Up}_{00}$ which must be the desired
bijection.

\end{proof}

\subsection{An Area and Dinv Interchanging Involution}

We now give the main theorem. Set
\[
 \theta=\psi^{-1}\circ\operatorname{up}\circ\phi,
 \qquad
 \theta^{-1}=\phi^{-1}\circ\operatorname{down}\circ\psi.
\]
Set $\mathcal I(D)=D$
if $n\le1$ and otherwise
 $\mathcal I(D)=\theta^{\dinv(D)-\area(D)}(D)$,
where $\theta^0$ is the identity and negative powers denote
iterates of $\theta^{-1}$.

\begin{theorem}
\label{thm:area-dinv-involution}
The map $\mathcal I$ is an involution on Dyck paths satisfying
$\defc\le\min(\area,\dinv)$ that interchanges area and dinv.
\end{theorem}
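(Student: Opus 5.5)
We would prove Theorem~\ref{thm:area-dinv-involution} by showing that $\theta$ is a well-defined bijection between Dyck paths of deficit $d$ and area $a$ and those of deficit $d$ and area $a+1$, for every $a$ with $d\le a<M-2d$. The main theorem then follows by a simple index computation along each ``string'' of paths.

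\emph{Properties of $\theta$.} Let $D$ satisfy $\defc(D)=d$ and $\area(D)=a\in[d,M-2d)$. By Lemma~\ref{lem:full-skeleton-statistics}, a Full Dyck Skeleton has $\area\le\defc$, and $a<M-2d$ keeps us in the lower region; so $\phi(D)$ is a lower with the same area and dinv. Its key $(P,X,h)$ then satisfies \eqref{eq:weak-wall} and \eqref{eq:strict-wall}, by the substitution at the start of Section~2. Theorem~\ref{thm:lower-upper-bijection} sends this lower to an upper on the same key. An upper on the same key with $|A|=k$ has area $\sum P+\sum X+k+1$, one more than the lower, and it has the same $\defc$, because $h$ and the key are unchanged. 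Finally, $\psi^{-1}$ returns a Dyck path that is not a full skeleton, with area $a+1$ and deficit $d$. The inverse $\theta^{-1}=\phi^{-1}\circ\operatorname{down}\circ\psi$ is defined on paths of area in $(d,M-2d]$. It avoids full skeletons since their area is at most $\defc<d+1$, so $\psi$ is defined there. Thus for fixed $d$, $\theta$ is a bijection from level $a$ to level $a+1$ whenever $a\in[d,M-2d-1]$, and the composite powers are well-defined on $[d,M-2d]$.

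\emph{The index computation.} Note that $\dinv=M-d-\area$, so $\defc\le\min(\area,\dinv)$ is equivalent to $a\in[d,M-2d]$. This interval is symmetric under $a\mapsto M-d-a$. Set $t=\dinv(D)-\area(D)=M-d-2a$. Then $a+t=M-d-a$ lies in $[d,M-2d]$, so $\theta^{t}(D)$ is defined: we apply $\theta$ or $\theta^{-1}$ $|t|$ times and stay inside the interval at each step. The image has area $a+t=\dinv(D)$ and deficit $d$, hence dinv equal to $\area(D)$, so $\mathcal I$ interchanges area and dinv. Applying $\mathcal I$ again uses the exponent $\dinv-\area$ of the image, which is $-t$. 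Hence $\mathcal I^2(D)=\theta^{-t}\theta^{t}(D)=D$, and $\mathcal I$ is an involution. The case $n\le1$ is trivial.

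\emph{Main obstacle.} The delicate point is checking that the hypotheses of Theorem~\ref{thm:lower-upper-bijection} really hold at every intermediate step. This means verifying that $a\in[d,M-2d)$ yields exactly \eqref{eq:weak-wall} and \eqref{eq:strict-wall} for the key of $\phi(D)$, and the same for $\psi(D')$ when $\theta^{-1}$ is applied. I would redo the substitution $a=\sum P+\sum X+k$, $d=M-\sum P-\sum X-\dinv(P)-C(P,X)-h$ carefully, confirming that $a\ge d$ is \eqref{eq:weak-wall} and $a<M-2d$ is \eqref{eq:strict-wall}. I would also check that $\dinv(P:A:B)$ decomposes as $\dinv(P)+C(P,X)+\dinv(A:B)$, using the endpoint conditions. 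The same computation applies to uppers with area shifted by one, so the key conditions agree on both sides.
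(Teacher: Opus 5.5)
Your proposal is correct and follows the paper's proof. You show that $\theta$ bijects deficit-$d$ paths of area $a$ onto those of area $a+1$ for $a\in[d,M-2d)$, using Theorem~\ref{thm:lower-upper-bijection} on keys satisfying \eqref{eq:weak-wall} and \eqref{eq:strict-wall} and Lemma~\ref{lem:full-skeleton-statistics} to exclude full skeletons on the $\psi$ side; then $\theta^{\dinv-\area}$ moves area $a$ to $M-d-a$ inside the symmetric interval $[d,M-2d]$, with the image carrying the opposite exponent. The extra checks you list are details the paper leaves implicit; one minor slip is that the $C(P,X)$ term in $\dinv(P:A:B)$ comes simply from $P$ preceding $A:B$, not from the endpoint conditions.
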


\begin{proof}
Fix a semilength $n$ and a deficit, $d$.  It suffices to show that $\mathcal I$ is an involution on the set of 
Dyck sequences with $\defc$ $d$ and $\area \in [d, \binom n2-2d]$ that sends $\area$ to $\binom n2 -d - \area$.

Theorem~\ref{thm:lower-upper-bijection} and Lemma~\ref{lem:full-skeleton-statistics} show that $\theta$ is a bijection from $\defc$ $d$
sequences that have $\area a$ to $\defc$ $d$ sequences that have $\area$ $a+1$ for $a \in [d, \binom n2-2d)$.

Therefore applying $\theta$  $(\binom n2 -d -\area)-\area$ times (with negative values corresponding to applications of $\theta^{-1}$) sends $\area$ to $\binom n2 - d - \area$ and is clearly an involution. 
\end{proof}

\section{\texorpdfstring{The $\defc\le n-3$ case}
                       {The defc <= n-3 case}}
\label{sec:full-involution}

We now extend $\mathcal I$ to all Dyck paths with $\defc\le n-3$.
\subsection{Low Strings}
\begin{lemma}
\label{lem:deficit-pairs}
For a Dyck sequence $(x_1,\ldots,x_n)$, deficit counts pairs $i<j$ such that $x_i>x_j+1$, or $x_i<x_j$ where $x_i$ is not the first occurrence of its value.
\end{lemma}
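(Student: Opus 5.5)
We prove the statement by counting complements. Here $\defc=\binom n2-\area-\dinv$, and $\binom n2$ counts all pairs $i<j$. Each pair is of exactly one of three kinds: a dinv pair ($x_i-x_j\in\{0,1\}$), a pair that is not dinv, or a pair we charge to area. It therefore suffices to show
\[
 \area(x)=\#\{(i,j):i<j,\ x_i<x_j,\ x_i\text{ is the first occurrence of its value}\},
\]
because the pairs $i<j$ that are not dinv are exactly those with $x_i>x_j+1$ or $x_i<x_j$. Removing from them the pairs counted by area leaves exactly the pairs described in the statement.

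\textbf{Step 1: decompose area by value.} Write $\area(x)=\sum_j x_j$. For each position $j$ with $x_j=v>0$, we want to find exactly $v$ indices $i<j$ with $x_i<v$ such that $x_i$ is a first occurrence. The natural candidates are the first occurrences of the values $0,1,\ldots,v-1$. One such $i$ exists for each value $u<v$, and the map from pairs to $(j,u)$ is injective. Summing $v$ over all $j$ then gives the area.

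\textbf{Step 2: locate the first occurrences.} We show that in a Dyck sequence the first occurrence of each value $u<x_j$ lies to the left of position $j$, and that no other first occurrences of smaller values exist. The sequence starts at $0$ and can increase by at most $1$ per step ($x_{i+1}\le x_i+1$). So before $x_j=v$ is reached, every value $0,\ldots,v-1$ must already have appeared, since the sequence climbs through each intermediate value. Their first occurrences are therefore at indices $i<j$. Conversely, any first occurrence $x_i$ with $i<j$ and $x_i<x_j$ is the first occurrence of some value in $\{0,\ldots,v-1\}$, and each such value has only one first occurrence. Hence the pairs $(i,j)$ with $x_i<x_j$ and $x_i$ a first occurrence are in bijection with the pairs $(j,u)$, $0\le u<x_j$, and their number is $\sum_j x_j=\area(x)$.

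\textbf{Step 3: conclude.} We have
\[
\defc(x)=\binom n2-\dinv(x)-\area(x)=\#\{\text{non-dinv pairs}\}-\#\{\text{area pairs}\}.
\]
The area pairs form a subset of the non-dinv pairs of type $x_i<x_j$, so this difference is exactly the set in the statement. There is no real obstacle. The only point needing care is the climbing argument in Step 2: it uses the affine condition and $x_1=0$, which together guarantee that all smaller values appear before $x_j$.
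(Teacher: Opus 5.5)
Your proof is correct and follows essentially the same route as the paper's: the pairs (first occurrence of $u$, position $j$) with $0\le u<x_j$ account for the area, the dinv pairs account for dinv, and the remaining pairs are exactly those with $x_i>x_j+1$, or with $x_i<x_j$ and $x_i$ not a first occurrence. You also spell out the climbing argument (from $x_1=0$ and $x_{i+1}\le x_i+1$) showing that these first occurrences lie to the left of $j$, a step the paper leaves implicit.
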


\begin{proof}
For each $j$, pair $x_j$ with the first occurrence of each value $0,\ldots,x_j-1$. These pairs account for the $\area$, regular $\dinv$ pairs for the $\dinv$ and all remaining pairs, which are those stated above, account for the $\defc$.
\end{proof}

\begin{lemma}
\label{lem:small-area-entries}
If $\area(D)\le\defc(D)\le n-3$, then every entry of $D$ is at most $2$, and its last entry is at most $1$.
\end{lemma}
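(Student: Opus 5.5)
The plan is to argue by contradiction using the pair description of deficit in Lemma~\ref{lem:deficit-pairs}, together with the hypothesis $\area(D)\le\defc(D)$ to absorb the configurations that produce few deficit pairs. Write $D=(x_1,\ldots,x_n)$, and call an entry a \emph{first occurrence} if it is the leftmost entry of its value. Since $D$ is a Dyck sequence, any entry of value $v$ is preceded by first occurrences of $0,1,\ldots,v-1$. Hence a large entry forces many deficit pairs, namely one with every entry that is neither one of these few first occurrences nor itself large.

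For the first claim, suppose some entry is at least $3$, and let $x_j=3$ be the first occurrence of $3$; it exists because values rise by at most one. Every other entry $x_i$ falls into one of the following classes.
\begin{enumerate}
\item $i<j$ and $x_i$ is not a first occurrence. Then $x_i<3$, so $(x_i,x_j)$ is a deficit pair of the second kind.
\item $i>j$ and $x_i\le1$. Then $3>x_i+1$, so $(x_j,x_i)$ is a deficit pair of the first kind.
\item $x_i$ is one of the first occurrences of $0,1,2$; these are the only entries to the left of $x_j$ not covered by the first class.
\item $i>j$ and $x_i\ge2$.
\end{enumerate}
Let $r$ be the number of entries in the fourth class. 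The first two classes give $\defc(D)\ge n-4-r$. Meanwhile the fourth class contributes at least $2r$ to the area, and $1+2+3$ adds $6$ more, so
\[
\defc(D)\ \ge\ \area(D)\ \ge\ 6+2r .
\]
If $r\le 0$, the first bound gives $\defc(D)\ge n-4$, so I still need one more deficit pair. If $r\ge1$, I need extra pairs coming from the fourth class itself. For example, an entry of value $\ge2$ after $x_j$ forms a deficit pair with every earlier entry of value at most its own value minus two, and with every later entry of that kind; it also pairs with the non-first occurrences of smaller value among the second occurrences of $0,1$, which the Dyck condition forces around the $3$.

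The main obstacle is closing this counting gap: showing $\defc(D)\ge n-2$, and I expect a careful case split on $r$ and on whether the $3$ is the last entry. My first attempt would be to combine the two inequalities as $2\defc(D)\ge (n-4-r)+(6+2r)=n+2+r$. This gives $\defc(D)\ge (n+2)/2$, which is not enough on its own. So I would look for a charging scheme in which each fourth-class entry $y$ is charged its own deficit pair, for instance with the entry $x_j=3$ whenever $y\ge5$, or with a first-class entry of value $0$, whose existence I would try to force.

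For the second claim, suppose the last entry is $x_n=v\ge2$. Every earlier entry that is not a first occurrence of $0,\ldots,v-1$ either has value $<v$, and then gives a second-kind deficit pair with $x_n$, or has value $\ge v$. An earlier entry of value $\ge v+2$ gives a first-kind deficit pair with $x_n$. This leaves only the entries of value $v$ or $v+1$, which each carry area at least $2$, so the same area-versus-deficit trade-off as above is used. Since the first part already gives $v=2$, the entries in question have values $2$ and $3$, and by the first part they have value exactly $2$. The remaining case is then a short direct count.
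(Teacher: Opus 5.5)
There is a genuine gap: the counting step that would give the contradiction is never carried out. Your four classes yield only $\defc(D)\ge n-4-r$. Combining this with $\defc(D)\ge\area(D)\ge 6+2r$ cannot contradict $\defc(D)\le n-3$, as you observe yourself. Even for $r=0$ you need two further pairs, not one, to exceed $n-3$.

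The charging ideas you propose to close the gap fail as stated.
\begin{itemize}
\item A later entry $y\ge5$ does not form a deficit pair with $x_j=3$. Here $x_j$ precedes $y$, $3<y$, and $x_j$ is the first occurrence of $3$, so by Lemma~\ref{lem:deficit-pairs} this is an area pair. In fact no class-4 entry forms a deficit pair with $x_j$; that is why it is in class 4.
\item An earlier entry of value at most $y-2$ gives a deficit pair with $y$ only if it is not a first occurrence, so e.g.\ the first $0$ never does.
\item The Dyck condition does not force second occurrences of $0$ or $1$: the sequence may begin $0,1,2,3$. Repeated small values can only come from the hypothesis $\area(D)\le\defc(D)$, which you use only in the weak form $\area(D)\ge 6+2r$.
\end{itemize}
The second claim is likewise left as an unspecified ``short direct count''.

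The missing idea is to extract zeros from the area hypothesis and pair large entries with a non-first zero.
\begin{itemize}
\item \emph{Zeros.} Every nonzero entry contributes at least $1$ to the area, and the first $2$ and first $3$ contribute $3$ more. So $n-z+3\le\area(D)\le n-3$ forces $z\ge6$ zeros. A final $2$ similarly forces $z\ge4$.
\item \emph{Key pairing.} An entry of value $\ge2$ forms a deficit pair with the second zero in either order. If the entry comes first, it exceeds $0+1$. If it comes later, the zero is a non-first occurrence of a smaller value.
\item \emph{First claim.} In your notation, keep classes 1 and 2 paired with $x_j$. Additionally pair every class-4 entry and the first $2$ with the second zero. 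This gives $(n-4-r)+r+1=n-3$ distinct deficit pairs. The third zero together with the first $2$ is one more, so $\defc(D)\ge n-2$.
\item \emph{Second claim.} Suppose $x_n=2$, with all entries at most $2$ by the first part. Pair the non-first entries of value $\le1$ with $x_n$, and every other $2$ with the second zero; this gives $n-3$ pairs. Add the pair formed by the third zero and the last $1$ before $x_n$. That $1$ lies after every zero, since the sequence must pass through $1$ to reach $2$.
\end{itemize}
This is exactly the paper's argument. The paper treats both claims at once by letting $e$ be either an occurrence of $3$ or a final $2$.
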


\begin{proof}
An entry above $2$ forces an occurrence of $3$. Let $e$ be either such an occurrence or a final $2$. The area bound implies at least four zeros. Pair each entry other than the first two zeros and the first $1$ as follows: entries at least $2$ with the second zero, and all others with $e$. These are $n-3$ distinct deficit pairs. If $e=3$, pair the third zero with an occurrence of $2$; if $e=2$, pair it with the last $1$ before $e$. This adds another deficit pair, a contradiction.
\end{proof}

\begin{corollary}
\label{cor:binary-full-skeletons}
A full Dyck skeleton with $\defc\le n-3$ has only zeros and ones.
\end{corollary}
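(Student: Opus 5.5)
The plan is to combine Lemma~\ref{lem:full-skeleton-statistics} with Lemma~\ref{lem:small-area-entries}. Let $D$ be a Full Dyck Skeleton of length $n$ with $\defc(D)\le n-3$. First I will show that $\area(D)\le\defc(D)$. Lemma~\ref{lem:full-skeleton-statistics} gives $\dinv(D)+2\area(D)\le\binom n2$. Rewriting $\binom n2=\area(D)+\dinv(D)+\defc(D)$, this becomes $\area(D)\le\defc(D)$, as the paper already observed after the bijections of Section~1. So the hypothesis $\area(D)\le\defc(D)\le n-3$ of Lemma~\ref{lem:small-area-entries} holds. That lemma then says every entry of $D$ is at most $2$ and the last entry is at most $1$.

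It remains to rule out entries equal to $2$. Suppose $D$ contains a $2$. By the first part of Lemma~\ref{lem:full-skeleton-statistics}, every $2$ has at least two $1$'s and at least two $0$'s to its left. Since the last entry is not $2$ and all entries are at most $2$, some element follows the rightmost $2$, and its value is $0$ or $1$. I would redo the counting argument of Lemma~\ref{lem:small-area-entries} with $e$ taken to be this rightmost $2$ rather than a final $2$, using the deficit pairs described in Lemma~\ref{lem:deficit-pairs}.

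The proposed pairing is as follows. Every entry after $e$ that is $\le 0$ pairs with $e$, because $2>x+1$ forces $x\le0$. A $1$ after $e$ instead needs a non-first $0$ or $1$ before it, and the second $0$ serves. Entries before $e$ pair with $e$ whenever they are a non-first occurrence of $0$ or $1$. The aim is to show these choices give at least $n-2$ distinct deficit pairs, which contradicts $\defc\le n-3$.

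This counting step is where I expect the difficulty. The slack is small, and I would try to find the extra pair as in Lemma~\ref{lem:small-area-entries}, namely the third zero paired with an earlier $1$ or $2$. If the third zero is not available, then the area bound $\area\le\defc$ forces few large entries, and I would handle those short cases directly. Alternatively, one could try to show that a $2$ followed later by smaller entries gives an extractable element, contradicting fullness. Failing both, I would use the area bound more aggressively: each $2$ contributes $2$ to the area, and that area has to be matched by deficit.
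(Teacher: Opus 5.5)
There is a genuine gap. Your reduction to entries at most $2$ via Lemma~\ref{lem:small-area-entries} is valid, but you never produce the $n-2$ deficit pairs, and the pairing you describe cannot produce them. Take $e$ to be the rightmost $2$. Your scheme pairs every entry exactly once except the first $0$, the first $1$, and the $c_2\ge 1$ entries equal to $2$, so it gives only $n-2-c_2\le n-3$ pairs.

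Your proposed sources of an extra pair also fail. An earlier $1$ followed by a $0$ is a dinv pair, not a deficit pair. When $e$ is the only $2$, pairing a zero with an earlier $2$ just repeats a pair you already counted. The idea of finding an extractable element cannot work either: $(0,0,1,1,2,0)$ is a full skeleton containing a $2$. It simply has deficit $5>n-3$.

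The missing observation is that a full skeleton begins $(0,0,\ldots)$. By the first part of Lemma~\ref{lem:full-skeleton-statistics}, a $1$ in position $2$ would need two zeros before it. So the second zero precedes every nonzero entry, and it is a non-first occurrence of $0$. By Lemma~\ref{lem:deficit-pairs} it therefore forms a deficit pair with \emph{every} nonzero entry, including the first $1$ and all the $2$'s. You used the second zero only for the $1$'s after $e$.

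Now fix any $2$, at position $p$. Every zero at position $i\ge3$ forms a deficit pair with it: if $i<p$, use $0<2$ with the zero a non-first occurrence; if $i>p$, use $2>0+1$. Only the first family contains position $2$, so the two families are disjoint. Together they give $(\#\text{nonzero entries})+(\#\text{zeros}-2)=n-2$ deficit pairs, contradicting $\defc\le n-3$.

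This is the paper's argument. It needs neither the area bound nor Lemma~\ref{lem:small-area-entries}, since any entry at least $2$ already forces a $2$ to occur.
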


\begin{proof}
By Lemma~\ref{lem:full-skeleton-statistics}, its area is at most its deficit, and it begins $(0,0,\ldots)$. If a $2$ occurred, the second zero would pair with every nonzero entry, and every subsequent zero with that $2$, giving $n-2$ deficit pairs.
\end{proof}

\begin{corollary}
\label{cor:skeleton-partitions}
Full Dyck skeletons of length $n$ and deficit $d\le n-3$ are in bijection with partitions $\mu\vdash d$, sending area to the number of parts $\ell(\mu)$.
\end{corollary}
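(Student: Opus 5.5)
By Corollary~\ref{cor:binary-full-skeletons}, every full Dyck skeleton $D$ of length $n$ and deficit $d\le n-3$ is a $0/1$ word beginning with $0$. The plan is to describe these words exactly and then read off a partition from the positions of their ones.

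\textbf{Step 1 (the skeleton condition).} Every $0/1$ word starting with $0$ is a Dyck sequence, so the only constraint is the absence of extractable elements. In a $0/1$ word, a $0$ is never extractable. A $1$ can be extractable only if it is the leftmost $1$, and then the condition ``the next entry is not $2$'' holds automatically. So the word is a full Dyck skeleton exactly when it has no $1$ at all, or when its first $1$ is preceded by at least two zeros.

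\textbf{Step 2 (the statistics).} Let the ones be ordered left to right, and let $z_1\le z_2\le\cdots\le z_\ell$ be the numbers of zeros to the left of each one. Then $\area(D)=\ell$. For deficit we use Lemma~\ref{lem:deficit-pairs}. Pairs with $x_i>x_j+1$ cannot occur in a $0/1$ word. The remaining deficit pairs are pairs consisting of a non-first $0$ followed by a $1$, so the $r$-th one contributes $z_r-1$ and
\[
\defc(D)=\sum_{r=1}^{\ell}(z_r-1).
\]

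\textbf{Step 3 (the bijection).} Send $D$ to $\mu=(z_\ell-1,\ldots,z_1-1)$. By Step~1 we have $z_1\ge2$, so every part is positive, $\mu$ is a partition of $d$, and $\ell(\mu)=\ell=\area(D)$. The word of all zeros goes to the empty partition.

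For the inverse, take $\mu\vdash d$ with $\ell$ parts. Form the word with $n-\ell$ zeros, placing the ones so that the $r$-th one from the left has exactly $\mu_{\ell+1-r}+1$ zeros before it. This determines the word uniquely. It starts with $0$, and by Step~1 it is a full skeleton, since its first one has $\mu_\ell+1\ge2$ zeros before it.

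\textbf{Step 4 (feasibility).} The inverse is well defined only if there are enough zeros, namely $\mu_1+1\le n-\ell$. This is the one point where the hypothesis $d\le n-3$ enters, and I expect it to be the only real check. Since $\mu$ has $\ell$ positive parts,
\[
\mu_1+\ell-1\le|\mu|=d\le n-3,
\]
so $\mu_1+1\le n-\ell-1<n-\ell$. The two maps are clearly mutually inverse, which gives the bijection with area sent to $\ell(\mu)$.
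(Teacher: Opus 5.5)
Your proof is correct and follows essentially the same route as the paper. Both reduce to $0/1$ words via Corollary~\ref{cor:binary-full-skeletons}, send each one to the number of zeros before it other than the initial zero, and invert by placing the $r$-th one after $\mu_{\ell+1-r}+1$ zeros. You also justify the deficit formula via Lemma~\ref{lem:deficit-pairs} and state the skeleton condition explicitly, both of which the paper leaves implicit. One small correction to your commentary: the essential use of $d\le n-3$ is in the binary reduction, since your feasibility check in Step~4 only needs $d\le n-2$.
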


\begin{proof}
Given a full skeleton, read the ones from right to left, assigning to each the number of zeros to its left other than the initial zero. Assign these counts as the parts of $\mu$. Since the sum of these counts is the deficit the map sends that statistic to partition size.  It also clearly sends $\area$ to partition length.  The map is clearly injective; to see surjectivity, given $\mu$, consider its parts from smallest to largest. For each part, append zeros until the total number of zeros equals that part, then append a $1$. This word starts with $0$, ends with $1$, and has length $\mu_1+\ell(\mu)\le d+1\le n-2$. Prepend a zero and append zeros to reach length $n$. The resulting full skeleton maps to $\mu$. 
\end{proof}

Given a Dyck sequence, call removing the last entry, increasing it by $1$, and injecting it into the remainder \emph{singleton injection}. Call extracting the leftmost extractable entry, decreasing it by $1$, and appending it to the remainder \emph{singleton extraction}.

\begin{corollary}
\label{cor:singleton-bijection}
For $a<d\le n-3$, singleton injection bijects Dyck sequences of deficit $d$ and area $a$ with those of deficit $d$ and area $a+1$ that are not full Dyck skeletons. Its inverse is singleton extraction.
\end{corollary}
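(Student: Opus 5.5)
The plan is to treat singleton injection as the $k=0$ instance of the bijection $\psi^{-1}\circ\phi$ from Section~1. The pieces are that $\phi$ and $\psi$ are area/dinv-preserving bijections, the definitions of lowers and uppers with $k=0$, and Lemma~\ref{lem:small-area-entries} to ensure no deeper extractions occur.

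\textbf{The maps agree in statistics.} Take $D$ of deficit $d$ and area $a<d\le n-3$. Singleton injection removes the last entry $x=D[-1]$, which by Lemma~\ref{lem:small-area-entries} satisfies $x\le 1$. It injects $x+1$ into the remainder $D'$. Since $x+1\le 2$ and $D'$ contains a $0$ and a $1$ (or $x=0$), we have $x+1\le\max(D')+1$, so Lemma~\ref{lem:injection} says the injection succeeds and the new element is extractable. For the statistics, view $(D',(x),\varnothing)$ heuristically as an upper with $k=0$. Moving $x$ from the end to the front as $x+1$ and injecting it follows the dinv bookkeeping in ``Preservation of area and dinv.'' Each pair $i{:}i$ becomes $i{:}(i-1)$, each $i{:}(i-1)$ becomes $(i-1){:}(i-1)$, and the one pair created by the injection is offset by the area gain of $1$. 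So area rises by exactly $1$ and dinv drops by exactly $1$, and deficit is preserved. I would verify this directly by comparing pairs between $x$ and the other entries before and after, as in that proof.

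\textbf{Inverse and image.} The result is never a full Dyck skeleton, since it has an extractable element. I need the injected element to be the \emph{leftmost} extractable one. For that I would show every proper prefix to its left has no extractable element. Lemma~\ref{lem:skeljection} gives this when $D'$ is a Dyck skeleton. The general case is the main obstacle. I would handle it using the small-deficit structure: entries are at most $2$, and the last entry is at most $1$. The injected element is then a $1$ placed after the first $0$, or a $2$ after the first $1$. In either case the only candidate extractables lie to the right, except in configurations with few zeros, which Lemma~\ref{lem:small-area-entries} excludes by requiring at least four zeros (area $<d$ forces this as in its proof). Once this holds, singleton extraction removes exactly that element, decrements it, and appends it, which undoes the injection. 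Hence extraction after injection is the identity.

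\textbf{Surjectivity.} Conversely, let $E$ have deficit $d$ and area $a+1\le d$ and not be a full skeleton. It then has an extractable element $e\ge1$. Singleton extraction lowers area by $1$ and keeps deficit, by the reverse of the pair bookkeeping. Lemma~\ref{lem:small-area-entries} applies to $E$ because $a+1\le d$. Re-injecting $e$ puts it after the first $e-1$, which is exactly where it was, since an extractable element sits immediately after the leftmost $e-1$ or at least has only one $e-1$ to its left. Making this last point airtight, together with the leftmost-extractable claim above, is where I expect the real work.
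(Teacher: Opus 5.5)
Your outline follows the paper's own proof: Lemma~\ref{lem:small-area-entries} bounds the entries, the Section~1.2 bookkeeping gives the statistics, and the fact that the injected entry is the leftmost extractable one gives invertibility. As written, though, three steps are asserted rather than proved, and two of your sketched justifications would not work.

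\textbf{Injection succeeds.} You assert without reason that $D'$ contains a $1$ when $x=1$. This fails exactly when $D=(0,\ldots,0,1)$, which has area $1$ and deficit $n-2$. That sequence satisfies $a<d$, so it is excluded only because $n-2>n-3$. This is a second, separate use of $d\le n-3$ that your outline omits, and without it injection really does fail.

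\textbf{The statistics.} The ``upper with $k=0$'' framing is not literal. Such an upper would be $(D',\varnothing,(x))$, not $(D',(x),\varnothing)$. Moreover $D'$ need not be a skeleton, and $\phi(D)$ can have $k>0$, so singleton injection is not $\psi^{-1}\circ\phi$. The direct count is what you need, and it is short. The final $x$ formed $\dinv$ pairs with every $x$ and every $x+1$ of $D'$. The injected $x+1$ sits right after the first $x$, and everything to its left is at most $x$. So it forms pairs only with the $x$'s and $(x+1)$'s to its right, which is all of them except the first $x$. Pairs inside $D'$ are unchanged, so $\dinv$ drops by exactly $1$ while area rises by $1$.

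\textbf{The two points you defer as ``the real work''.} Both are one-line observations.

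For the leftmost-extractable claim, there are no exceptional few-zero configurations, and the zero count from the proof of Lemma~\ref{lem:small-area-entries} plays no role. To the left of an injected $1$ there is only the initial $0$. To the left of an injected $2$ there is a block of zeros followed by the first $1$. Zeros are never extractable, since no $-1$ precedes them. That $1$ is now immediately followed by $2$, so it is not extractable either.

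For re-injection, your fallback ``has only one $e-1$ to its left'' would not suffice, because injection places the value immediately after the leftmost $e-1$. What you need is that, in a Dyck sequence, the entry immediately before the leftmost occurrence of a value $v\ge1$ is $v-1$:
\begin{itemize}
\item it is at least $v-1$ by the affine condition;
\item it is not $v$, by leftmostness;
\item it cannot exceed $v$, because values rise from $0$ in unit steps, which would force an earlier $v$.
\end{itemize}
An extractable $v$ has exactly one $v-1$ to its left, so that predecessor is the leftmost $v-1$, and re-injection restores $E$.

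Finally, say why singleton extraction returns a Dyck sequence. Lemma~\ref{lem:small-area-entries} applied to $E$ (area $a+1\le d$) gives $e\le 2$, so the appended $e-1$ is at most $1$. With these points filled in, your argument coincides with the paper's.
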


\begin{proof}
By Lemma~\ref{lem:small-area-entries}, only a $1$ or $2$ is ever injected, so unless a $2$ is injected and all other entries are $0$ then injection succeeds.  However, $(0,\ldots,0,1)$ has deficit $n-2$. Conversely, since extraction appends a value at most $1$, it cannot fail. The calculation in Section~1.2 implies that injection decreases dinv by $1$, while area increases by $1$, so in particular, deficit is preserved. Singleton extraction followed by singleton injection is clearly the identity. By Lemma~\ref{lem:small-area-entries}, singleton injection inserts $1$ after the first zero or $2$ after the first $1$; so the injected element but none to its left is extractable. It follows that singleton extraction undoes singleton injection.
\end{proof}
Consequently, the deficit-$d$ paths with area at most $d$ form strings indexed by $\mu\vdash d$, each running from its full skeleton at area $\ell(\mu)$ to area $d$.
\begin{corollary}
\label{cor:low-strings}
For $1\le d\le n-3$, there is a bijection
$\Phi_{\mathrm{low}}$ from Dyck sequences of length $n$,
deficit $d$, and area in $[1,d]$ to pairs $(\mu,s)$,
where $\mu\vdash d$ and $\ell(\mu)\le s\le d$,
sending area to $s$.
\end{corollary}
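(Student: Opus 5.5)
The plan is to build $\Phi_{\mathrm{low}}$ from the strings described just before the statement, using Corollary~\ref{cor:skeleton-partitions} and Corollary~\ref{cor:singleton-bijection}.

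\textbf{Step 1: the map.} Let $D$ have length $n$, deficit $d$ with $1\le d\le n-3$, and area $s\in[1,d]$. Apply singleton extraction repeatedly until we reach a full Dyck skeleton $F$. Set $\mu$ to be the partition attached to $F$ by Corollary~\ref{cor:skeleton-partitions}, and define $\Phi_{\mathrm{low}}(D)=(\mu,s)$.

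\textbf{Step 2: the extraction chain is well defined and stays in range.} I would argue by descending induction on area.
\begin{itemize}
\item If $D$ is not a full skeleton and $\area(D)=s\le d$, then $s-1<d$.
\item Corollary~\ref{cor:singleton-bijection}, applied with $a=s-1$, shows that singleton extraction sends $D$ to a deficit-$d$ sequence of area $s-1$.
\item Each step lowers area by one and keeps deficit $d$, so the process terminates.
\item It can only stop at a full skeleton. The skeleton cannot be reached at area $0$: the all-zero sequence has deficit $0\neq d$, since $d\ge1$.
\item So the chain ends at a full skeleton $F$ of deficit $d$. By Corollary~\ref{cor:skeleton-partitions}, $\area(F)=\ell(\mu)$, giving $\ell(\mu)\le s$.
\item Since $\mu\vdash d$, we also have $s\le d$.
\end{itemize}

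\textbf{Step 3: the inverse.} Given $(\mu,s)$ with $\mu\vdash d$ and $\ell(\mu)\le s\le d$:
\begin{itemize}
\item Take the full skeleton $F_\mu$ from Corollary~\ref{cor:skeleton-partitions}. It has area $\ell(\mu)$ and deficit $d\le n-3$.
\item Apply singleton injection $s-\ell(\mu)$ times. At each step the current area $a$ satisfies $a<s\le d$, so Corollary~\ref{cor:singleton-bijection} applies and raises area by one while preserving deficit.
\item Singleton injection and singleton extraction are mutually inverse on these ranges. Moreover, an image of injection is never a full skeleton, so an extraction chain started from it passes through exactly these elements back to $F_\mu$.
\item Hence the two constructions are inverse, and area goes to $s$ by construction.
\end{itemize}

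\textbf{Main obstacle.} The key point is that the chains (strings) are disjoint and each contains exactly one full skeleton. This amounts to showing that the backward extraction is deterministic and only stops at full skeletons. Both facts follow from singleton extraction being the inverse bijection onto non-full-skeletons in Corollary~\ref{cor:singleton-bijection}, and from checking that the edge case of area $0$ cannot occur when $d\ge1$.
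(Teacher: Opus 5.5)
Your proposal is correct and takes the same route as the paper. The paper gives no separate proof: it states the corollary right after observing that, by Corollaries~\ref{cor:skeleton-partitions} and~\ref{cor:singleton-bijection}, the deficit-$d$ sequences of area at most $d$ form strings indexed by $\mu\vdash d$, each starting at the full skeleton of area $\ell(\mu)$ and running up to area $d$. Your extraction/injection chains make this explicit, including the area-$0$ edge case. One small correction: $s\le d$ does not follow from $\mu\vdash d$; it is simply the hypothesis on the area.
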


\subsection{High Strings}
Define the \emph{inner partition} of $D$ to consist of the cells below $D$ in the staircase $\delta_n=(n-1,\ldots,1)$; its size is the \emph{co-area} $\binom n2-\area(D)$.\footnote{Here Dyck paths use north and east steps from $(0,0)$ to $(n,n)$ and stay weakly below the diagonal. Inner partitions are drawn as bottom-aligned columns whose heights are their parts, increasing weakly from left to right.}
Define a partition's $\dinv$ to be the $\dinv$ of the Dyck sequence resulting from enclosing that partition as the inner partition of any large enough containing staircase. (Each larger staircase replaces $(x_1,\ldots,x_t)$ by $(0,x_1+1,\ldots,x_t+1)$, so does not change the $\dinv$ so this definition makes sense.)

\begin{lemma}
\label{lem:top-partitions}
For $1\le v\le d\le n-3$, Dyck sequences of length $n$, deficit $d$, and dinv $v$ are in bijection with partitions of size $d+v$ and length $v$.
\end{lemma}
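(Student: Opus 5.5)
The plan is to use the inner partition of $D$ as the bijection, possibly followed by conjugation to fix conventions, and to prove two things: the co-area equals $d+v$, and the length of the inner partition records $\dinv$. The size statement is immediate. Since $\defc(D)=\binom n2-\area(D)-\dinv(D)$, a path of deficit $d$ and dinv $v$ has co-area $\binom n2-\area(D)=d+v$. So the map $D\mapsto\lambda(D)$ sends the set in question into partitions of $d+v$. It is injective on Dyck sequences of a fixed length $n$, because the inner partition determines the path. The real content is (i) that $\ell(\lambda(D))=v$, and (ii) that every partition $\lambda$ with $|\lambda|=d+v$ and $\ell(\lambda)=v$ actually fits in $\delta_n$ and arises from a path whose deficit is exactly $d$.

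For (i), I will use that the inner partition is small: $|\lambda|=d+v\le 2d\le 2n-6$. This forces $D$ to start with a long staircase $0,1,2,\ldots,r$ followed by a short tail that carries all the missing area. By the remark after the definition of a partition's $\dinv$, prepending $0$ and adding $1$ to every entry changes neither $\lambda$ nor $\dinv$. So I may pass to a large staircase and compute $\dinv(\lambda)$ and $\defc$ directly from the parts.

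The core step is to split $\binom n2-\area$ into dinv pairs and deficit pairs, following Lemma~\ref{lem:deficit-pairs}. I would attribute pairs column by column, in the paper's convention where the parts are the bottom-aligned column heights. The claim to prove is that each column contributes exactly one dinv pair and all its other cells become deficit pairs. The natural candidate for that dinv pair is the one formed by the entry where the column's descent begins and the staircase entry of equal value, or of value one larger, immediately before it. If this holds, then $\dinv(D)=\ell(\lambda)$ and $\defc(D)=|\lambda|-\ell(\lambda)$. With $|\lambda|=d+v$ this gives exactly $\ell(\lambda)=v$ and deficit $d$, which yields both (i) and (ii).

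The main obstacle is the interaction between distinct columns. Two tail entries can form extra dinv pairs with each other, or a pair that ``should'' be a deficit pair can be absorbed. The small example of lowering the last entry of the staircase by $2$ (one column of height $2$, with $\dinv=2$) shows that the naive count fails unless the convention for which direction is a ``part'' is chosen correctly. So I would first fix the convention, rows versus columns, by checking the smallest cases $d+v\le 3$. I would then prove the one-dinv-pair-per-part claim using the hypothesis $v\le d\le n-3$, in the same spirit as Lemma~\ref{lem:small-area-entries}. Any configuration producing a second dinv pair from one part, or a cross-part dinv pair, should force at least $n-2$ deficit pairs through a long run of staircase entries, which contradicts $d\le n-3$.

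Surjectivity then follows by constructing the path from $\lambda$. Because $\lambda_1\le|\lambda|-\ell(\lambda)+1\le d+1\le n-2$ and $\ell(\lambda)=v\le n-3$, the partition fits inside $\delta_n$. The resulting path satisfies the same structural hypotheses, so the counting argument applies to it. If the direct counting gets too messy, my fallback is to identify these top-dinv paths with the corresponding partition-indexed families in Loehr--Warrington~\cite{LoehrWarrington09}, which the abstract says Section~5 relies on.
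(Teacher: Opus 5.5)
\textbf{There is a genuine gap: your central claim is false.} You want to show that, for these paths, each part of the inner partition contributes exactly one dinv pair, so that $\dinv(D)=\ell(\lambda(D))$, or $\lambda_1$ after conjugation. Then the inner partition itself would realize the bijection.

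Take $n=5$ and $D=(0,1,2,3,0)$. Its area is $6$ and its dinv is $2$, since the final $0$ pairs with the entries $0$ and $1$. Its deficit is $10-6-2=2=n-3$, so $v=d=2$ lies in the range of the lemma. Its inner partition has the single part $4$, so its length is $1$ and its largest part is $4$. No choice of rows versus columns gives $2$.

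The same happens for $D=(0,1,\ldots,n-2,n-5)$ for every $n\ge5$. One part of size $4$ produces two dinv pairs, while the deficit is only $2\le n-3$. So the contradiction you hoped for, ``a second dinv pair from one part forces at least $n-2$ deficit pairs,'' does not exist.

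In fact, at $n=5$ and $d=v=2$, the two qualifying paths have inner partitions $(4)$ and $(2,1,1)$. The partitions of $4$ with length $2$ are $(3,1)$ and $(2,2)$. So neither the identity nor conjugation is a bijection. The small cases $d+v\le 3$ you planned to check would point you to conjugation, which first fails at $d+v=4$.

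\textbf{What is missing.} Partition dinv and partition length are only equidistributed over partitions of a fixed size. Matching them needs a genuinely nontrivial bijection: the Loehr--Warrington bijection \cite[Theorem~3]{LoehrWarrington09}, which the paper applies right after taking inner partitions. Your fallback mention of Loehr--Warrington points the right way, but it is not optional. It also moves the real work away from your surjectivity paragraph.

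Because that bijection is defined on all partitions, you must show that every partition of size $d+v$ with partition dinv $v$ fits inside $\delta_n$. Showing this for every partition of length $v$ does not suffice. This is where $d\le n-3$ is used, and the bound is sharp. The one-part partition $(d+2)$ has dinv $2$ and fits in $\delta_n$ only because $d+2\le n-1$.

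The paper proves this fitting statement by taking the smallest containing staircase $\delta_t$ with $t>n$. It then shows that any cell of the inner partition touching the diagonal forces either at least $2t-4>|\lambda|$ cells or a deficit of at least $t-3>n-3$.
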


\begin{proof}
Taking inner partitions identifies the Dyck paths with partitions in $\delta_n$ of size $d+v$ and dinv $v$. Loehr--Warrington's bijection~\cite[Theorem~3]{LoehrWarrington09} preserves size and sends partition dinv to length. It remains to show that every partition $\lambda$ of size $d+v$ and dinv $v$ fits in $\delta_n$.

Suppose its smallest containing staircase is $\delta_t$, with $t>n$. Then
\[
 |\lambda|=d+v\le2d\le2n-6<2t-4.
\]
Minimality of the enclosing staircase forces a cell of the inner partition to touch diagonal. Only the two extreme diagonal touching cells (Southwestern or Northeastern but not both) of the staircase may be part of the inner partition: any other cell  forces at least $2t-4$ cells in the inner partition (and both forces $2t-3$). Now, if the cell is the Northeastern (resp. Southwestern) one, then a $1$ in positions $3,\ldots,t-1$ (resp. $4,\ldots,t$) of the Dyck sequence forces a cell one square from the diagonal to be inside the inner partition which forces it to contain a hook of size $t-2$. Since the rightmost column (resp. bottom row) has $t-1$ cells and meets this hook once, it forces $2t-4$ cells in the inner partition.

In the Northeastern case, entries $3,\ldots,t-1$ are therefore at least $2$ and pair with the final zero. In the Southwestern case, the sequence begins $0,0,1$, and entries $4,\ldots,t$ are at least $2$ and pair with the second zero. Either gives $d\ge t-3>n-3$, a contradiction.
\end{proof}

Now partitions $\lambda$ of length $v$ for $v \in [1,d]$ and size $v+d$ are simply pairs $(\mu,s)$ where $\mu\vdash d$ and $s \in [\ell(\mu),d]$. 

\begin{corollary}
\label{cor:high-strings}
For $1\le d\le n-3$, there is a bijection
$\Phi_{\mathrm{high}}$ from Dyck sequences of length $n$,
deficit $d$, and dinv in $[1,d]$ to pairs $(\mu,s)$,
where $\mu\vdash d$ and $\ell(\mu)\le s\le d$,
sending dinv to $s$.
\end{corollary}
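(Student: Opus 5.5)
The plan is to compose Lemma~\ref{lem:top-partitions} with a simple reparametrization of partitions. Fix $1\le d\le n-3$. For each $v\in[1,d]$, Lemma~\ref{lem:top-partitions} gives a bijection from Dyck sequences of length $n$, deficit $d$ and dinv $v$ to partitions $\lambda$ of size $d+v$ and length $v$. This bijection is the inner-partition map followed by the Loehr--Warrington bijection. Taking the disjoint union over $v\in[1,d]$ gives a bijection from the whole set in the statement to
\[
\bigsqcup_{v=1}^{d}\{\lambda : |\lambda|=d+v,\ \ell(\lambda)=v\},
\]
and it records dinv as the length $v$.

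The second step makes explicit the identification asserted just before the corollary. Send a partition $\lambda$ with $\ell(\lambda)=v$ and $|\lambda|=d+v$ to the pair $(\mu,s)$, where $\mu$ is obtained by subtracting $1$ from every part of $\lambda$ and deleting zeros, and $s=v$. The properties to check are as follows.
\begin{itemize}
\item Since $|\lambda|-\ell(\lambda)=d$, we get $\mu\vdash d$.
\item The partition $\mu$ has at most $v$ nonzero parts, so $\ell(\mu)\le s$. Also $s=v\le d$ by the range of $v$.
\item Conversely, given $\mu\vdash d$ and $s$ with $\ell(\mu)\le s\le d$, pad $\mu$ with zeros to $s$ parts and add $1$ to each part. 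The result has length exactly $s$ and size $d+s$.
\end{itemize}
The two maps are visibly inverse to each other (conjugation language is not needed).

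Composing the two steps gives $\Phi_{\mathrm{high}}$. It sends dinv to $s$ because the first step sends dinv to length and the second sends length to $s$.

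There is no real obstacle here, since all the substantive work (fitting $\lambda$ inside $\delta_n$, and Loehr--Warrington) is already in Lemma~\ref{lem:top-partitions}. The only point to double-check is the boundary of the range. The condition $s\ge 1$ is automatic because $\ell(\mu)\ge1$ when $d\ge1$. The hypothesis $v\le d$ matches the upper bound $s\le d$ exactly, so the target set is precisely the one appearing in Corollary~\ref{cor:low-strings}.
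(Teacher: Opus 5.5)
Your proposal is correct and follows essentially the same route as the paper. The paper also composes Lemma~\ref{lem:top-partitions} (inner partition followed by Loehr--Warrington) with the identification of partitions of length $v$ and size $d+v$ with pairs $(\mu,s)$. That identification is exactly your removal of the first column; the paper asserts it in one sentence before the corollary, and your explicit inverse map and boundary check for $1\le s\le d$ spell it out.
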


\subsection{From area to dinv}

Set $M=\binom n2$ and $d=\defc(D)\le n-3$. Define
\[
\widetilde{\mathcal I}(D)=
\begin{cases}
\Phi_{\mathrm{high}}^{-1}\bigl(\Phi_{\mathrm{low}}(D)\bigr),
    & \area(D)<d,\\
\mathcal I(D),
    & d\le\area(D)\le M-2d,\\
\Phi_{\mathrm{low}}^{-1}\bigl(\Phi_{\mathrm{high}}(D)\bigr),
    & \dinv(D)<d.
\end{cases}
\]

\begin{theorem}
\label{thm:full-area-dinv-involution}
The map $\widetilde{\mathcal I}$ is an involution on all
Dyck paths of semilength $n$ satisfying $\defc\le n-3$,
preserving deficit and interchanging area and dinv.
\end{theorem}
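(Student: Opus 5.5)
We first check that the three cases in the definition of $\widetilde{\mathcal I}$ partition the Dyck paths with $d=\defc(D)\le n-3$, and that each case sends its region onto the region of the opposite case. Since $\area+\dinv=M-d$, the condition $\dinv(D)<d$ is the same as $\area(D)>M-2d$. So the three conditions $\area<d$, $d\le\area\le M-2d$, and $\area>M-2d$ are disjoint and exhaustive. This needs the interval to be nonempty, i.e.\ $d\le M-2d$. That holds because $3d\le 3(n-3)\le\binom n2$ for $n\ge 3$ and $d\ge 0$; the cases $n\le 2$ force $d=0$ and are trivial. When $d=0$ the outer cases are empty, and $\widetilde{\mathcal I}=\mathcal I$ on everything, which is handled by Theorem~\ref{thm:area-dinv-involution}. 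So assume $1\le d\le n-3$.

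\emph{Middle region.} Here we invoke Theorem~\ref{thm:area-dinv-involution} directly. The middle condition is $d\le\area\le M-2d$. This is equivalent to $d\le\min(\area,\dinv)$, since $\dinv=M-d-\area\ge d$ exactly when $\area\le M-2d$. On this set $\mathcal I$ is an involution that swaps area and dinv, and so it preserves deficit. In particular its image stays in the middle region, so the middle case of $\widetilde{\mathcal I}$ is applied again on the second application.

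\emph{Outer regions.} Take $\area(D)<d$. Then $\area(D)\ge 1$, because the only area-$0$ sequence is all zeros, which has deficit $0$. So Corollary~\ref{cor:low-strings} applies: $\Phi_{\mathrm{low}}(D)=(\mu,s)$ with $s=\area(D)\in[\ell(\mu),d-1]$. Corollary~\ref{cor:high-strings} then returns a sequence $D'=\Phi_{\mathrm{high}}^{-1}(\mu,s)$ of deficit $d$ with $\dinv(D')=s<d$. Hence $D'$ lies in the third region and $\area(D')=M-d-s=\dinv(D)$. Symmetrically, if $\dinv(D)<d$, then $\Phi_{\mathrm{low}}^{-1}(\Phi_{\mathrm{high}}(D))$ has deficit $d$ and area $\dinv(D)<d$, so it lies in the first region. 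The two outer maps are mutually inverse because $\Phi_{\mathrm{low}}$ and $\Phi_{\mathrm{high}}$ are bijections and the parameter $s$ is carried through unchanged. It follows that $\widetilde{\mathcal I}^2=\mathrm{id}$ on the outer regions, and that area and dinv are interchanged there.

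The main point to be careful about is the domain bookkeeping in Corollaries~\ref{cor:low-strings} and~\ref{cor:high-strings}. They are stated for area (resp.\ dinv) in $[1,d]$, while here we only use the values in $[1,d-1]$. So we must check that $\Phi_{\mathrm{low}}$ restricted to area $<d$ and $\Phi_{\mathrm{high}}$ restricted to dinv $<d$ have the same image $\{(\mu,s):\ell(\mu)\le s\le d-1\}$. This holds because both maps send the relevant statistic to $s$. The value $s=d$ belongs to the middle region and is handled by $\mathcal I$, so it is not double-assigned. Finally, the footnoted existence of each element of the image uses $d\le n-3$, which is where that hypothesis enters beyond Theorem~\ref{thm:area-dinv-involution}.
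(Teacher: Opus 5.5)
Your proposal is correct and is the paper's argument: the outer cases are mutually inverse and swap area and dinv by Corollaries~\ref{cor:low-strings} and~\ref{cor:high-strings}, and the middle case is Theorem~\ref{thm:area-dinv-involution}; you add the partition of the domain (via $3d\le M$) and the range bookkeeping that the paper leaves implicit. Two trivial slips: for $n\le 2$ the hypothesis $\defc\le n-3<0$ makes the domain empty rather than forcing $d=0$, and your ``symmetric'' step tacitly uses that only the staircase $0,1,\ldots,n-1$ has dinv $0$ (and it has deficit $0$), so $\dinv<d$ indeed places the path in the $[1,d-1]$ range of Corollary~\ref{cor:high-strings}.
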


\begin{proof}
The two outer cases are inverse and interchange area and
dinv by Corollaries~\ref{cor:low-strings}
and~\ref{cor:high-strings}. The middle case follows from
Theorem~\ref{thm:area-dinv-involution}, including deficit zero.
\end{proof}

This gives a completely bijective proof of the following
partition-indexed formula of Lee and
Li~\cite[Theorem~1]{LeeLi11}.

\begin{corollary}
\label{cor:partition-catalan}
The homogeneous component of degree $M-d$ in the
$q,t$-Catalan polynomial $C_n(q,t)$, for $0\le d\le n-3$, is
\[
\sum_{\substack{D\text{ of semilength }n\\\defc(D)=d}}
q^{\area(D)}t^{\dinv(D)}
=
\sum_{\mu\vdash d}
\ \sum_{a=\ell(\mu)}^{M-d-\ell(\mu)}
q^a t^{M-d-a}.
\]
\end{corollary}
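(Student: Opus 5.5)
The plan is to read the identity off the three regions of Theorem~\ref{thm:full-area-dinv-involution} and the string descriptions of Corollaries~\ref{cor:low-strings} and~\ref{cor:high-strings}. Split the deficit-$d$ paths into three parts: those with $\area<d$, those with $d\le\area\le M-2d$, and those with $\dinv<d$. Since $\area+\dinv=M-d$, the third part is exactly $\area>M-2d$. For $n\ge 3$ and $d\le n-3$ we have $d\le M-2d$, so the three parts are disjoint. I will then compute the generating function by strings. Each $\mu\vdash d$ should contribute $\sum_{a=\ell(\mu)}^{M-d-\ell(\mu)}q^at^{M-d-a}$, and the whole point is that the total number of paths at area $a$ equals $\#\{\mu\vdash d:\ell(\mu)\le a\le M-d-\ell(\mu)\}$.

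\textbf{Low range $a<d$.} By the discussion before Corollary~\ref{cor:low-strings}, deficit-$d$ paths of area $\le d$ are partitioned into strings indexed by $\mu\vdash d$. The string for $\mu$ contains exactly one path at each area $s\in[\ell(\mu),d]$ and nothing below $\ell(\mu)$. This uses Corollary~\ref{cor:skeleton-partitions} together with Corollary~\ref{cor:singleton-bijection}, and the fact that a path at area $s\le d$ is either a full skeleton or comes by singleton injection from area $s-1$. So the count at area $a\le d$ is $\#\{\mu:\ell(\mu)\le a\}$. The case $d=0$ is trivial: the only partition is empty, and the middle theorem already covers all areas.

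\textbf{High range.} By Lemma~\ref{lem:top-partitions}, the count at dinv $v\in[1,d]$ is the number of partitions of $d+v$ with length $v$. Removing the first column identifies these with $\mu\vdash d$ with $\ell(\mu)\le v$. At $v=0$ the count is zero: dinv $0$ forces all consecutive pairs to be non-dinv, which a direct deficit count rules out, or one can use the $\delta_n$ argument. This is consistent with $\ell(\mu)\ge 1$ when $d\ge1$. Translating $v=M-d-a$ gives the count $\#\{\mu:a\le M-d-\ell(\mu)\}$.

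\textbf{Middle range.} Theorem~\ref{thm:flat-middle} shows the count at $a\in[d,M-2d]$ equals the count at $a=d$. That value is $p(d)$, the number of all partitions of $d$. For $a$ in this range, both conditions $\ell(\mu)\le a$ and $a\le M-d-\ell(\mu)$ hold automatically, because $\ell(\mu)\le d\le a$ and $a\le M-2d\le M-d-\ell(\mu)$. So the coefficient agrees with the right-hand side there too.

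The only delicate points are the endpoint bookkeeping and the $v=0$/$a=M-d$ edge. First, check that the middle region overlaps the low-string region exactly at $a=d$, so the matching of counts is consistent. Second, check that $M-2d\ge d$, which holds because $n\ge3$ gives $M\ge 3(n-3)+3$. Summing over $a$ and swapping the order of summation then yields $\sum_{\mu\vdash d}\sum_{a=\ell(\mu)}^{M-d-\ell(\mu)}q^at^{M-d-a}$. Bijectivity comes from the explicit maps $\Phi_{\mathrm{low}}$, $\Phi_{\mathrm{high}}$ and $\theta$.
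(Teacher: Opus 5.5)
Your proposal is correct and takes essentially the same route as the paper. The paper reads the formula off Theorem~\ref{thm:full-area-dinv-involution} by combining three ingredients: the low strings of Corollary~\ref{cor:low-strings}, the constant middle range of Theorem~\ref{thm:flat-middle}, and the high strings of Corollary~\ref{cor:high-strings}. Your endpoint checks fill in bookkeeping the paper leaves implicit, namely $3d\le M$, the agreement at $a=d$, the vanishing coefficient at dinv $0$ when $d\ge1$, and the separate case $d=0$.
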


\section*{Acknowledgments}
The author thanks Nicholas Loehr and Kyungyong Lee for useful conversations.  Codex and ChatGPT were used for coding assistance for exploring conjectures related to this work, for assistance with some of the technical parts of the current appendix, and for assistance formatting the paper in Latex.

\newpage

\appendix
\numberwithin{equation}{section}
\section{Proof of the Inequality}
\label{app:inequality}
We now consider the inequality of Lemma~\ref{ineq}. The arguments in this appendix are quite technical and span about half of the current\footnote{We hope to have a much shorter and more enlightening proof in a coming version of this paper.} paper. The arguments are purely symbolic and  require no computer verification

In what follows, we prove that
\[
 2\min\{h_z(X_0),h_0(X_0),h_2(X_0)\}\le\beta(X_0).
\]
Our strategy is to restrict the content using the existence of a simple
lower or upper, then bound $\dinv$ for $pTs$ and $22$-$pTs$ objects
through factorizations into dual pairs and a singleton. The incidence
lemma (Lemma~\ref{lem:app-incidence}) strengthens these bounds enough
to yield contradictions for large $H=\max(X_0)$; smaller $H$ requires
detailed casework using the original tableau rows.
Section~\ref{app:negative-minimum} treats minima at most $-2$, and
Section~\ref{app:ordinary} treats nonnegative content. For minimum $-1$,
Section~\ref{app:min-one} sharpens the content restriction, then reduces
to the nonnegative case or treats a unique zero directly.

\providecommand{\ind}[1]{\mathbf1_{\{#1\}}}
\providecommand{\loss}{\operatorname{loss}}
\providecommand{\mass}{\operatorname{mass}}

\subsection{Content and Factorizations}
\label{app:preliminaries}

The assertion is immediate if one of the three classes is empty, so we
may take $k\ge2$. Write $m_Y(v)$ for the multiplicity of $v$ in $Y$,
and put $m_v=m_{X_0}(v)$, $m=\min(X_0)$, and $H=\max(X_0)$. Thus
\begin{equation}
\label{eq:app-beta}
 \beta(X_0)=\binom{2k+2}{2}+k-\sum_vvm_v-2m_0-4\sum_{v<0}m_v.
\end{equation}
For a word $W$ of content $X_0$, define
\[
 \loss(W)=\#\{(i,j):i<j,\ W_j=W_i+1\},\qquad
 U(X_0)=\sum_v\binom{m_v}{2}+\sum_vm_vm_{v-1}.
\]
Every pair on adjacent levels contributes either to $\dinv$ or to
$\loss$, so
\begin{equation}
\label{eq:app-loss}
 \dinv(W)=U(X_0)-\loss(W),\qquad
 \Delta(X_0):=2U(X_0)-\beta(X_0).
\end{equation}
For $X=X_0+L$, put $\beta_L(X)=\beta(X-L)$. We use these local
coordinates in selected cases to place the minimum at zero; an entry
$v$ represents $v-L$ in $X_0$. Translation preserves $\dinv$, and
\begin{equation}
\label{eq:app-translation}
 \beta_L(X)=\binom{2k+2}{2}+k+(2k+1)L-\sum_vv m_X(v)
            -4\sum_{v<L}m_X(v)-2m_X(L).
\end{equation}
When using these coordinates, the objects are translated as well:
the endpoint values are $L,L-1$, $m_v$ and $H$ denote the multiplicities
and maximum of $X$, and $\Delta_L=2U(X)-\beta_L(X)$. Nonnegative
content has full support when every value from zero through $H$ occurs.

For content containing every positive value through $H$, add a
zero to its nonnegative part. If $t$ is the smallest value of
multiplicity one in this augmented content, put
$\mass(X_0)=t+1+\sum_{v>t}m_v$; if no such value exists,
put $t=\infty$ and $\mass(X_0)=0$.
For $0<t<\infty$, zero is present,
$m_i\ge2$ for $1\le i<t$, and $m_t=1$.

\begin{lemma}
\label{lem:app-support}
If $(A,B)$ is a simple lower or upper, its content has minimum at
most zero and contains every positive value through its maximum.
Moreover,
\begin{equation}
\label{eq:app-mass}
 \mass(X_0)\le\max(|A|+1,|B|).
\end{equation}
\end{lemma}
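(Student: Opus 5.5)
The plan is to read each factor as a lattice walk anchored at its fixed endpoint. Since $0:A$ is affine, reading $A$ left to right from the anchor $0$ is a walk whose up-steps have size at most $1$. Since $B:(-1)$ is reverse affine, reading $B$ right to left from the anchor $-1$ is also a walk whose up-steps have size at most $1$. The key observation is then immediate: if the walk in $A$ ever reaches a value $w\ge 1$, it must have visited every value $1,\dots,w-1$ inside $A$ before doing so. Likewise, if the walk in $B$ ever reaches $w\ge0$, it must have visited $0,1,\dots,w-1$ inside $B$ first.

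\textbf{Support and minimum.} Full support follows directly from the observation: every positive value $w$ in $X_0$ lies in $A$ or $B$, so all of $1,\dots,w-1$ occur as well. For the minimum, I would use that $k\ge2$ is in force in this appendix, so $B\neq\varnothing$. The reverse affine condition at the junction $B[-1]:(-1)$ gives $B[-1]\le 0$, hence $\min(X_0)\le0$. This is where $k\ge1$ is genuinely needed, since the lower $((1),\varnothing)$ has content $\{1\}$.

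\textbf{Mass bound.} If $t=\infty$ then $\mass(X_0)=0$ and there is nothing to prove, so assume $t<\infty$. I would split on whether $t=0$ or $t>0$.

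When $t=0$, the augmented content has a single zero, and that zero is the added one. So $X_0$ has no zeros, and the $B$-walk, which would have to pass through $0$ to get above $-1$, contains no positive entries. All positive entries therefore lie in $A$. This gives $|A|\ge\sum_{v>0}m_v=\mass(X_0)-1$.

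When $t>0$, the value $t$ occurs exactly once in $X_0$, and the starting values $0$ and $-1$ of the two walks are both below $t$. Any entry larger than $t$, and the copy of $t$ itself, must therefore lie in the factor containing that unique $t$. Call this factor $F$.
\begin{itemize}
\item If $F=A$, the walk also visits $1,\dots,t-1$ in $A$, so $|A|\ge (t-1)+1+\sum_{v>t}m_v=\mass(X_0)-1$.
\item If $F=B$, the walk visits $0,1,\dots,t-1$ in $B$, so $|B|\ge t+1+\sum_{v>t}m_v=\mass(X_0)$.
\end{itemize}
In every case $\mass(X_0)\le\max(|A|+1,|B|)$.

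The only delicate step is bookkeeping the added zero in the definition of $\mass$. The $+1$ in $t+1$ is exactly the shift between the two anchors: the $A$-walk starts at $0$, which is not an entry of $A$, while the $B$-walk must include a genuine $0$. That shift is what produces the asymmetric $|A|+1$ versus $|B|$ in \eqref{eq:app-mass}, so I would check this carefully in both the $t=0$ and $t>0$ cases.
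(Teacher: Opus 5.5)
Your proposal is correct and follows essentially the same route as the paper. Both use the ascent (walk) observation for $0:A$ and $(-1):B^{\mathrm{rev}}$. When zero is absent, all positive entries are forced into $A$. Otherwise the unique first singleton $t>0$ confines every entry $\ge t$ to one factor, which then contains $1,\dots,t$ if it is $A$ or $0,\dots,t$ if it is $B$.

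Your remark that the minimum claim needs $B\neq\varnothing$ is also correct. This always holds for uppers, and holds for lowers once $k\ge1$, as in the appendix where $k\ge2$. It makes explicit an assumption the paper's one-line ``the last entry of $B$ is at most zero'' leaves tacit.
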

\begin{proof}
The last entry of $B$ is at most zero. An ascent in $0:A$, or in
$(-1):B^{\mathrm{rev}}$, visits every intervening positive value.
If zero is absent, every positive entry lies in $A$, giving the
bound $|A|+1$. Otherwise, let $t>0$ be the first singleton.
Entries above $t$ can lie in only one of the two factors. That
factor contains $1,\ldots,t$ if it is $A$, and $0,\ldots,t$ if
it is $B$. This proves \eqref{eq:app-mass}; when there is no entry
above $t$, use the factor containing $t$.
\end{proof}

\begin{lemma}
\label{lem:app-ascent-loss}
If $m\ge-1$, every simple lower or upper satisfies
\begin{equation}
\label{eq:app-ascent-loss}
 \loss(A:B)\ge T_+:=\sum_{v\ge2}m_v.
\end{equation}
\end{lemma}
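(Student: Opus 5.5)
The plan is to prove \eqref{eq:app-ascent-loss} by an injection from the entries of value at least $2$ in the word $W=A:B$ into the pairs counted by $\loss(W)$. Write $W=A:B$, and for an entry at position $j$ with $W_j=v\ge 2$ try to assign a pair $(i,j)$ with $i<j$ and $W_i=v-1$. Distinct $j$ give distinct pairs, so such an assignment already proves \eqref{eq:app-ascent-loss}. The work is to show that a predecessor of value $v-1$ exists. Where it does not, the entry has to be charged to some other loss pair in a way that stays injective.

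\emph{Entries of $A$.} Since $0:A$ is affine, reading leftward from an entry of value $v\ge2$ the values drop by at most $1$ per step and end at the prepended $0<v-1$. So some position strictly to the left has value exactly $v-1$. It is not the prepended $0$ because $v-1\ge1$, so it lies in $A$. Hence every entry $\ge2$ of $A$ owns a loss pair with target itself.

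\emph{Entries of $B$.} Let $b$ be an entry of $B$ with value $v\ge2$. If some $v-1$ occurs to its left in $A:B$, charge $b$ to that pair as before. Otherwise, every occurrence of $v-1$ lies to the right of $b$. Since $0:A$ is affine and visits every intermediate value, this forces $\max(A)\le v-2$. Here I would use that $B:(-1)$ is reverse affine: reading rightward from $b$, the values drop by at most one per step down to $-1$. So after $b$ there is a first occurrence $c$ of $v-1$, and after $c$ a first occurrence of $v-2$, and so on. I would then charge $b$ to a loss pair whose \emph{target} lies in the descending chain after $b$ and whose \emph{source} lies earlier with value one less. For the $v-1$ at $c$, the source is a $v-2$ in $A$ when $\max(A)=v-2$. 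In general I would use the hypothesis $m\ge-1$, which, together with the fact that $A$ starts at $\le 1$ and contains $0,\dots,\max(A)$, gives a low-value source in $A$ for the first level of the chain not reached by $A$.

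The main obstacle is injectivity in this second case. Several $B$-entries of the same value $v$ lacking a predecessor all sit before the first $v-1$, so they would be charged to the same $c$. To fix this, I would process such entries within each maximal reverse-affine descending run of $B$ separately. Between two consecutive occurrences $b<b'$ of $v$ with no $v-1$ between them, reverse affineness forces the segment from $b$ to $b'$ to have values $\ge v$. I would use this to show that each run contributes its own $v-1$ or its own source in $A$. If that fails, the fallback is a counting argument level by level: for each $v\ge2$, show $\#\{(i,j):W_i=v-1,W_j=v\}\ge m_v$ directly. The idea is that the $v-1$ entries that follow the initial block of $v$'s each have an earlier $v-2$ or $v$-free source, and any shortfall is compensated by the extra $(v-2,v-1)$ pairs coming from the zeros and the $-1$'s permitted by $m\ge -1$. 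The level-by-level count is what I expect to require the most care.
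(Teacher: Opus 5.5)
There is a genuine gap. Your first case is correct: every entry of value $v\ge2$ with a $v-1$ somewhere to its left, including every such entry of $A$, owns a loss pair ending at itself. The remaining case is entries of $B$ of value $v\ge2$ with no $v-1$ before them. That case carries all the content of the lemma, and you leave it unresolved.

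The per-run idea is never developed, and your level-by-level fallback is false. Take the simple upper $A=(0,0,0,0,0)$, $B=(3,3,3,2,1,0)$. There is no pair $i<j$ with $W_i=2$, $W_j=3$, although $m_3=3$. The lemma holds there only because the five zeros of $A$ each form a loss with the $1$ in $B$. So the threes must be paid for by losses at a different level, and the payment has to be counted, not merely shown to exist.

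The decisive omission is that you never use $|A|\ge|B|-1$, and it is indispensable. Take $A=(0)$, $B=(3,3,3,2,1,0)$. This satisfies both endpoint conditions and $m\ge-1$, yet $\operatorname{loss}(A:B)=1<4=T_+$. Your step of finding ``a low-value source in $A$'' for the chain below an orphan yields one source. What you need is as many distinct sources as there are orphans, and only the length of $A$ can supply them. (A minor slip: $A$ need not contain $0$, e.g.\ $A=(1,2)$; it contains $1,\ldots,\max(A)$.)

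The missing count, which is how the paper argues, runs as follows. Let $H$ be the maximum and $J=\max(A)$.

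If $J\ge H-1$, your first case already covers every entry of value at least $2$.

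Otherwise:
\begin{itemize}
\item Every $H$ lies in $B$, and $(-1):B^{\mathrm{rev}}$ is affine, so $B$ contains every value $0,\ldots,H$.
\item Put $t=\max(J,0)$. For $2\le v\le t+1$, the first occurrence of $v-1$ in $A$ precedes every entry of value $v$.
\item Each of the other $|A|-t$ entries of $A$ has value in $[-1,t]$; this is where $m\ge-1$ enters. Each such entry therefore forms a loss with an entry of $B$ one higher. These pairs have distinct sources, so they are distinct from one another and from the previous ones.
\item The entries not yet covered have value at least $t+2$ and lie in $B$. There are at most $|B|-t-2$ of them, since $B$ also contains $0,\ldots,t+1$.
\end{itemize}
Since $|A|-t\ge|B|-1-t>|B|-t-2$, these extra losses cover the remaining entries, and $\operatorname{loss}(A:B)\ge T_+$.
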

\begin{proof}
There is nothing to prove if $H<2$. Put $a=|A|$, $b=|B|$, and
$J=\max(A)$. If $J\ge H-1$, the first ascents in $A$ supply a
predecessor before every entry of value at least two. Otherwise,
$B^{\mathrm{rev}}$ contains $0,\ldots,H$. Put $t=\max(J,0)$.
The first occurrences of $1,\ldots,t$ in $A$ give losses for
every entry of value $2,\ldots,t+1$. Each of the other $a-t$ entries in
$A$ lies in $[-1,t]$ and gives a further, distinct loss with an
entry of $B$ one higher. At most $b-t-2$ entries of value at
least two remain unaccounted for, and $a\ge b-1$.
\end{proof}

\begin{lemma}
\label{lem:app-low-capacity}
A $pTs$ object of minimum $m=-1$ or $-2$ satisfies
$m_m+m_{m+1}\le k$. For nonnegative content with full support at endpoint values $1,0$, a $pTs$ object
satisfies $m_0+m_1\le k$. At endpoint values $0,-1$, the same
inequality holds whenever its $\dinv$ is greater than
$\beta_0(X)/2$.
\end{lemma}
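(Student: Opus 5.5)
The plan is to read off where the two smallest values can sit inside a $pTs$ object, and then bound their multiplicity by the number of rows of $T$. Recall the shape of a $pTs$ object. Since $0:p$ is dual, every entry of $p$ is at least $2$, and $p$ has distinct entries. Since $s:(-1)$ is reverse dual, every entry of $s$ is at least $1$. The tableau $T$ has two columns of lengths $k-\min(|p|,|s|)$ and $k+1-\max(|p|,|s|)$, with $|p|\ne|s|$. Each of its rows has at most two entries, and when it has two, the right entry is at least the left entry plus $2$. The key observation is therefore the following: \emph{two values differing by at most $1$ can never share a row of $T$}. Hence the number of occurrences in $T$ of the values $\{v,v+1\}$ is at most the number of rows, namely $k-\min(|p|,|s|)\le k$.

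\textbf{Case $m=-1$ or $m=-2$.} Both $m$ and $m+1$ are at most $0$, so neither can occur in $p$ or in $s$. All copies of these values therefore lie in $T$, one per row at most. This gives $m_m+m_{m+1}\le k-\min(|p|,|s|)\le k$.

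\textbf{Endpoint values $1,0$.} Work in the local coordinates of \eqref{eq:app-translation} with $L=1$. The conditions become that $1:p$ is dual and $s:0$ is reverse dual. Then $p$ has entries at least $3$ and $s$ has entries at least $2$. The values $0,1$ again live only in $T$, so the same row count gives $m_0+m_1\le k$. Full support is not even needed for this step.

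\textbf{Endpoint values $0,-1$ with nonnegative content.} Here the only leak is that $s$ may end in $1$: its last entry $s_{-1}$ can equal $1$, while $0$ still cannot appear outside $T$. This gives
\[
 m_0+m_1\le k-\min(|p|,|s|)+\ind{s_{-1}=1}.
\]
This is at most $k$ unless $\min(|p|,|s|)=0$ and $s_{-1}=1$. Since $s$ is nonempty in that situation, this forces $p=\varnothing$ and $|s|\ge1$. Then $T$ has column lengths $(k,\,k+1-|s|)$, and equality $m_0+m_1=k+1$ would mean that every one of the $k$ rows of $T$ contains a $0$ or a $1$.

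In this extremal configuration I would bound $\dinv$ using \eqref{eq:app-loss}, that is $\dinv=U(X)-\loss$, and aim to show $2\dinv\le\beta_0(X)$, contradicting the hypothesis. First, every second-column entry sits to the right of a $0$ or $1$ in its row, so it is at least $2$. In the row-reading word (bottom to top), each row's left entry of value $0$ or $1$ followed by its right entry of value $\ge2$ should produce forced non-dinv pairs. Second, the affine column condition in the first column, read bottom to top, forces ascents through the small values, which supplies losses in the spirit of Lemma~\ref{lem:app-ascent-loss}. Third, the final $1$ of $s$ lies to the right of all the $0$'s in $T$, and each of those pairs is a loss. Summing these should give $\loss\ge U-\tfrac12\beta_0(X)$, i.e.\ $\Delta_0\le 2\loss$.

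The main obstacle is this last inequality: making the loss count sharp enough to match $\beta_0(X)$, whose $-2m_X(0)$ term and $\binom{2k+2}{2}+k$ leading term must be compared against $2U$. I would first try to expand $\Delta_0$ using full support, writing the content as $m_0+m_1=k+1$ plus $k-|s|$ further entries at least $2$. I would then check that the forced losses already listed dominate, with a short separate check when $|s|=k+1$, where the second column is empty.
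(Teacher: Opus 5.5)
Your treatment of the first two assertions is correct and is the paper's argument. The paper says the two lowest values avoid both outer sequences and the second column, so they occupy distinct first-column cells; your ``at most one of $v,v+1$ per row'' is the same count. Your reduction of the third assertion to the extremal configuration ($p$ empty, $s_{-1}=1$, every row of $T$ beginning with $0$ or $1$) is also correct.

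The gap is the inequality $\Delta_0\le 2\,\mathrm{loss}$ itself, and the loss sources you list cannot produce it.
\begin{itemize}
\item Since $\dinv=U-\mathrm{loss}$ and $U$ only counts pairs on equal or adjacent levels, the within-row pairs (a $0$ or $1$ followed by an entry $\ge2$) are not in $U$ at all. They are irrelevant.
\item The affine column condition forces no ascents here, because a column of $0$s and $1$s is automatically affine.
\item That leaves only the $m_0$ losses against the final $1$, which are too few.
\end{itemize}
For a counterexample to your count, take $k=2$, $s=(1)$, and $T$ with bottom row $(1,3)$ and top row $(0,2)$, so the word is $1\,3\,0\,2\,1$. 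Here $U=6$, $\beta_0(X)=8$ and $\Delta_0=4$, but your count gives only $\mathrm{loss}\ge1$. The actual loss is $2$, so $\dinv=4=\beta_0(X)/2$. The extra unit comes from the bottom-row $1$ preceding the top-row $2$, a pair none of your sources captures.

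Three ideas are missing.
\begin{itemize}
\item In the extremal configuration $s$ contains no $2$ (since $s_{-2}\ge3$), and a second-column $2$ must sit beside a $0$. Hence every $2$ lies in a $(0,2)$ row, and $m_2\le m_0$.
\item Compare each of the $m_1-1$ ones of $T$ with each $(0,2)$ row. Whichever comes first in the reading word, you get a loss ($1$ before $2$, or $0$ before $1$). These pairs are distinct from the $m_0$ losses against the final $1$, so $\mathrm{loss}\ge m_0+m_2(m_1-1)$.
\item Write $y_j=m_{j+2}$, $S=\sum_j jy_j$ and $G=\sum_{j-i\ge2}y_iy_j$, and use $m_0+m_1=k+1$ and $\sum_j y_j=k$. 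Expanding gives
\[
\Delta_0-2\bigl(m_0+y_0(m_1-1)\bigr)=(S+y_0-k-G)+(y_0-m_0)-G .
\]
Full support gives $G\ge\sum_{j\ge2}(j-1)y_j=S+y_0-k$, so the right side is $\le0$.
\end{itemize}
The last step is where full support enters. It is not needed for the second assertion, as you say, but it is needed for the third.
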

\begin{proof}
In the first two assertions, the indicated entries occur in
neither outer sequence nor the second tableau column. They occupy
distinct positions in the first column, whose length is
$k-\min(|p|,|s|)\le k$.

At endpoint values $0,-1$, the suffix can also supply a final one,
so $m_0+m_1\le k+1$. Equality forces $p$ empty, a final suffix
one, and every entry of the first tableau column to be zero or
one. Each two then belongs to a row $(0,2)$, so $m_2\le m_0$.
Every zero precedes the final one. Comparing each other one with
each $(0,2)$ row gives a loss, either one before two or zero
before one. Hence
\[
 \loss(p:\mathrm{RR}(T):s)\ge m_0+m_2(m_1-1).
\]
Put $y_j=m_{j+2}$ for $j\ge0$, $S=\sum j y_j$, and
$G=\sum_{j-i\ge2}y_i y_j$. Here $m_0+m_1=k+1$ and
$\sum y_j=k$. Expanding
\eqref{eq:app-beta} and \eqref{eq:app-loss} gives
\[
\begin{aligned}
 \Delta_0-2\bigl(m_0+y_0(m_1-1)\bigr)
 &=S+2y_0-2G-k-m_0\\
 &=(S+y_0-k-G)+(y_0-m_0)-G\le0,
\end{aligned}
\]
since full positive support gives
$G\ge\sum_{j\ge2}(j-1)y_j=S+y_0-k$. Thus equality in the
capacity bound cannot satisfy the strict $\dinv$ inequality.
\end{proof}

For full nonnegative support and mass at most $k+2$, a first
singleton $t=1$ or $2$ gives
\begin{equation}
\label{eq:app-early-singleton}
 m_0+m_1=k+1+\sigma,\qquad
 k=\sum_{v>t}m_v+t-1+\sigma,\qquad \sigma\ge0.
\end{equation}
Thus, when a $pTs$ object satisfies the strict inequality at
$L=0$ or $1$, neither of these first singletons occurs. In
particular, $m_1,m_2\ge2$ whenever these levels are present.

\begin{lemma}
\label{lem:app-recording}
Every partition of $2r+1$ with at most $r+1$ rows has a
semistandard tableau of weight $(2^r,1)$.
\end{lemma}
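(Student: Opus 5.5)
The plan is to reduce the statement to the classical criterion for positivity of Kostka numbers: a semistandard tableau of shape $\lambda$ and weight $\mu$ exists if and only if $\lambda$ dominates the partition obtained by sorting $\mu$. Here the weight $(2^r,1)=(2,\ldots,2,1)$ is already a partition of $2r+1$. So it suffices to show that any $\lambda\vdash 2r+1$ with $\ell(\lambda)\le r+1$ satisfies
\[
\lambda_1+\cdots+\lambda_i\ge 2i\qquad(1\le i\le r).
\]
The last partial sum, at $i=r+1$, is automatic because both sides equal $2r+1$.

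The key step is a short contradiction argument. Suppose that for some $i\le r$ we had $\lambda_1+\cdots+\lambda_i\le 2i-1$. Then not all of $\lambda_1,\ldots,\lambda_i$ can be at least $2$, so $\lambda_i\le1$. Since $\lambda$ is weakly decreasing, every later part is also at most $1$. There are at most $r+1-i$ later parts, so
\[
|\lambda|\le (2i-1)+(r+1-i)=r+i\le 2r<2r+1,
\]
which is a contradiction. Hence $\lambda\trianglerighteq(2^r,1)$, and a semistandard tableau of that weight exists.

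To keep with the paper's preference for explicit constructions, I would also describe the tableau directly rather than only citing the Kostka criterion. Fill the shape by placing the labels $1,1,2,2,\ldots,r,r,r+1$ one at a time. Each label goes into the lowest row whose end cell is addable and not in a column already containing that label, proceeding row by row from the top. Equivalently, build the tableau column by column so that each column strip is a horizontal strip of size $2$ (and $1$ at the end). The dominance inequalities above are exactly what guarantees that each successive horizontal strip can be chosen inside $\lambda$, by the standard Pieri-rule induction. The main point needing care is this inductive step: one must check that the remaining skew shape after placing the labels $1,\ldots,j$ still dominates $(2^{r-j},1)$. The same counting argument, applied to the truncated shape, gives this.

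If the lemma is later applied to the reverse semistandard recording tableaux of Section~4, the conversion is immediate. Replacing each label $x$ by $r+2-x$ and rotating turns a reverse semistandard tableau into a semistandard one, and reversing the weight only permutes it, which does not affect existence.
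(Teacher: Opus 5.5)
Your proof is correct, but it takes a different route from the paper. You cite the classical criterion that a semistandard tableau of shape $\lambda$ and partition weight $\mu$ exists if and only if $\lambda\trianglerighteq\mu$. Your counting argument then checks $\lambda_1+\cdots+\lambda_i\ge 2i$ for $i\le r$ cleanly.

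The paper builds the tableau by hand, from the outside in:
\begin{itemize}
\item It first removes a corner, namely the bottom singleton row if there are $r+1$ rows. This leaves a partition of $2r$ with at most $r$ rows.
\item It then repeatedly removes a horizontal two-strip from a partition of $2s$ with at most $s$ rows, so that at most $s-1$ rows remain. A three-case analysis shows such a strip always exists.
\item The labels $r,r-1,\ldots,0$ are assigned to the removed pieces.
\end{itemize}
The paper's invariant ``$2s$ cells and at most $s$ rows'' is exactly $\lambda\trianglerighteq(2^s)$. Your counting argument gives one direction, and dominance always forces $\ell(\lambda)\le\ell(\mu)$. So the paper is in effect running the existence half of the Kostka criterion by hand, peeling off the largest label's strip while keeping dominance.

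Your route is shorter and shows why the row bound is the natural hypothesis, at the cost of an external theorem. The paper's route is self-contained and explicit, in keeping with its constructive aims. In the appendix only existence of the recording tableau is used, so your citation is enough there.

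Two side remarks in your write-up should be dropped.
\begin{itemize}
\item Your sketch of an explicit construction is not a proof. The placement rule is not well specified, and the invariant you propose (a remaining skew shape ``dominating'' $(2^{r-j},1)$) is not the right criterion for skew shapes. If you want an explicit version, peel strips off the outer boundary of the straight shape, as the paper does.
\item Your conversion remark is inaccurate. In this paper ``reverse semistandard'' means rows strictly increasing and columns weakly increasing. Complementing labels and rotating by $180^\circ$ keeps the same direction strict, so it does not produce a semistandard tableau.
\end{itemize}
No conversion is needed in any case. The lemma is used in Lemma~\ref{lem:app-factorization} for dual pairs, whose recording tableaux are ordinary semistandard, since each dual factor occupies a horizontal strip.
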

\begin{proof}
Remove a corner leaving at most $r$ rows. If there are $r+1$
rows, the bottom row has length one and is removed; otherwise
any corner will do. Put the last label $r$ in this box.

A partition of $2s$ with at most $s$ rows admits removal of a
horizontal two-strip leaving at most $s-1$ rows. If there are
$s$ rows and none is a singleton, the shape is $(2^s)$ and we
remove its bottom row. If there is a singleton row, remove the
bottom row and the last box of the lowest row of length at least
two. If there are fewer than $s$ rows and no singleton, remove
the last two boxes of the bottom row. A row of length at least
two exists whenever needed by the size bound.

Repeat, assigning the labels $r-1,\ldots,0$ to the removed pairs.
The nested shapes and horizontal strips give the required row
and column inequalities. For $r=0$, use the single box.
\end{proof}

\begin{lemma}
\label{lem:app-factorization}
For $k\ge0$, let $0\le a,b\le k+1$, $a\ne b$, and let $T$ be a Dyck
tableau with column lengths $(k-\min(a,b),k+1-\max(a,b))$. If $p$ is dual of
length $a$ and $s$ is reverse dual of length $b$, there is a
factorization into $k$ dual pairs and one singleton with the
content and $\dinv$ of $p:\mathrm{RR}(T):s$. No endpoint
conditions are required.
\end{lemma}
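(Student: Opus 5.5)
The plan is to pass from the $pTs$ object to a multiset of factors using the symmetric-function machinery of \cite{Hawkes26}. First, the tableau $T$ is replaced by a factorization, and then the outer pieces $p$ and $s$ are absorbed. Write $r=k-\min(a,b)$ and $r'=k+1-\max(a,b)$, so that $|T|=r+r'$ and $|p|+|s|+|T|=2k+1$. By \cite[Corollary~3.19]{Hawkes26}, the dual Dyck symmetric function of a content $Y$ at fixed $\dinv$ is Schur positive, with Schur coefficients counting Dyck tableaux. Dual Dyck factorizations with factor lengths $\alpha$ correspond to tableau–recording pairs whose recording tableau is semistandard of weight $\alpha$; the factors are read in the order dictated by \cite[Theorem~3.18]{Hawkes26}. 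So for each shape and weight $\alpha$ for which a semistandard recording tableau exists, we obtain a dual Dyck factorization of $\mathrm{RR}(T)$'s content and $\dinv$ with factor lengths $\alpha$. Insertion preserves content and $\dinv$, and $\dinv$ of a factorization is taken on the concatenation, so these agree with those of $\mathrm{RR}(T)$.

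Next, the outer sequences are handled. A dual sequence $p$ of length $a$ can itself be cut into dual pairs plus possibly a singleton. A reverse dual $s$ is reversed to dual; reversal changes $\dinv$, so instead I would keep $s$ as a sequence of length-$\le 2$ reverse dual blocks. A two-element reverse dual block $(x,y)$ with $x\ge y+2$ contributes no internal $\dinv$, exactly as a dual pair $(y,x)$ does. Moreover, since the pair never contributes $\dinv$ with itself, I would check that replacing it by $(y,x)$ leaves its $\dinv$ with outside letters unchanged. Within a block, swapping two letters that differ by at least $2$ does not change comparisons with letters outside the block, because those pairs are counted by position relative to the block as a whole. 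So every reverse dual piece of $s$ of length two converts to a dual pair with the same statistics.

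Thus the task reduces to the following. We must cut $p$ into blocks of sizes $2$ (and $1$), and $s$ likewise. The middle must be factored into dual blocks of sizes $2$ and $1$ so that there is exactly one singleton overall. Parity tells us exactly one of $a$, $b$, $|T|$ is odd, or all three are. I would choose which piece carries the singleton: if $|T|=r+r'$ is odd, put the singleton in $T$ and cut $p$ and $s$ into pairs (requiring $a,b$ even); otherwise take the singleton from an odd outer piece. In the awkward case where two of the three pieces are odd, I would move one letter across a boundary. For example, I would merge the last letter of $p$ with the first block coming from $T$, or instead use Lemma~\ref{lem:app-recording} with a weight such as $(1,2^{r''})$ arranged so the odd leftover joins a neighbour. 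This requires a dual condition across the junction, which fails in general. To avoid it, I would rather feed the whole word $p:\mathrm{RR}(T):s$ through the symmetric-function argument. Specifically, I would treat $p$ and $s^{\mathrm{rev}}$-converted pieces as one-row tableaux and use the Littlewood–Richardson/Schur positivity of products, \cite[Corollary~3.19]{Hawkes26}, to realise the combined content and $\dinv$ as a single tableau-with-recording of some shape having at most $k+1$ rows. Lemma~\ref{lem:app-recording} then supplies a semistandard tableau of weight $(2^k,1)$, hence the desired factorization into $k$ dual pairs and one singleton.

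The main obstacle is this last step: showing that the shape obtained by combining a one-row piece of length $a$, the two-column shape of $T$, and a one-row piece of length $b$ has at most $k+1$ rows. A row of dual entries inserts as a horizontal strip, adding at most one row. Hence the row count is at most $r+2\le k+2$, and since $\min(a,b)\ge0$ I need the sharper count. Here I would use $a\neq b$: one of the outer sequences is nonempty when $\min(a,b)=0$, and $r=k-\min(a,b)$. When $\min(a,b)\ge1$, we get $r+2\le k+1$. When $\min(a,b)=0$, the empty sequence adds no row, so we get $r+1=k+1$. So the bound holds, and Lemma~\ref{lem:app-recording} finishes the proof.
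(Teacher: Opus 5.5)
Your endgame is the paper's: produce one Dyck tableau with the content and $\dinv$ of $p:\mathrm{RR}(T):s$ and at most $k+1$ rows, then apply Lemma~\ref{lem:app-recording} and inverse insertion. Your row count $k-\min(a,b)+\mathbf 1_{a>0}+\mathbf 1_{b>0}\le k+1$ is exactly the paper's.

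The step that is not justified is how that tableau is obtained. Dual Dyck insertion \cite[Theorem~3.18]{Hawkes26} appends a dual word to the right of the reading word. So $s^{\mathrm{rev}}$ can be inserted into $T$ as a horizontal strip, but $p$ lies to the left of $\mathrm{RR}(T)$ and cannot be inserted this way. Your appeal to ``Littlewood--Richardson/Schur positivity of products'' does not fill this in. The statistic is $\dinv$ of a single concatenated word of fixed total content, and multiplying symmetric functions such as $s_{(a)}s_{\mathrm{sh}(T)}s_{(b)}$ does not model concatenating a dual word on the left of a tableau's reading word.

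The paper handles the left piece by rotation. After inserting $s^{\mathrm{rev}}$, it passes to a rotated tableau via Lemma~\ref{lem:tableau-reversal}. It then reverses the whole word, complementing $\dinv$, so that $p$ sits at the right end. It inserts $p$ and rotates back with Lemma~\ref{lem:tableau-reversal} again.

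Your remark that reversing $s$ changes $\dinv$ is mistaken, and your row count needs the correct statement. The entries of $s$ pairwise differ by at least $2$, so it has no internal $\dinv$ in either order. Its pairs with earlier entries depend only on content. Hence $s^{\mathrm{rev}}$ can replace $s$ as a single dual factor. Cutting $s$ into reversed blocks of length at most $2$ would give up to $\lceil b/2\rceil$ factors and break the bound.

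With this corrected, your ``feed the whole word through'' idea can be made rigorous more simply than the paper's rotation argument. The word $p:\mathrm{RR}(T):s^{\mathrm{rev}}$ is itself a dual Dyck factorization with factor lengths $\alpha=(a,\text{row lengths of }T\text{ bottom to top},b)$. This composition has at most $k-\min(a,b)+\mathbf 1_{a>0}+\mathbf 1_{b>0}\le k+1$ nonzero parts. Inserting it from scratch by \cite[Theorem~3.18]{Hawkes26} gives a Dyck tableau of the same content and $\dinv$ with a semistandard recording tableau of weight $\alpha$. Its strictly increasing columns force at most $\ell(\alpha)\le k+1$ rows. This avoids Lemma~\ref{lem:tableau-reversal} altogether. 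The resulting shape need not contain $\mathrm{sh}(T)$, but only the row bound matters for Lemma~\ref{lem:app-recording}.
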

\begin{proof}
Reversing $s$ alone preserves its zero internal $\dinv$ and its
interactions with the preceding entries. Insert $s^{\mathrm{rev}}$
into $T$ using \cite[Theorem~3.18]{Hawkes26}. The statistic in that
theorem differs from $\dinv$ only by the content-dependent number
of equal-entry pairs. Thus insertion preserves $\dinv$, also in the
presence of the unchanged prefix, since its interactions with the
inserted block depend only on content.

By Lemma~\ref*{lem:tableau-reversal}, replace the resulting tableau
by a rotated tableau $U$ of the corresponding shape, with the same
content and $\dinv$.
Reverse the whole word $p:\mathrm{RR}(U)$. This puts the ordinary
tableau obtained by rotating $U$ before $p^{\mathrm{rev}}$ and
complements the statistic, as in Section~3.4. Reverse
$p^{\mathrm{rev}}$ alone, which again changes no internal $\dinv$,
and insert the resulting dual sequence $p$. Rotating the whole
resulting tableau complements the statistic a second time.
Lemma~\ref*{lem:tableau-reversal} now gives an ordinary tableau
with the original content and $\dinv$.

Each nonempty horizontal strip adds at most one row, so the
final tableau has at most
$k-\min(a,b)+\ind{a>0}+\ind{b>0}\le k+1$ rows.
Lemma~\ref{lem:app-recording} and inverse insertion give the factors. \end{proof}

In particular, a $pTs$ object has such a factorization. For a
$22$-$pTs$ object, retain $d=1$ or $2$ entries at each external
end and apply the lemma to the remaining parts, with parameters
$k-d,a-d,b-d$. The column lengths are unchanged, and the interior
has $k-d$ dual pairs and one singleton. Its interactions with
the retained entries are unchanged because $C(U,V)$ depends only
on content. The new pairs need not be original tableau rows;
when row positions matter, we will retain the original rows
instead. This construction uses Lemma~\ref{lem:tableau-reversal}
from Section~3, but not the balance assertion.

\subsection{Bounds for Dual Factors}
\label{app:factor-bounds}

For dual factors $F,G$ of length one or two, put
\[
 q(F,G)=2\bigl(\min(|F|,|G|)-\max(C(F,G),C(G,F))\bigr).
\]
In either order, the pairs counted by $C$ form a matching, so
$q(F,G)\ge0$. Let $Q$ be the sum of $q(F,G)$ over all unordered
pairs of factors. A factorization into $r$ dual pairs and one
singleton has $\dinv$ at most $r^2-Q/2$.

Define
\begin{equation}
\label{eq:app-weight}
 w(v)=\begin{cases}v+2,&v<0,\\0,&v=0,\\v-2,&v>0.
 \end{cases}
 \qquad
 \beta(X_0)=2k^2-\left(1+\sum_vm_vw(v)\right).
\end{equation}
For any function $f$ on entries, write $f(F)=\sum_{v\in F}f(v)$.
Define
\begin{equation}
\label{eq:app-margin}
 \mathcal M=1+\sum_vm_vw(v)-Q.
 \qquad 2\dinv(W)-\beta(X_0)\le\mathcal M.
\end{equation}
The bound for $\mathcal M$ allows each pair of factors its better
order, but these orders need not be simultaneously possible.
To retain this information, orient $F\to G$ when
$C(F,G)>C(G,F)$, giving the edge weight $C(F,G)-C(G,F)$. If $\Omega$ is the total weight of
edges pointing backward in the chosen factor order, then
\begin{equation}
\label{eq:app-order}
 2\dinv(W)-\beta(X_0)=\mathcal M-2\Omega.
\end{equation}
Every directed triangle forces $\Omega\ge1$. Let $n_{(i,j)}$ denote
the multiplicity of the pair $(i,j)$ in the factorization under
consideration.

If fixed outer sequences $P,S$ have total length $2d$, put
\begin{equation}
\label{eq:app-fixed-ends}
\begin{aligned}
 \rho(v)&=C(P,(v))+C((v),S),
 &\varphi(v)&=w(v)+2\rho(v)-2d,\\
 c&=1+w(P)+w(S)+2\dinv(P:S)-2d(d-1).
\end{aligned}
\end{equation}
For an interior $I$ consisting of $k-d$ pairs and one singleton,
expanding its bound gives
\begin{equation}
\label{eq:app-fixed-bound}
 2\dinv(P:I:S)-\beta(X_0)
       \le c+\sum_{v\in I}\varphi(v)-Q.
\end{equation}
At endpoint values $L,L-1$, replace $w(v)$ by $w_L(v)=w(v-L)$.

\paragraph{Counting entries outside a factor's neighboring levels.}
For a dual pair $R$, put
$K(R)=\bigcup_{v\in R}\{v-1,v,v+1\}$. For two dual pairs $R,F$
and a singleton $(z)$, the matching property gives
\begin{equation}
\label{eq:app-incidence}
\begin{gathered}
 q(R,F)\ge2|F\setminus K(R)|,\qquad
 q(R,F)\ge|F\setminus K(R)|+|R\setminus K(F)|,\\
 q((z),R)=2\ind{z\notin K(R)}.
\end{gathered}
\end{equation}
All counts include multiplicities. For prescribed lower bounds
$b_v$ on the entry counts, define
\[
 \operatorname{out}_b(R)=\sum_{v\notin K(R)}b_v,
 \qquad \nu_b(v)=b_{v-1}+b_v+b_{v+1}.
\]
For $R=(a,b')$,
\begin{equation}
\label{eq:app-out}
 \operatorname{out}_b(a,b')
   =\sum_vb_v-\nu_b(a)-\nu_b(b')
                       +b_{a+1}\ind{b'=a+2}.
\end{equation}

\begin{lemma}
\label{lem:app-incidence}
Suppose, after some factors have been fixed, that
\[
 \mathcal M=c+\sum_Rg(R)n_R-Q_{\rm free},
 \qquad \sum_Rf_j(R)n_R\le d_j.
\]
Here $n_R$ counts the remaining pairs, $Q_{\rm free}$ is the
sum of their mutual $q$-values, and $b_v$ entries of value $v$
are required among them. If $\lambda_j\ge0$ and weights $\theta_v$,
assumed nonnegative unless the remaining multiplicity at $v$ equals
$b_v$, satisfy
\begin{equation}
\label{eq:app-row-inequality}
 g(R)+\theta(R)-\operatorname{out}_b(R)
        \le\sum_j\lambda_jf_j(R)
\end{equation}
for every allowed pair, then
\begin{equation}
\label{eq:app-weighted-bound}
 \mathcal M\le c-\theta(b)+\sum_j\lambda_jd_j,
 \qquad \theta(b)=\sum_vb_v\theta_v.
\end{equation}
\end{lemma}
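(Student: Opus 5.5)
The plan is a direct weak-duality argument. First bound $Q_{\rm free}$ from below by $\sum_R n_R\operatorname{out}_b(R)$ using the incidence inequality \eqref{eq:app-incidence}. Then substitute \eqref{eq:app-row-inequality} row by row and sum. The multipliers $\lambda_j$ are paid for by the constraints $\sum_R f_j(R)n_R\le d_j$, and the weights $\theta_v$ are paid for by the required counts $b_v$.

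\emph{Step 1: lower bound for $Q_{\rm free}$.} Let $\mathcal R$ be the multiset of remaining dual pairs, and let $m'_v$ be the number of entries of value $v$ among them, so $m'_v\ge b_v$. For each unordered pair $\{R,F\}$ of distinct members of $\mathcal R$, the second inequality of \eqref{eq:app-incidence} gives $q(R,F)\ge|F\setminus K(R)|+|R\setminus K(F)|$. Summing over unordered pairs turns this into a sum over ordered pairs $(R,F)$ with $R\ne F$ as occurrences:
\[
 Q_{\rm free}\ge\sum_{R\in\mathcal R}\#\{\text{entries of }\mathcal R\setminus\{R\}\text{ outside }K(R)\}.
\]
The entries of $R$ itself lie in $K(R)$ by definition, so the inner count equals the number of all remaining entries whose value lies outside $K(R)$. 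That number is $\sum_{v\notin K(R)}m'_v\ge\sum_{v\notin K(R)}b_v=\operatorname{out}_b(R)$. Hence $Q_{\rm free}\ge\sum_R n_R\operatorname{out}_b(R)$.

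\emph{Step 2: summation.} Combining Step 1 with the hypothesis on $\mathcal M$ gives
\[
\mathcal M\le c+\sum_R n_R\bigl(g(R)-\operatorname{out}_b(R)\bigr).
\]
By \eqref{eq:app-row-inequality}, each bracket is at most $\sum_j\lambda_jf_j(R)-\theta(R)$. Summing with the nonnegative multiplicities $n_R$, and using $\lambda_j\ge0$ together with the constraints, the $\lambda$-part is at most $\sum_j\lambda_jd_j$. For the $\theta$-part, $\sum_Rn_R\theta(R)=\sum_v\theta_vm'_v$, since $\theta(R)$ sums $\theta$ over the entries of $R$.

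\emph{Step 3: comparing with $\theta(b)$.} It remains to check $\sum_v\theta_vm'_v\ge\theta(b)$. This holds term by term. Where $\theta_v\ge0$, it follows from $m'_v\ge b_v$. Where $\theta_v$ may be negative, the hypothesis gives $m'_v=b_v$, so the terms agree. This yields \eqref{eq:app-weighted-bound}.

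The only delicate point is Step 1, specifically the bookkeeping that each ordered pair of distinct occurrences is counted exactly once. One must also confirm that an entry of $R$ never falls outside $K(R)$, so that counting all remaining entries rather than only those of other factors loses nothing; this is immediate from the definition of $K$. The singleton plays no role, since $Q_{\rm free}$, $n_R$ and the $b_v$ refer only to the remaining pairs, and dropping its $q$-contributions only weakens the lower bound on $Q_{\rm free}$ in the safe direction.
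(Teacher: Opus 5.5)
Your proof is correct and is the paper's argument: you use the same double count through the incidence inequality \eqref{eq:app-incidence}, with $R\subseteq K(R)$ so that the count over the other pairs becomes $\operatorname{out}_n(R)$, then sum the row inequalities \eqref{eq:app-row-inequality}, and handle a negative $\theta_v$ through exact multiplicity. The only difference is presentation. The paper writes this as an exact identity $\mathcal M=K-\Sigma$, where $\Sigma$ collects exactly the terms you discard: the pair slacks, the constraint slack, the terms $e_v(\theta_v+t_v)$, and the residual interactions $\widehat q(R_i,R_j)$. It keeps them explicit because they are reused later in the tightness arguments.
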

\begin{proof}
List the remaining pairs as $R_i$, including repetitions. Let $n_v$
be their entry counts and put $e_v=n_v-b_v$. Define
\[
\begin{aligned}
 D_i&=\sum_j\lambda_jf_j(R_i)-g(R_i)-\theta(R_i)
                         +\operatorname{out}_b(R_i),\\
 S&=\sum_j\lambda_j\left(d_j-\sum_i f_j(R_i)\right),\qquad
 t_v=\#\{i:v\notin K(R_i)\},\\
 \widehat q(R,F)&=q(R,F)-|F\setminus K(R)|-|R\setminus K(F)|\ge0.
\end{aligned}
\]
The last inequality is \eqref{eq:app-incidence}. Double counting gives
\[
 Q_{\rm free}=\sum_{i<j}\widehat q(R_i,R_j)
                        +\sum_i\operatorname{out}_n(R_i),\qquad
 \sum_i\operatorname{out}_{n-b}(R_i)=\sum_ve_vt_v.
\]
Consequently, with $K=c-\theta(b)+\sum_j\lambda_jd_j$,
\begin{equation}
\label{eq:app-remainder}
 \mathcal M=K-\Sigma,\qquad
 \Sigma=\sum_iD_i+S+\sum_ve_v(\theta_v+t_v)
                       +\sum_{i<j}\widehat q(R_i,R_j)\ge0.
\end{equation}
Every term is nonnegative; a negative $\theta_v$ is permitted only
when $e_v=0$.
\end{proof}

We call $D_i$ the pair slack in \eqref{eq:app-row-inequality}.
The unused interactions $e_vt_v$ count each excess entry of value $v$
against every pair whose neighboring levels omit $v$.
These terms, the residual interactions $\widehat q(R_i,R_j)$, the
weighted excesses $e_v\theta_v$, and the constraint slack $S$ are
separate contributions.
Combining \eqref{eq:app-order} and \eqref{eq:app-remainder} gives
\begin{equation}
\label{eq:app-tightness}
 2\dinv(W)-\beta(X_0)\le K-\Sigma-2\Omega,
\end{equation}
with equality unless an earlier inequality was used. Since
$2\dinv(W)-\beta(X_0)$ is an integer, a positive left side with
$K=1$ therefore forces
$\Sigma=\Omega=0$; with $K=2$, it forces $\Sigma\le1$ and
$\Omega=0$. A directed triangle rules out either case.

Here is the form of the lemma used most often. Suppose
\[
 \mathcal M=c+\varphi(z)+\sum_R\varphi(R)-Q,
\]
and fix one pair $F$. If $b$ is a lower bound on the full entry
counts, put
\[
 b'=(b-\operatorname{cont}(F)-\delta_z)_+,
 \qquad
 \eta(R)=\varphi(R)-q((z),R)-q(F,R)-\operatorname{out}_{b'}(R),
\]
where $\delta_z$ is one occurrence of $z$ and the positive part
is taken entrywise. Write $h_t(R)=|R\cap(t,H]|$. With the weight
restrictions of Lemma~\ref{lem:app-incidence}, suppose
\[
 \sum_R(h_t(R)-1)\le\kappa-\ind{z>t},\qquad
 \eta(R)+\theta(R)\le\lambda(h_t(R)-1),\quad\lambda\ge0,
\]
where the sum includes $F$ and the second inequality holds for each
unfixed pair. Then
\begin{equation}
\label{eq:app-reserve}
\begin{aligned}
 \mathcal M\le{}&c+\varphi(z)+\varphi(F)-q((z),F)-\theta(b')\\
 &+\lambda\bigl(\kappa-\ind{z>t}-h_t(F)+1\bigr).
\end{aligned}
\end{equation}
With no fixed pair, omit its terms and the final $+1$. Further
fixed pairs are handled in the same way. We often write $q(z,R)$
in place of $q((z),R)$.

\subsection{Uniform Bounds}
\label{app:uniform}

For the larger values of $H$, we apply \eqref{eq:app-reserve}
to $r$ pairs and one singleton with the following parameters:
\[
\begin{array}{c|c|c|c}
 \text{case}&r&c&\varphi(v)\\\hline
 S_\varepsilon\ (\varepsilon=0,1)&k&1&w(v)\\
 Z&k&1&w(v-1)\\
 A_u\ (u=0,1)&k-1&3-u&\varphi_u(v).
\end{array}
\]
In $S_\varepsilon$, the minimum is $-2+\varepsilon$, zero is
present, and every positive value through $H$ occurs. For
$\varepsilon=1$ require $m_0=1$. If the first positive singleton
$t$ is finite, assume $k\ge t+\sum_{v>t}m_v-1+2\varepsilon$.
For $S_0$, this is the mass bound of Lemma~\ref{lem:app-support}.
The stronger assumption for $S_1$ will follow from
Section~\ref{app:mass-sharpening} before that case is applied.
Case $Z$ is the factor bound at $L=1$ on full nonnegative
content with mass at most $k+2$. Case $A_u$ comes from a
$22$-$pTs$ object on such content at
endpoints $0,-1$, after fixing its first entry $u\in\{0,1\}$
and last entry zero. By \eqref{eq:app-fixed-ends},
\begin{equation}
\label{eq:app-anchor-weights}
 \varphi_u(0)=2,\qquad \varphi_u(1)=-1+2u,\qquad
 \varphi_u(v)=v-4\quad(v\ge2).
\end{equation}
For finite $t$, write $e=\ind{z>t}$. The mass assumption gives
\begin{equation}
\label{eq:app-cut}
 \sum_R(h_t(R)-1)\le\kappa-e,\qquad
 \kappa=\begin{cases}
 1-t-2\varepsilon,&S_\varepsilon,\\
 1-t,&Z,\\
 2-t,&A_u.
 \end{cases}
\end{equation}
Equivalently, if $A$ counts pairs with both entries above $t$
and $B$ counts pairs with both entries at most $t$, then, for
some $\sigma\ge0$,
\begin{equation}
\label{eq:app-low-pairs}
 B=\begin{cases}
 A+e+t-1+2\varepsilon+\sigma,&S_\varepsilon,\\
 A+e+t-1+\sigma,&Z,\\
 A+e+t-2+\sigma,&A_u.
 \end{cases}
\end{equation}

\begin{lemma}
\label{lem:app-uniform}
The expression $\mathcal M$ is nonpositive in the following
cases: $S_0$ for $H\ge5$; $S_1$ for $H\ge7$; and $Z,A_0,A_1$
for $H\ge6$, provided in the last three cases that $t\ge3$ or
$t=\infty$. For $S_1$, it is also nonpositive at $(H,t)=(5,2)$
and at $H=6$ with $2\le t\le5$.
\end{lemma}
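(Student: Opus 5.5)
The plan is to apply the reserve form \eqref{eq:app-reserve} of Lemma~\ref{lem:app-incidence} in each case with no fixed pair, the singleton $z$ kept separate, and a single multiplier $\lambda\ge0$ attached to the cut constraint \eqref{eq:app-cut}. The target bound is
\[
\mathcal M\le c+\varphi(z)-\theta(b)+\lambda\bigl(\kappa-\ind{z>t}\bigr),
\]
and we aim to make it nonpositive.

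\emph{Lower bounds and weights.} For the lower bounds $b_v$ we use the full-support information: $b_v=2$ for $1\le v<t$, $b_t=1$, and $b_v=1$ for $t<v\le H$. We also put $b_0=1$ (and $b_{-1}=1$ or $b_{-2}=1$ in $S_\varepsilon$ according to the minimum), removing one copy of $z$ as in the definition of $b'$. The weights $\theta_v$ will be chosen with $\theta_v\ge0$ wherever a level may carry excess multiplicity. The natural first choice is to take $\lambda$ equal to the largest value of $\varphi$ on high entries per unit of $h_t$, roughly $\lambda\approx H-2$. High entries then pay for themselves through the cut, and $\theta_v$ is tuned so that each low pair $R$ with $h_t(R)=0$ satisfies \eqref{eq:app-row-inequality} with right side $-\lambda$.

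\emph{Pairwise verification.} The main work is checking the inequality $\eta(R)+\theta(R)\le\lambda(h_t(R)-1)$ for every dual pair $R=(a,a')$ with $a'\ge a+2$. For this we compute $\operatorname{out}_{b'}(R)$ from \eqref{eq:app-out}. The key point is that a pair straddling distant levels omits many required levels, so $\operatorname{out}_{b'}(R)$ grows roughly like $2(H-3)$ minus local overlaps. This growth dominates $\varphi(R)=w(a)+w(a')$, which is at most about $2H-4$. We split the check into three types:
\begin{itemize}
\item both entries at most $t$, which is where the negative right side $-\lambda$ must be absorbed by $\operatorname{out}$;
\item one entry above $t$;
\item both entries above $t$.
\end{itemize}
In each type the inequality is linear in $a$ and $a'$ once one knows which of $a\pm1$ and $a'\pm1$ lie in the support. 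So it suffices to check the extreme configurations: $a$ at the minimum or at $0$, $a'\in\{a+2,H\}$, and the boundary levels $t$ and $H$.

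\emph{Final bound.} Once all pairs pass, the constant term is evaluated as
\[
c+\varphi(z)-\theta(b')+\lambda(\kappa-\ind{z>t}).
\]
Here $\theta(b')$ grows with $H$ (about $\sum_{v\le H}\theta_v$), while $\varphi(z)-\lambda\ind{z>t}$ is bounded by a constant. This gives nonpositivity once $H$ passes the stated thresholds: $5$ for $S_0$, $7$ for $S_1$ (where $\kappa$ is smaller by $2$ and $m_0=1$ removes a level), and $6$ for $Z$, $A_0$, $A_1$. In $Z$ and $A_u$ the hypothesis $t\ge3$ or $t=\infty$ ensures $b_1=b_2=2$. For the exceptional $S_1$ cases $(H,t)=(5,2)$ and $H=6$ with $2\le t\le5$, the final constant is $0$ or $1$. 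There we expect to invoke \eqref{eq:app-tightness}, using integrality together with a forced positive slack, namely a forced $\widehat q$ term or a directed triangle among the pairs carrying $H$ and $H-1$, to get $\mathcal M\le0$.

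\emph{Expected obstacle.} The main obstacle is choosing one $\theta$ that works simultaneously for all pair types near the cut level $t$ and near the endpoint anchors in $A_u$. There $\varphi_u(0)=2$ and $\varphi_u(1)=\pm1$ distort the weights. If a single $\theta$ fails, I would instead split on $t$ (small versus large) and choose $\theta$ separately in each range.
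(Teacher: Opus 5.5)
Your outline has the same architecture as the paper's proof: the reserve form \eqref{eq:app-reserve}, weights $\theta$, a multiplier $\lambda$ on the cut \eqref{eq:app-cut}, and integrality through \eqref{eq:app-tightness} at the boundary. But the lemma \emph{is} the explicit choice of weights together with its verification. Your sentence that the bound ``gives nonpositivity once $H$ passes the stated thresholds'' is asserted, not derived, and the choices you do commit to would fail.

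\textbf{The multiplier must depend on $t$.} It cannot be about $H-2$ independently of $t$. Raising $\lambda$ lowers the allowance of every pair inequality uniformly. With your counts ($\sum_v b_v=H+t+1$ in $S_0$) and $\lambda=H-2$, the pair $(t-4,t-1)$ has $h_t=0$. For $t\ge6$ it forces $\theta_{t-4}+\theta_{t-1}\le 2-t$ whatever $z$ is, i.e.\ negative weights at levels of multiplicity at least two that need not be exact. The paper instead takes $\lambda=H-t-2$ (or $H-t-3$), flattens the scores by \eqref{eq:app-flatten}, and uses $\lambda=0$ at the top of the $t$-range and for $t=\infty$.

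\textbf{Smaller points in your setup.}
\begin{itemize}
\item After removing $z$ from $b'$, the leftover term is $\varphi(z)+\theta_z-\lambda\mathbf{1}_{\{z>t\}}$. This is of size $t$, not a constant, so the final bound is quadratic in $t$ as well as $H$ and must be controlled uniformly in $t$.
\item The paper isolates the minimum entry, as the singleton or a fixed pair $(m,j)$. This puts $2\mathbf{1}_{\{\min(R)\ge m+2\}}$ into every pair inequality \eqref{eq:app-minimum-row}; with $b_m=1$ left free, each pair gets only one unit from $m$.
\item In $A_u$, your $b_0=1$ is not a valid lower bound, since zero may occur only as the fixed last entry. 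Also, $b_1$ must be lowered by $u$.
\end{itemize}

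\textbf{Uniform weights do not reach the stated thresholds.} The paper's flattened weights give only $H\ge7,8,8,6,7$ for $S_0,S_1,Z,A_0,A_1$. The rest needs ingredients absent from your plan.
\begin{itemize}
\item \emph{Smallest first singletons.} These lie outside every flattened range. For $S_0$ at $t=1$, the paper fixes the unique pair containing $1$ and uses \eqref{eq:app-first-one}. For $Z$ and $A_1$ at $t=3$, \eqref{eq:app-low-pairs} forces a low pair, which is fixed ($F=(0,2),(0,3),(1,3)$) with its own weights. In $S_1$, $t=1$ is excluded by counting low pairs.
\item \emph{Heights below the uniform thresholds.} For $S_0$ at $H=5,6$, $S_1$ at $H=7$, $Z$ at $H=6,7$, and $A_1$ at $H=6$, each $(H,t)$ gets its own weight vector. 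In several of these (e.g.\ $S_0$ at $H=5$, and $Z$ and $A_1$ at $H=6$), positive constants up to $3$ survive. They are removed only by exhibiting forced slack in \eqref{eq:app-tightness}: a required entry whose every possible pair has positive slack, an unused interaction $\widehat q$, or a weighted excess $e_v\theta_v$.
\item \emph{Where tightness is actually needed.} The $S_1$ cases where you expect it, $(5,2)$ and $H=6$, need none. The case $(6,2)$ comes from the flattened weights. For the others, lowering $\theta_0$ by $2$ (allowed since $m_0=1$) turns the tabulated constant $K_0$ into $K_0+3-2\lambda<0$.
\end{itemize}

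So the proposal is a reasonable template, but the part it defers is the whole proof, and the heuristics offered for filling it in would not do so.
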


We prove the lemma in the next four subsubsections by isolating a
minimum entry and choosing weights for the remaining pairs. The
smallest singletons and boundary heights require separate choices.

\subsubsection{Fixing a Minimum Entry}
\label{app:minimum-entry}

For $1\le i\le H$, let $b_i=2$ below $t$ and $b_i=1$ from $t$
onward; when $t=\infty$, take all these counts to be two. In
$S_\varepsilon$ also take $b_0=1$; in $Z,A_u$ take $b_0=0$,
and in $A_u$ reduce $b_1$ by $u$. All negative counts are zero.
Thus the minimum is omitted in $S_\varepsilon$ and $Z$.
When factors are fixed, their entries are subtracted from these
required counts as in \eqref{eq:app-reserve}.

Put $W=\varphi+\theta$ and
$V(v)=W(v)-\lambda\ind{v>t}$. In $S_\varepsilon$ or $Z$, let
$m$ be the minimum and suppose
\begin{equation}
\label{eq:app-minimum-row}
 W(R)\le\operatorname{out}_b(R)+2\ind{\min(R)\ge m+2}
                              +\lambda(h_t(R)-1).
\end{equation}
For $z=m$, its $q$-value supplies the extra two, giving
\begin{equation}
\label{eq:app-minimum-singleton}
 \mathcal M\le c+\varphi(m)-\theta(b)+\lambda\kappa.
\end{equation}
Otherwise fix a pair $F=(m,j)$. Since
\[
 q(F,R)\ge2\ind{\min(R)\ge m+2}+2\ind{j\notin K(R)},
\]
its interaction and that of the singleton account for the
removed required entries in \eqref{eq:app-reserve}. Hence
\begin{equation}
\label{eq:app-minimum-pair}
 \mathcal M\le c+\varphi(m)-\theta(b)+\lambda(\kappa+1)
                         +V(z)+V(j)-q(z,(m,j)).
\end{equation}
The two fixed entries cannot both equal a level of multiplicity
one. For $A_u$, use \eqref{eq:app-minimum-row} with the term
$2\ind{\min(R)\ge m+2}$ omitted; the corresponding bound is
\begin{equation}
\label{eq:app-anchor-bound}
 \mathcal M\le3-u-\theta(b)+\lambda(2-t)+\max_zV(z).
\end{equation}
For $t=\infty$, set $\lambda=0$ and omit every term multiplied by $\lambda$ in these bounds.

\subsubsection{The General Choice of Weights}
\label{app:general-weights}

We choose the weights to make the adjusted scores nearly constant.
Subtracting $\nu_b(v)$ leaves the neighboring required entries to the
incidence bound; the $\lambda$ term uses the mass restriction
\eqref{eq:app-cut} to control pairs with both entries above $t$.
Choose $\lambda,g$ as in the table below. Set $\theta=0$ on negative
levels and, for $Z,A_u$, at zero. On the remaining levels set
\begin{equation}
\label{eq:app-flatten}
 \theta_v=g-\varphi(v)-\nu_b(v)+\lambda\ind{v>t},
\end{equation}
then subtract $2\varepsilon$ at zero in $S_\varepsilon$, or two
at one in $Z$.
\[
\begin{array}{c|c|c|c|l}
 &\lambda&g&t&\text{upper bound for }2\mathcal M\\\hline
 S_0&H-t-2&t+2&2\le t\le H-2&-H^2+7H-t^2+9t-22\\
 S_1&H-t-2&t+2&2\le t\le H-2&-H^2+3H-t^2+13t-8\\
 Z&H-t-3&t+2&4\le t\le H-3&-H^2+7H-t^2+9t-12\\
 A_0&H-t-3&t+1&3\le t\le H-3&-H^2+7H-t^2+7t-20\\
 A_1&H-t-2&t&4\le t\le H-2&-H^2+7H-t^2+11t-30.
\end{array}
\]
We verify the pair inequalities and the bounds. By
\eqref{eq:app-out}, \eqref{eq:app-minimum-row} is equivalent to
\begin{equation}
\label{eq:app-scores}
\begin{aligned}
 G(a)+G(b')&\le\sum_vb_v-\lambda+2\ind{a\ge m+2}
                                     +b_{a+1}\ind{b'=a+2},\\
 G(v)&=W(v)+\nu_b(v)-\lambda\ind{v>t}.
\end{aligned}
\end{equation}
In $S_0$, $G(v)=g$ for $v\ge0$ and $G(v)\le2$ for $v<0$,
with $\sum b_v-\lambda=2g-2$. In $S_1$, $G(0)=g-2$. In $Z$,
$G(1)=g-2$ and $G(0)=3$, again with $\sum b_v-\lambda=2g-2$.
These observations give \eqref{eq:app-scores}, including pairs
$R$ with $\min(R)<m+2$. In $A_u$, $G(v)=g$ for
$v>0$, $G(0)\le g$, and $\sum b_v-\lambda=2g$.

The weights are nonnegative in the listed ranges except possibly
$\theta_0=-1$ for $S_1,t=2$, where $m_0=1$ is exact. Summing
\eqref{eq:app-flatten} and using
\eqref{eq:app-minimum-singleton}--\eqref{eq:app-anchor-bound}
gives the table. In particular,
\[
 \theta(b)=\frac{H^2-2Ht-3H+3t^2-5t+16}{2}-2\varepsilon
 \quad\text{in }S_\varepsilon.
\]
The maxima of $V$ are $t$ in $S_\varepsilon,Z$, $t-1$ in $A_0$,
and $t-2$ in $A_1$. The integer bounds
\[
 -t^2+7t\le12,\quad -t^2+9t\le20,\quad
 -t^2+11t\le30,\quad -t^2+13t\le42
\]
make the five rows nonpositive for $H\ge7,8,8,6,7$, respectively.
They also settle $S_0,S_1$ at $(H,t)=(6,2)$.

This choice covers later first singletons as well. At these
height thresholds, evaluate the weights at the upper endpoint
of the indicated range whenever the actual first singleton is
at least that large. Then $\lambda=0$, all weights are
nonnegative, and the smaller entry counts are still required. Thus
no exact singleton or mass condition at the auxiliary cutoff
is needed. This includes $t=\infty$.

\subsubsection{The Smallest First Singletons}
\label{app:small-singletons}

In $S_1$, $t=1$ is impossible. The only pair with both entries at
most one is $(-1,1)$, which can occur at most once, whereas
\eqref{eq:app-low-pairs} requires at least two.

For $S_0,t=1,H\ge5$, use
$b=\delta_{-2}+\delta_0+\sum_{i=1}^H\delta_i$. Unless $z=1$,
fix the unique pair containing one, and take
\begin{equation}
\label{eq:app-first-one}
 \lambda=H-3,\quad \theta_{-2}=2,\quad\theta_2=H-4,
 \quad\theta_v=(H-v-1)_+\quad(v\ge3),
\end{equation}
with other weights zero. For pairs not containing one,
\[
 W(R)\le\operatorname{out}_{b+\delta_1}(R)
                          +\lambda(h_1(R)-1),
\]
since every score $W+\nu_{b+\delta_1}-\lambda\ind{v>1}$ equals
three. The fixed pair $(1,j)$ contributes twice each missing-one
and missing-$j$ indicator in \eqref{eq:app-incidence}. Since
$\theta(b)=2+(H-4)(H-1)/2$, \eqref{eq:app-reserve} gives
\[
 \mathcal M\le\frac{-H^2+7H-6}{2}\le0\quad(H\ge6).
\]
For $z=1$, its interaction instead gives
$\mathcal M\le-\theta(b)$. At $H=5$, the only positive constants
are one for $(F,z)=((-2,1),-1),((-1,1),-2)$. A required zero
lies in a pair with slack at least one unless that pair is
$(-2,0)$. In the latter case it supplies an extra $-2$, whose
weight two gives the bound instead.

For $Z,t=3$, at least two pairs have both entries at most three.
Only $(0,2)$ avoids using the unique three, so fix $F=(0,2)$. For
$A_1,t=3$, at least one such pair exists, among $(0,2),(0,3),(1,3)$.
After subtracting $F$ from the full required counts, use the following
weights in \eqref{eq:app-reserve}, for $H\ge6$:
\begingroup
\small
\[
\begin{array}{c|c|c|l|l}
 &F&\lambda&\text{nonzero weights}&\text{bound for }\mathcal M\\\hline
 Z&(0,2)&H-5&\theta_v=H-v\ (4\le v<H),\ \theta_H=1
                 &-H(H-5)/2\\
 A_1&(0,2)&H-6&\theta_2=2,\ \theta_3=1,\ \theta_v=H-v\ (4\le v<H),\ \theta_H=1
                 &-(H-4)(H-3)/2\\
 A_1&(0,3)&H-6&\theta_2=3,\ \theta_4=H-6,\ \theta_v=(H-v-1)_+\ (v\ge5)
                 &-(H-5)(H-4)/2\\
 A_1&(1,3)&H-6&\text{same weights}&-(H-5)(H-4)/2-1.
\end{array}
\]
\endgroup
Before removing the singleton, the first two rows have scores
three on $0,1,2,3$, five above three, and
$\sum b_v-\lambda=6$. The interaction with $(0,2)$ contributes at
least twice the number of entries above three. For $F=(0,3)$, the
scores at $0,1,2,4$ are $3,4,4,2$, every score above four is four,
and $\sum b_v-\lambda=6$. For $F=(1,3)$, the corresponding scores
are $2,3,3,2$, and $\sum b_v-\lambda=5$. The overlap term of
\eqref{eq:app-out} handles $(0,2)$; the interaction with $F$
handles every other excess. Removing the singleton only improves
the pair inequalities. Summing the weights gives the stated
bounds.

\subsubsection{The Remaining Heights}
\label{app:boundary-weights}

For the remaining cases of Lemma~\ref{lem:app-uniform}, each vector
is $(\theta_0,\ldots,\theta_H)$; unlisted weights are zero. Use the
required counts of Section~\ref{app:minimum-entry} and compare the
scores in \eqref{eq:app-scores}. The indicated terms of
\eqref{eq:app-remainder} cancel each positive constant from
\eqref{eq:app-minimum-singleton}--\eqref{eq:app-anchor-bound}.
We give the first such calculation in full.

For $S_0$, take
\[
\begin{array}{cc|r|l|r}
 H&t&\lambda&\theta&K_0\\\hline
 5&2&2&(1,1,-1,1,0,0)&0\\
 5&3&1&(1,1,0,0,0,0)&1\\
 5&4&1&(1,2,0,0,-2,0)&0\\
 5&5&0&(1,1,1,1,0,-1)&-1\\
 6&3&2&(1,1,0,0,1,0,0)&-1\\
 6&4&2&(1,2,0,0,-2,0,0)&-2\\
 6&5&2&(1,0,0,0,0,-2,0)&0\\
 6&6&0&(3,2,0,1,0,0,-1)&-1\\
 6&\infty&0&(3,2,0,0,1,0,0)&0\\
 7&5&3&(1,0,0,0,0,-2,0,0)&-3\\
 7&6&1&(3,2,0,1,0,0,-2,0)&-2\\
 7&7&0&(4,3,1,0,1,0,0,-1)&-4\\
 7&\infty&0&(4,3,1,0,0,1,0,0)&-3.
\end{array}
\]
Here $K_0$ is the maximum bound from
\eqref{eq:app-minimum-singleton}--\eqref{eq:app-minimum-pair}.
For $S_\varepsilon$, subtract $2\varepsilon$ from $\theta_0$;
the bound becomes $K_0+3\varepsilon-2\varepsilon\lambda$.
For example, take $S_0,H=5,t=3$. The required counts on levels
$0,\ldots,5$ are $b=(1,2,2,1,1,1)$, and the chosen weights have
$\lambda=1$ and $\theta(b)=3$. The scores in \eqref{eq:app-scores} are
$(0,2,4,5,5,5,4,4)$ on levels $-2,-1,0,\ldots,5$.
For a pair with negative first entry, their sum is at most
$\sum b_v-\lambda=7$; otherwise it is at most nine, except for
$(1,3)$, where the overlap term $b_2=2$ supplies the inequality.
Formula~\eqref{eq:app-minimum-singleton} gives $-4$ for $z=-2$.
With $F=(-2,j)$, \eqref{eq:app-minimum-pair} instead gives
\[
 K=-3+V(z)+V(j)-q(z,F)\le1.
\]
Only $V(5)=2$ exceeds one, so positivity forces $j=z=5$.
The remaining required counts are $(1,2,2,1,1,0)$. The possible pairs
$(-2,0),(0,2),(0,3),(0,4),(0,5)$ containing the required zero
have pair slacks $6,5,3,1,1$, respectively, in
\eqref{eq:app-remainder}. Thus $\Sigma\ge1$, cancelling $K=1$.

The adjustment of $K_0$ gives the extra $S_1$ cases at
$(5,2)$ and $H=6,t=3,4,5$; Section~\ref{app:general-weights}
gives $(6,2)$.
For $S_0,H=5,t=\infty$, use $\lambda=0$ and
$\theta=(2,1,0,1,0,0)$. Again only $F=(-2,5),z=5$ has positive
constant one. The pairs with zero slack are $(0,4),(1,4),(2,5),(3,5)$;
the required counts force $(1,4)$ and $(3,5)$, whose interaction has
an unused part at least one.

For $Z,H=7$, the triples
\[
\begin{gathered}
 (t,\lambda,\theta)=(4,2,(0,0,1,0,-1,1,0,0)),\\
 (5,1,(0,0,2,1,0,-1,0,0)),\qquad
 (6,1,(0,0,1,2,0,0,-2,0))
\end{gathered}
\]
give zero bounds. For $t=\infty,7$, take $\lambda=0$ and,
respectively,
\[
 \theta=(0,1,2,1,1,1,0,0),\qquad(0,1,2,1,1,0,0,0).
\]
The bounds are $-2,-1$. The latter uses $m_7=1$, which rules out
$j=z=7$ in \eqref{eq:app-minimum-pair}.

For $Z,H=6$, a singleton zero already gives a nonpositive bound.
Otherwise fix $F=(0,j)$ and use
\[
\begin{array}{c|r|l|l}
 t&\lambda&\theta&\text{positive }(z,j):K\\\hline
 \infty&0&(0,0,1,1,1,0,0)&(5,6):1,\ (6,5):1,\ (6,6):2\\
 4&1&(0,0,1,0,-1,0,0)&(1,6):1,\ (5,5):1,\ (5,6):2,\ (6,5):2,\ (6,6):3\\
 5&0&(0,0,2,1,0,-1,0)&(5,6):1,\ (6,5):1,\ (6,6):3\\
 6&0&(0,0,1,2,0,0,-1)&(3,3):1,\ (5,5):1,\ (5,6):1,\ (6,5):1.
\end{array}
\]
For $t=\infty$, the two required fours occupy distinct pairs,
each with slack at least one, so $\Sigma\ge2\ge K$. For $t=4$,
each of the two required threes gives at least one. This suffices
except at $(z,j)=(6,6)$, where the required four gives at least
two more. It cannot share a pair with a three, so $\Sigma\ge4>3$.
For $t=5$, use the two required ones, each contributing at least
one, except at $(6,6)$, where the two required fours each
contribute at least two.

For $t=6$, a required four has positive pair slack at
$(z,j)=(5,5),(6,5)$, and a required one does so at $(5,6)$.
At $(3,3)$, the required six has positive pair slack unless it
lies in $(3,6)$. But the fixed entries already use the two
required threes, so this pair gives $e_3\theta_3\ge2$ instead.
Thus every positive constant in the table is cancelled.

For $A_1,H=6$, use the same $t=4$ weights and subtract
$\delta_3$ from the $t=5,6$ weight vectors. The only positive
constants are one at
\[
 (t,z)=(4,0),(5,0),(5,6),(6,0).
\]
A required two has positive pair slack when $z=0$, and the remaining
one does so at $(5,6)$. For $t=\infty$, take
$\lambda=0$, $\theta=(0,0,1,0,1,0,0)$, giving bound zero.
Together with the preceding two subsubsections, these cases
prove Lemma~\ref{lem:app-uniform}.

\subsection{Minimum at Most Minus Two}
\label{app:negative-minimum}

By Lemma~\ref{lem:app-support}, a counterexample has minimum at
most zero, every positive value through $H$, and mass at most
$k+2$. The existence of a $pTs$ object also gives $H\ge1$.
For minima at most $-2$, the factor bounds suffice at large
heights. At small heights we will also need the restrictions
on the original outer sequences and tableau rows.

\subsubsection{Minimum Minus Two and Height at Least Four}
\label{app:min-two-large}

\begin{lemma}
\label{lem:app-min-two}
Suppose the minimum is $-2$, every positive value through $H$
occurs, and the mass is at most $k+2$. Every factorization into
$k$ dual pairs and one singleton has $\mathcal M\le0$ for
$H\ge5$. For $H=4$, it has $\mathcal M\le1$, and a positive
value is incompatible with a $pTs$ object of the same content.
\end{lemma}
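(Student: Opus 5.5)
The plan is to treat this as case $S_0$ of the uniform machinery and reuse Lemma~\ref{lem:app-uniform} wherever it already applies. By hypothesis the content has minimum $-2$, zero is present (otherwise an ascent of $(-1):B^{\mathrm{rev}}$ could not reach the positive values from $-1$, and the minimum $-2$ must be adjacent to something), every positive value through $H$ occurs, and $\mass(X_0)\le k+2$. These are exactly the standing assumptions of $S_0$ with $\varepsilon=0$: $r=k$ pairs, $c=1$, $\varphi=w$. The mass assumption gives the cut inequality \eqref{eq:app-cut} with $\kappa=1-t$. Hence for $H\ge5$ the claim $\mathcal M\le0$ is precisely the $S_0$ assertion of Lemma~\ref{lem:app-uniform}. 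I will first check that the factorization produced by Lemma~\ref{lem:app-factorization} falls under that lemma, so that no additional argument is needed there.

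For $H=4$ I will rerun the weighted bound \eqref{eq:app-reserve}, using the minimum-entry device of Section~\ref{app:minimum-entry}. I split on $t\in\{1,2,3,4,\infty\}$. For each case I choose $\lambda$ and $\theta$ as in the boundary tables of Section~\ref{app:boundary-weights}. Concretely, I start from the flattening choice \eqref{eq:app-flatten} with $g=t+2$ and $\lambda=(H-t-2)_+$, then adjust by hand so that the scores in \eqref{eq:app-scores} stay within $\sum b_v-\lambda$. Using $z=-2$ gives \eqref{eq:app-minimum-singleton}, and using $F=(-2,j)$ gives \eqref{eq:app-minimum-pair}. The expectation is that every resulting constant $K$ is at most $1$. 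Wherever $K=1$, I will try to drive it down to $0$ as in the worked $H=5$ example, by exhibiting a required entry whose pair slack is positive so that $\Sigma\ge1$ in \eqref{eq:app-remainder}. When that works, $\mathcal M\le0$ outright; when it does not, I record the few configurations $(t,F,z)$ that achieve $\mathcal M=1$. By \eqref{eq:app-tightness}, each such configuration needs $\Sigma=\Omega=0$. So it is a tight configuration: every required entry sits in a zero-slack pair, there are no excess entries, and there is no backward edge.

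The final and hardest step is to rule out these tight configurations for an actual $pTs$ object. The factorization of Lemma~\ref{lem:app-factorization} forgets the row structure, so here I return to the original object $(p,T,s)$. First, by Lemma~\ref{lem:app-low-capacity}, $m_{-2}+m_{-1}\le k$, and these entries lie only in the first tableau column. Second, $0:p$ is dual and $s:(-1)$ is reverse dual, so $p$ contains no nonpositive entries and $s$ contains no entries $\ge -2$ except possibly $s_{-1}\le -3$; this is impossible here, so $s$ is nearly trivial and the negative and zero entries are forced into column one. Third, the tableau rows are dual. The tight configurations prescribe the pairs to be things like $(-2,0)$ or $(0,2),(1,3),(2,4)$ in specific multiplicities, with $z$ near $H$. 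I would show that realizing the multiplicities of $-2$ and $0$ as first-column entries forces either a second column whose length exceeds $k+1-\max(|p|,|s|)$, or a loss pair in $\mathrm{RR}(T)$ not seen by the factor bound; either gives $\Omega\ge1$ or $\Sigma\ge1$, contradicting tightness. I expect this to be the main obstacle, since it requires translating the abstract pair multiplicities back into tableau row positions case by case. If the direct argument stalls, I would first try a directed-triangle argument, which forces $\Omega\ge1$ once three distinct pair types are present.
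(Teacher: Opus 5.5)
Your treatment of $H\ge5$ via case $S_0$ of Lemma~\ref{lem:app-uniform} is correct when zero is present, but your opening claim that zero must be present is false, and it drops a case the lemma has to cover. The hypotheses constrain only the content. Even if a simple lower or upper exists, all positive values may lie in $A$, because the leading $0$ of $0:A$ is not part of the content: for instance $A=(1,2,\ldots,H,-2,\ldots)$ with $B$ consisting of $-2$'s. Lemma~\ref{lem:app-support} explicitly allows $m_0=0$. For such content the augmented first singleton is $t=0$, while $S_0$ assumes zero is present with required count $b_0=1$, so Lemma~\ref{lem:app-uniform} does not apply. The case is also used later: Lemma~\ref{lem:app-compression} invokes the present lemma precisely for zero-absent content at $H=4$. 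That is where the clause about $pTs$ objects is needed.

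The missing step is a separate argument for $m_0=0$. With zero absent and minimum $-2$, no dual pair has two negative entries. So if $A$ counts positive--positive pairs and $e=\mathbf{1}_{\{z>0\}}$, the mass bound gives $A+e\le1$. Raising every $-2$ to $-1$ increases the weight by $n=m_{-2}$ and increases no $q$-value. Splitting the possible positive--positive pair $(a,b)$ into $(-1,a),(-1,b)$ then gives $\mathcal M\le F+2(1-e)-2A-n$, where $F=\sum_{i=1}^H(i-1)m_i-2\sum_{j-i\ge2}m_im_j\le(H-1)(4-H)/2$. This yields $\mathcal M\le-1$ for $H\ge5$. At $H=4$ it yields $\mathcal M\le1$, and a positive value forces $A=e=0$: there are $k$ negative--positive pairs plus a negative singleton, so $m_{-2}+m_{-1}=k+1$, contradicting Lemma~\ref{lem:app-low-capacity}.

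With zero present, the paper gets $\mathcal M\le0$ at $H=4$ outright. It fixes a pair containing zero when $m_1\ge2$, or the unique pair containing one when $m_1=1$, and chooses weights accordingly. So your detour through tableau rows is unnecessary. It also rests on incorrect claims. Since $s:(-1)$ is reverse dual, every entry of $s$ is at least $1$, not at most $-3$. Zero can sit in the second column in a row $(-2,0)$; only $-2,-1$ are confined to the first column. Finally, the stock weights do not deliver $K\le1$ at $H=4$. The $S_0$ row of the table gives only $\mathcal M\le2$ at $t=2$, and the weights \eqref{eq:app-first-one} give only $\mathcal M\le3$ at $t=1$. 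Fresh weights are genuinely needed there.
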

\begin{proof}
First suppose zero is absent. Let $A$ count pairs with two
positive entries, let $e=\ind{z>0}$, and put $n=m_{-2}\ge1$.
The mass bound gives $A+e\le1$. Raise all $-2$ entries to $-1$.
Their total weight increases by $n$, and no $q$-value increases.
With $F=\sum_{i=1}^H(i-1)m_i-2\sum_{j-i\ge2}m_im_j$, we have
\begin{equation}
\label{eq:app-no-zero}
 \mathcal M\le F+2(1-e)-2A-n.
\end{equation}
To see this, use the pairs $(-1,i)$, whose mutual $q$-value is
$2\ind{|i-j|\ge2}$. A positive singleton has the same
interactions. If the possible positive-positive pair $(a,b)$
is present, split it into $(-1,a),(-1,b)$. The weight increases
by two; the new mutual $q$-value two cancels the former
interaction with the negative singleton. The only additional
term in the original interactions is
$2m_{a+1}\ind{b=a+2}$, which may be discarded. This gives
\eqref{eq:app-no-zero}.

Starting with one occurrence of each positive value, an additional
occurrence of $i$ changes $F$ by at most
\[
 i-1-2\bigl(H-|[i-1,i+1]\cap[1,H]\cap\mathbb Z|\bigr)
          \le4-H.
\]
Hence $F\le(H-1)(4-H)/2$. For $H\ge5$, the bound in
\eqref{eq:app-no-zero} is at most $-1$. For $H=4$, it is at
most one, and positivity forces $A=e=0$. There are then $k$
pairs joining a negative entry to a positive entry, and a
negative singleton, contrary to
$m_{-2}+m_{-1}\le k$ for a $pTs$ object.

Now let $H=4$ with zero present and $m_1\ge2$. Use the required
counts $b_{-2}=b_0=b_2=b_3=b_4=1$, $b_1=2$, $b_{-1}=0$.
For $z=0$, Lemma~\ref{lem:app-incidence} with $\theta_{-2}=1$
gives bound zero. Otherwise fix a pair $G$ containing zero and use
\eqref{eq:app-reserve} with $\lambda=0$:
\[
\begin{array}{c|l|l}
 G&\text{nonzero weights}&z:K>0\\\hline
 (-2,0)&\theta_2=2&\text{none}\\
 (0,2)&\theta_1=1&\text{none}\\
 (0,3)&\theta_{-2}=3&4:1\\
 (0,4)&\theta_{-2}=3,\ \theta_2=1&4:1.
\end{array}
\]
Formula~\eqref{eq:app-out} gives nonnegative pair slacks. For
$(G,z)=((0,3),4)$, a required two has positive pair slack. For $((0,4),4)$,
a strict inequality forces $(2,4)$; for each possible pair with zero slack
$R=(-2,3),(-1,3),(0,3)$, $\widehat q(R,(2,4))=1$,
contributing one to $\Sigma$.
Thus $\mathcal M\le0$.

If $m_1=1$, fix the unique pair containing one, unless $z=1$.
Use the full required counts $\{-2,0,1,2,3,4\}$ and the following
choices in \eqref{eq:app-reserve}, with the cut at one:
\[
\begin{array}{c|c|l|r}
 F&\lambda&\text{nonzero weights}&\text{maximum bound}\\\hline
 (-2,1)&0&\theta_0=1&0\\
 (-1,1)&0&\theta_{-2}=\theta_0=1&0\\
 (1,3)&2&\theta_{-2}=\theta_2=2&-3\\
 (1,4)&2&\theta_{-2}=2,\ \theta_2=1&-1.
\end{array}
\]
No remaining pair contains one. Formula~\eqref{eq:app-out}
gives the pair inequalities and constants. For $z=1$, the
calculation of \eqref{eq:app-first-one} with $H=4$,
$\lambda=1$, and $\theta_{-2}=2$ gives $\mathcal M\le-2$.
For zero-present content at $H\ge5$, apply
Lemma~\ref{lem:app-uniform}.
\end{proof}

\subsubsection{Raising Smaller Minimum Entries}
\label{app:compression}

\begin{lemma}
\label{lem:app-compression}
Suppose $H\ge1$, every positive value through $H$ occurs, the
mass is at most $k+2$, and the minimum is at most $-3$. Then
every factorization into $k$ dual pairs and one singleton has
$\mathcal M\le0$.
\end{lemma}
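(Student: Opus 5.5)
The plan is to reduce to the case of minimum $-2$, already handled by Lemma~\ref{lem:app-min-two} and Lemma~\ref{lem:app-uniform}, by a compression that raises every entry below $-2$ and does not decrease $\mathcal M$. This mirrors the ``raise all $-2$ entries to $-1$'' step in the proof of Lemma~\ref{lem:app-min-two}.

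\textbf{Setup.} Fix a factorization into $k$ dual pairs and one singleton whose content has minimum $m\le-3$. Let $N$ be the set of entries with value $\le-2$. The weight $w(v)=v+2$ on negative levels is nonpositive on $N$ and strictly decreasing as $v$ decreases. An entry $v\le-3$ is at distance $\ge 3$ from every positive value. Consequently its only possible dual-pair partners, and the only entries it can meet in a $C$-count, lie in $\{v,v\pm1\}$ or are other very negative entries.

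\textbf{Compression step.} Apply the map $v\mapsto v+1$ to all entries at the current minimum level $m$. If $m$ and $m+2$ form a pair, that pair stays dual: its entries become $(m+1,m+2)$, and dual here requires a difference $\ge2$, which fails. So I would not raise single levels; I would shift the whole block of levels $\le -2$ upward uniformly.

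\textbf{Two effects to track.}
\begin{enumerate}
\item[(a)] The weight $\sum_{v\in N} w(v)$ increases by $|N|$.
\item[(b)] $Q$ changes only through interactions between $N$ and the levels $-1,0$. A pair or singleton meeting $N$ can gain an interaction with the $-1$ or $0$ entries. This costs at most $2$ per $N$-entry in $q$, but $q$ only counts mismatches, so raising entries can only reduce $q$ or increase it by a bounded amount.
\end{enumerate}

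The inequality I would aim for is
\[
\mathcal M(\text{original})\le\mathcal M(\text{shifted})-|N|+2\,\#\{\text{new adjacencies}\},
\]
together with the bound $\#\{\text{new adjacencies}\}\le |N|$ via the matching property. Because the block $N$ is separated from level $-1$ by a gap of at least one level (the minimum is $\le-3$ and levels $-2$ may be absent), I expect the gained interactions to number at most the count of entries at the top level of $N$. The correction should therefore be absorbed by the weight gain.

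Iterating until the minimum is $-2$ lands in the hypotheses of Lemma~\ref{lem:app-min-two}. That lemma is used together with Lemma~\ref{lem:app-uniform} when zero is present. The height $H$, the full positive support, and the mass are unchanged, since only nonpositive entries move and mass depends only on levels $\ge0$.

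\textbf{Cases outside the earlier lemmas.} The main obstacle is the $H\le3$ range and the $H=4$ case. Lemma~\ref{lem:app-min-two} gives only $\mathcal M\le1$ at $H=4$, excluding the remainder via a $pTs$ capacity argument that is not available for bare factorizations. Here I would instead exploit that the strict gain $|N|\ge1$ in (a) exceeds the adjacency cost when the minimum is $\le-3$: at least one level gap is consumed, giving an extra $-1$ that converts $\mathcal M\le1$ into $\mathcal M\le0$.

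For small $H$ I would bound directly. With few positive levels, $\sum m_v w(v)$ over positive $v$ is at most about $\sum_{v\ge3}(v-2)m_v$. Meanwhile every entry $\le-3$ contributes $w\le-1$ and has $q\ge2$ against each positive-containing pair. I would apply Lemma~\ref{lem:app-incidence} with weights $\theta_v=1$ on the negative levels and $\theta=0$ elsewhere, and check the pair inequalities \eqref{eq:app-row-inequality} using \eqref{eq:app-out}. This small-$H$ verification is where I expect most of the casework.
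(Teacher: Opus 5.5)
Your overall plan matches the paper's: compress the minimum up to $-2$, invoke Lemma~\ref{lem:app-min-two} for $H\ge4$, and argue directly for $1\le H\le3$. The compression map you propose, however, does not work. Shifting every level $\le-2$ up by one reproduces the failure you noticed for single-level shifts, just at the threshold: the dual pair $(-3,-1)$ becomes $(-2,-1)$, and $(-2,0)$ becomes $(-1,0)$.

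Two of your supporting claims are also false. $N$ is not separated from level $-1$ by a gap, since $-2\in N$ may occur. And an entry $v\le-3$ can form a dual pair with any $b\ge v+2$, including positive values, so its partners are not confined to nearby or very negative levels.

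Once a factor is not dual, the matching property behind $q\ge0$ and Lemma~\ref{lem:app-incidence} fails (e.g.\ $q((0),(-1,0))=-2$). So Lemma~\ref{lem:app-min-two} cannot be applied to your shifted configuration.

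The missing idea is to move one level at a time and to move $(m,m+2)$ as a unit. The paper sends a singleton $m$ to $m+1$, sends $(m,m+2)$ to $(m+1,m+3)$, and sends $(m,b)$ with $b\ge m+3$ to $(m+1,b)$. This is no longer a uniform level shift, so you must check that no directed count $C(F,G)$ decreases. The key case is an unchanged entry $v\ge m+1$: the values matched by $(m,m+2)$ grow from $\{m+1,m+2\}$ to $\{m+1,m+2,m+3\}$ in one direction, and from $\{m+1,m+2,m+3\}$ to $\{m+1,\ldots,m+4\}$ in the other.

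Your item (b) has the bookkeeping backwards. Newly gained interactions lower $q$ and can only help. The real danger is losing interactions: a uniform shift never does this, but the unit move of $(m,m+2)$ could, and the check above is what rules it out.

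This correction changes the rest of your argument.
\begin{enumerate}
\item[(i)] The last step $(-3,-1)\mapsto(-2,0)$ gains no weight and can create a zero. Mass is defined through the augmented content, so it is sensitive to whether zero is present. You therefore need a separate argument that mass does not increase.
\item[(ii)] At $H=4$ your uniform gain of $|N|\ge1$ is no longer available. The paper splits into two cases. If zero is present after compression, the zero-present case in the proof of Lemma~\ref{lem:app-min-two} already gives $\mathcal M\le0$. If zero is still absent, no $(-3,-1)$ was moved, so every changed factor at the last step gained weight. This turns $\mathcal M\le1$ into $\mathcal M\le0$ for the original factors.
\item[(iii)] Your small-$H$ plan is not yet an argument. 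Setting $\theta_v=1$ on negative levels contributes nothing to $-\theta(b)$ unless you also impose required counts there, so it only makes the pair inequalities harder. The paper compresses only down to $-3$ and works there with $b_i=1$ for $1\le i\le H$ and $\theta_1=1$ (or $2$ when $H=3$). It uses the fixed-minimum bounds \eqref{eq:app-minimum-singleton}--\eqref{eq:app-minimum-pair} with $\lambda=0$. The only positive constant, one at $j=z=-1$ for $H\le2$, is cancelled by the pair slack of the pair containing a required one.
\end{enumerate}
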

\begin{proof}
For a current minimum $m$, change a singleton $m$ to $m+1$,
change $(m,m+2)$ to $(m+1,m+3)$, and change $(m,b)$ to
$(m+1,b)$ when $b\ge m+3$. Leave all other factors unchanged.
The directed counts $C(F,G)$ do not decrease, so no $q$-value
increases. For the only special check, translate $m$ to zero.
Against an unchanged entry $v\ge1$, the outgoing indicators of
$(0,2)$ increase from $\{1,2\}$ to $\{1,2,3\}$, and the
incoming indicators from $\{1,2,3\}$ to $\{1,2,3,4\}$.
Between two changed factors, the lower equalities and upper
adjacent pairs persist, and all lower-upper contributions are
zero.

For $m\le-4$, only negative entries change and their weights
increase. Positive support, zero multiplicity, mass, and height
are unchanged. Repeat until the minimum is $-3$. The next step,
to minimum $-2$, preserves the weight of
$(-3,-1)\mapsto(-2,0)$; every other changed factor gains weight.
Positive multiplicities remain fixed and mass cannot increase.
Indeed, only a previously absent zero can change the first
singleton, and any new first positive singleton $s$ satisfies
\[
 s+1+\sum_{v>s}m_v\le1+\sum_{v>0}m_v,
\]
the old mass.

For $H\ge5$, apply Lemma~\ref{lem:app-min-two}. For $H=4$,
the resulting expression is nonpositive if zero is present.
If zero is still absent, no pair $(-3,-1)$ was changed, so the
expression increased by at least one. Its final value is at
most one, again giving the result for the original factors.

For $1\le H\le3$, work directly at minimum $-3$. Use
$b_i=1$ for $1\le i\le H$, and take $\theta_1=1$ for
$H=1,2$, or $\theta_1=2$ for $H=3$, with other weights and
$\lambda$ zero. Formula~\eqref{eq:app-out} gives
\[
 W(R)\le\operatorname{out}_b(R)+2\ind{\min(R)\ge-1}.
\]
A singleton $-3$ has a nonpositive constant. Otherwise fix
$(-3,j)$. All constants in \eqref{eq:app-reserve} are
nonpositive for $H=3$. For $H=1,2$, the sole positive constant
is one at $j=z=-1$, and every pair containing a required one
has pair slack at least one. No assumption on zero or the other
negative multiplicities is needed.
\end{proof}

\subsubsection{Minimum Minus Two and Heights One Through Three}
\label{app:min-two-small}

Translate by two, so that the alphabet is $0,\ldots,H$ with
$3\le H\le5$ and $m_0,m_3,\ldots,m_H>0$. Values one and two
may be absent. The weights are $w_2=(0,1,0,-1,0,1)$, restricted to this alphabet. A $pTs$ prefix is empty or a single
four or five. The suffix has at most two entries; its only
two-entry form is $(5,3)$. Keeping the original tableau rows and
reversing this suffix when necessary gives $k$ dual pairs and
one singleton. Either the singleton is at least three or one
of the factors is $(3,5)$.

If $(0,2)$ occurs, fix it. Every other coefficient
$w_2(R)-q((0,2),R)$ is nonpositive. The singleton constants
$1+w_2(z)-q(z,(0,2))$ for $z=3,4,5$ are $0,-1,0$. For $z\le2$
they are at most two, but the retained $(3,5)$ has coefficient
$-2$. Thus a $pTs$ object satisfying the strict $\dinv$
inequality has no $(0,2)$ in this factorization, and
\begin{equation}
\label{eq:app-three-low}
 m_0+m_1+m_2\le k.
\end{equation}
For $H\le4$, equality holds; the weight bound also gives
$m_1\ge m_3$.

Independently, take a $22$-$pTs$ object with the same content.
Keep its first and last entries $u\le3,v\le2$ and apply
Lemma~\ref{lem:app-factorization} to the interior. Let $z$ be
its singleton and put $n_{\mathrm{fix}}=\ind{u\le2}+\ind{z\le2}$. The number of entries at most two is
$k+n_{\mathrm{fix}}+n_{(0,2)}-n_{(3,5)}$, so
$n_{(3,5)}\ge n_{(0,2)}+n_{\mathrm{fix}}$. Fix $n_{\mathrm{fix}}$ pairs $(3,5)$, leaving all other
pairs, including $(0,2)$, in place. For $H\le4$, this forces
$n_{\mathrm{fix}}=n_{(0,2)}=0$.

Put
\begin{equation}
\label{eq:app-min-two-coefficients}
\begin{aligned}
 \rho(t)&=\ind{t\in\{u-1,u\}}+\ind{t\in\{v,v+1\}},\\
 g(R)&=w_2(R)+2\rho(R)-4-q(z,R),\\
 c&=-1+w_2(u)+w_2(v)+w_2(z)
       +2\ind{u\in\{v,v+1\}}+2\rho(z)+n_{\mathrm{fix}} g((3,5)),\\
 b_i&=\bigl(\ind{i\in\{0,3,\ldots,H\}}
             -m_{\{u,v,z\}}(i)-n_{\mathrm{fix}}\ind{i\in\{3,5\}}\bigr)_+,\\
 \eta(R)&=g(R)-n_{\mathrm{fix}} q((3,5),R)-\operatorname{out}_b(R).
\end{aligned}
\end{equation}
Terms with $n_{\mathrm{fix}}=0$ are omitted. By \eqref{eq:app-fixed-bound} and
Lemma~\ref{lem:app-incidence}, the second object's $\dinv$ $h$
satisfies
\[
 2h-\beta_2(X)\le c+\sum_R\eta(R)n_R,
\]
where the sum is over unfixed pairs.

Each fixed endpoint matches at most one entry of a dual pair,
so $g(R)\le w_2(R)$. If $n_{\mathrm{fix}}>0$, the interaction with $(3,5)$ makes
every potentially positive coefficient nonpositive except
possibly that of $(2,5)$. If $n_{\mathrm{fix}}=0$, then $u=3,z\ge3$; the first
endpoint misses $(0,5),(1,4),(1,5)$, giving bounds $-1,-1,0$.
A positive coefficient of $(2,5)$ requires
$u\in\{2,3\}$, $v\in\{1,2\}$. Either $z=0$ contributes two
through $q$, or $b_0=1$ subtracts its possible unit. Thus every
$\eta(R)$ is nonpositive, including $\eta((0,2))$.

Since
\[
 g((3,5))=-2\bigl(2-\ind{u=3}-\ind{v=2}+\ind{z\le1}\bigr),
\]
substitution gives the complete list of positive constants:
\[
\begin{array}{c|c|r|l}
 H&(u,v,z)&c&\text{upper bound for }\eta(R)\\\hline
 3,4&(3,2,3)&3&-3\ind{R=(1,4)}\\
 5&(2,2,2),(3,1,2)&1&-\ind{0\in R}\\
 5&(3,2,2)&4&-3\ind{0\in R}-\ind{4\in R}\\
 5&(3,2,3)&3&-2\ind{0\in R}-\ind{4\in R}\\
 5&(3,2,5)&1&-\ind{0\in R}.
\end{array}
\]
In the first line, the fixed entries contain two threes and no
ones, so
\[
 n_{(1,4)}=m_1-m_3+2+n_{(0,3)}\ge2.
\]
In the other lines a zero remains, and in the middle two a four
remains as well. Their contributions cancel the positive
constant, including when zero and four share $(0,4)$. Thus the
two kinds of $pTs$ objects cannot both satisfy the strict
inequality at these heights. Together with
Lemmas~\ref{lem:app-min-two} and \ref{lem:app-compression}, this
settles every minimum at most $-2$.

\subsection{Nonnegative Content at Endpoints \texorpdfstring{$0,-1$ and $1,0$}{0,-1 and 1,0}}
\label{app:ordinary}

\begin{lemma}
\label{lem:app-ordinary}
Let $X$ have size $2k+1$, full nonnegative support, and mass at
most $k+2$. At either $L=0$ or $L=1$, a $pTs$ object and a
$22$-$pTs$ object cannot both have $\dinv$ greater than
$\beta_L(X)/2$.
\end{lemma}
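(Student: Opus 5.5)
We argue by contradiction and split on the height $H=\max(X)$, using the machinery already set up in the appendix. Assume both a $pTs$ object and a $22$-$pTs$ object on $X$ have $\dinv$ exceeding $\beta_L(X)/2$.

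\textbf{Step 1: extract content restrictions from the $pTs$ object.} By Lemma~\ref{lem:app-low-capacity} (at $L=1$ via full support at endpoint values $1,0$; at $L=0$ using the strict $\dinv$ hypothesis), the $pTs$ object forces $m_0+m_1\le k$. Combined with \eqref{eq:app-early-singleton}, the first singleton $t$ satisfies $t\ge3$ or $t=\infty$, and $m_1,m_2\ge2$ whenever those levels occur. These restrictions are exactly the hypotheses needed to enter the uniform cases.

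\textbf{Step 2: large heights.} Apply Lemma~\ref{lem:app-factorization}. At $L=1$, factor the $pTs$ object into $k$ dual pairs and a singleton; this is case $Z$ of Lemma~\ref{lem:app-uniform}, which gives $\mathcal M\le0$ for $H\ge6$, contradicting \eqref{eq:app-margin}. At $L=0$, factor the $22$-$pTs$ object instead. Retain its first entry $u\in\{0,1\}$ and its last entry $0$; the endpoint conditions make these the only options once $m_0,m_1$ are controlled. This puts us in case $A_u$, which is again nonpositive for $H\ge6$. The point is that at each value of $L$ we use whichever object carries the uniform bound.

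\textbf{Step 3: small heights $1\le H\le5$.} This is the main obstacle. There is no uniform weight choice here, so we proceed by casework in the style of Sections~\ref{app:boundary-weights} and~\ref{app:min-two-small}. For each $(H,t,L)$ we do the following:
\begin{itemize}
\item Fix the forced anchoring factors, namely the retained endpoints, the singleton, and a pair containing zero.
\item Choose weights $\theta$ and $\lambda$, and verify the row inequality \eqref{eq:app-row-inequality} through the scores of \eqref{eq:app-out}.
\item Read off the few positive constants $K$.
\item Cancel each positive constant with a pair slack, an unused incidence term $e_vt_v$, or a directed triangle via \eqref{eq:app-tightness}.
\end{itemize}

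Where the factor bounds alone are too weak, we keep the original tableau rows and outer sequences, as in Section~\ref{app:min-two-small}. This is what couples the two objects: the $pTs$ object yields a count inequality of the form \eqref{eq:app-three-low}, the $22$-$pTs$ object yields the opposite count (for example through forced pairs $(0,2)$ or their absence), and the two counts are incompatible.

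The first attempt for each small case will be $\lambda=0$ together with flattening weights as in \eqref{eq:app-flatten}, evaluated at the endpoint of the table ranges. Tables are added only for the residual $(z,j)$ combinations that carry positive constants. I expect $H=4,5$ at $L=0$ to require the most delicate cancellations, for two reasons: $A_u$ has $c=3-u$, and the retained endpoint zero weakens the incidence terms.
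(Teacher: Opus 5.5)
Your Steps 1 and 2 match the paper's opening. Lemma~\ref{lem:app-low-capacity} together with \eqref{eq:app-early-singleton} excludes a first singleton at one or two. Then case $Z$ (the $pTs$ factorization at $L=1$) and cases $A_0,A_1$ (the $22$-$pTs$ factorization at $L=0$) of Lemma~\ref{lem:app-uniform} settle $H\ge6$. A small correction: at $L=0$ the first entry is in $\{0,1\}$ and the last entry is $0$ because of the endpoint conditions and nonnegativity alone, not because $m_0,m_1$ are controlled.

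\textbf{The gap is Step~3.} Almost all the work of the lemma lies there, and you give only a template. Nothing shows that weight choices plus a count incompatibility like \eqref{eq:app-three-low} close any case with $H\le5$, and the template does not run by itself. For instance, at $L=1$ and $H=3$, every two dual pairs on $\{0,\dots,3\}$, and every such pair together with the singleton, have $q$-value $0$. So the factor bound is exactly $\mathcal M=1+m_0-m_2$, and no choice of weights in Lemma~\ref{lem:app-incidence} makes it nonpositive when $m_0\ge m_2$. This is not an artifact of the method. Take $X=\{0^2,1^2,2^2,3^3\}$ with $k=4$, and the $pTs$ object with $p=(3)$, $s$ empty, and tableau rows $(1,3),(1,3),(0,2),(0,2)$ from bottom to top. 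Its $\dinv$ is $16$, which exceeds $\beta_1(X)/2=31/2$. So for such contents the contradiction must come from the $22$-$pTs$ object at $L=1$. Your plan never says which object is refuted for which content, or how the $22$-$pTs$ object at $L=1$ is to be handled.

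\textbf{What the paper does at $H\le5$ is different in kind.} The cases $H\le2$ and $(L,H)=(0,3)$ are handled directly.

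At heights three and four the paper abandons the dual-pair factorization and keeps the original tableau rows and outer sequences of both objects. It deletes a $(0,2)$ row from each tableau simultaneously; each $\dinv$ drops by at most $2k-1$ while $\beta_L$ drops by $4k-2$. This continues until the displacement $u=m_2-r-2$ reaches one of the terminal configurations \eqref{eq:app-terminal-displacements}. It then combines the row-level loss estimates \eqref{eq:app-loss-zero} and \eqref{eq:app-loss-two} to obtain $h_0(X-L)+h_2(X-L)\le\beta_L(X)$, which is incompatible with the two strict inequalities.

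At height five the paper splits on the number of low entries:
\begin{itemize}
\item If $m_0+m_1+m_2\le k+1$, it uses the $22$-$pTs$ factorization with two entries fixed at each end, the forced count $n_{(3,5)}\ge n_{(0,2)}+n_{\mathrm{fix}}$, and a matching of the remaining $(0,2)$ and $(3,5)$ pairs.
\item If $m_0+m_1+m_2\ge k+2$, it uses a direct loss count on the original $pTs$ rows, giving $2\ell_0-\Delta_L\ge m_1-1>0$, with no incidence lemma at all.
\end{itemize}

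You do mention keeping original rows, but only to extract multiplicity counts. The missing ideas are the explicit loss estimates on those rows, the row-deletion reduction in $k$, and the low-entry split at height five. As written, your proposal lacks the main part of the proof.
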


To prove the lemma, suppose both strict inequalities hold.
For $H\ge6$, Lemma~\ref{lem:app-low-capacity} and
\eqref{eq:app-early-singleton} exclude first singleton one or
two. At $L=1$, apply case $Z$
of Lemma~\ref{lem:app-uniform} to its factorization. At $L=0$,
apply $A_0$ or $A_1$ to the $22$-$pTs$ factorization with two
fixed endpoints. It remains to prove the lemma for $H\le5$.

\subsubsection{Heights at Most Three}
\label{app:ordinary-small}

For $L=0,H\le1$, there is no $pTs$ object. At $H=2$, the outer
sequences have length at most one and every pair is $(0,2)$. Full
support at one forces the sole outer entry to be a final one.
The multiplicities are $(k,1,k)$, the $\dinv$ is $k^2$, and
$\beta_0(X)=2k^2$. For $H=3$, the $22$-$pTs$ outer lengths are
$(2,3)$ or $(3,2)$. Their five entries contain at least three
zeros or ones, and each remaining pair supplies another.
Thus $m_0+m_1\ge k+1$, contrary to
Lemma~\ref{lem:app-low-capacity} under a $pTs$ strict inequality.

For $L=1,H\le2$, full support is impossible for a $pTs$ object:
a final two and rows $(0,2)$ would omit one. The case $H=3$ is
included in the next argument.

\subsubsection{A Joint Bound at Heights Three and Four}
\label{app:joint-loss}

Under the two strict inequalities,
Lemma~\ref{lem:app-low-capacity} gives $r=k-m_0-m_1\ge0$.
We prove the stronger statement that, whenever both kinds of objects
exist on $0,\ldots,4$, with $m_0,m_1,m_2,m_3>0$ and $r\ge0$,
\[
 h_0(X-L)+h_2(X-L)\le\beta_L(X),\qquad L=0,1.
\]
No mass assumption is needed. Choose objects attaining the two
maxima and write $\ell_0,\ell_2$ for their losses.

Here we keep both original tableaux, since we will delete rows
and count losses involving their outer sequences. Put $u=m_2-r-2$.
In the $22$-$pTs$ object, append its possible single-box tableau
entry to $p$, obtaining $P$, and put $S=s$.
Let $f_i=m_P(i)+m_S(i)$. The outer lengths are distinct elements
of $\{2,3,4\}$. Counting entries at most two gives
\begin{equation}
\label{eq:app-displacement}
\begin{gathered}
 A:=n^{22}_{(0,2)}=A_0+u,\qquad
 A_0=\max(|p|,|s|)+1-f_0-f_1-f_2,\\
 n^{00}_{(0,2)}\ge u+1,
\end{gathered}
\end{equation}
where $n^{22}_{(i,j)}$ and $n^{00}_{(i,j)}$ count original tableau
rows. There are $2\max(|p|,|s|)-1$ fixed entries. A length-four
outer sequence has two and four beyond its two affine entries,
giving $A_0\ge-1$.
At $L=0$, the four entries subject to the affine conditions are
at most two, and a length-four sequence supplies another two;
thus $A_0\le0$. At $L=1$, only the second prefix entry can be
above two, so $A_0\le1$, with equality requiring $p=(2,3)$.

Whenever $u>0$ and $A>0$, delete an original $(0,2)$ row from each
tableau. Both column lengths decrease by one, giving the same
outer lengths for parameter $k-1$. The row above the deleted
one has entries at most $1,3$, and the row below has entries at
least $0,2$, so both columns remain affine, including a junction
with a single-box row. Each other entry contributes at most one
$\dinv$ pair with $(0,2)$. Thus each $\dinv$ decreases by at most
$2k-1$, while $\beta_L$ decreases by $4k-2$; a violation of the
joint bound persists. Before deletion, there are at least
$u+1\ge2$ zeros and $r+u+2\ge3$ twos. Support and $r$ are
preserved, and the existence of a paired row gives $k-1\ge2$.
Repeated deletion therefore reduces any counterexample to one of
\begin{equation}
\label{eq:app-terminal-displacements}
 u=0;\qquad u=1,\ A_0=-1,\ A=0;\qquad
 L=1,\ u=-1,\ A_0=1,\ A=0.
\end{equation}

In the $pTs$ object, let $t$ count its $(0,2)$ rows and its possible
suffix entry two. Each gives a distinct loss against every one.
Every other two is in $(2,4)$ or precedes every three. This is
immediate for a prefix two; a single-box row containing two
occurs only with prefix $(2,4)$ and empty suffix, or with empty
prefix and a length-two suffix. A $(2,4)$ row forces a loss with
each three, whichever comes first. Therefore
\begin{equation}
\label{eq:app-loss-zero}
 \ell_0\ge t m_1+(m_2-t)m_3.
\end{equation}
These contributions use distinct adjacent-value pairs. Counting
the $k+u+2$ entries at most two gives the following values of
$t-u$, where $v$ is the possible single-box tableau entry:
\[
\begin{array}{c|c|c}
 (|p|,|s|)&L=0&L=1\\\hline
 (1,0)&2-\ind{p_1=2}&2\\
 (0,1)&2-\ind{s_{-1}=1}&2\\
 (2,0)&2-\ind{v\le2}&\text{--}\\
 (0,2)&3-\ind{v\le2}-\ind{s_{-1}=1}&3-\ind{v\le2}\\
 (2,1)&2-\ind{s_{-1}=1}&\text{--}\\
 (1,2)&3-\ind{p_1=2}-\ind{s_{-1}=1}&3.
\end{array}
\]
Thus $1\le t-u\le3$ at $L=0$, $2\le t-u\le3$ at $L=1$,
and $t\le m_2$. Each original row and nonempty outer sequence
meets two adjacent levels at most once. There are at most
$k-\min(|p|,|s|)+\ind{|p|>0}+\ind{|s|>0}\le k+1$
such factors. Applying this to levels two and three gives
\begin{equation}
\label{eq:app-delta-four}
 \delta:=m_4-r\ge0,\qquad m_3-m_1=m_0-1-u-\delta.
\end{equation}

We next establish, in the cases
\eqref{eq:app-terminal-displacements},
\begin{equation}
\label{eq:app-loss-two}
 \ell_2\ge m_2(m_3+1-L)-e,\qquad
\begin{array}{c|ccc}
 e&u=-1&u=0&u=1\\\hline
 L=0&\text{--}&r&r+m_0-1\\
 L=1&0&m_0-1&m_0-2.
\end{array}
\end{equation}
Write $n_{(i,j)}$ for the original paired-row counts in this
object. Comparing the two orders of any pair of row types,
the positive minimum losses occur precisely for
$(0,2)/(1,3)$, $(0,2)/(1,4)$, $(0,3)/(2,4)$, $(0,4)/(1,3)$, $(1,3)/(2,4)$, each contributing one.
Together with the exact losses involving $P,S$, this gives
\begin{equation}
\label{eq:app-loss-two-expand}
\begin{aligned}
 \ell_2-m_2(m_3+1-L)\ge{}&K+\gamma n_{(0,3)}
          +(m_P(3)+m_S(1))n_{(0,4)}\\
 &+(m_P(0)+m_S(4))n_{(1,3)}\\
 &+(m_P(0)+m_P(3)+m_S(2)+A)n_{(1,4)}\\
 &+(m_P(1)+L-1)n_{(2,4)}+n_{(0,4)}n_{(1,3)},\\
 K={}&\loss(P:S)+A(m_P(1)+m_S(1)+m_S(3))\\
 &\hspace{20mm}-(f_2+A)(f_3+1-L),\\
 \gamma={}&m_S(1)+m_S(4)-m_S(2)-A.
\end{aligned}
\end{equation}
The counted cell pairs are distinct, so the minima may be added
without a common minimizing order.

\paragraph{$L=0,u=0$.}
Here $A=0$. The possible fixed sequences $P:S$ have the forms
\[
 (q,j,v,e,0),\quad(q,j,v,4,2,0,0),\quad(q,0,2,4,v,e,0),
\]
where $q,e\in\{0,1\}$, $j\le q+1$, $v\in\{3,4\}$, and
the original dual inequalities hold. These correspond,
respectively, to outer lengths $(2,3)/(3,2)$,
$(2,4)/(3,4)$, and $(4,2)/(4,3)$. The low-entry count forces
$v\ge3$. In each case $\gamma\ge0$. If $m_P(1)>0$, the
values of $K$ are, respectively, $\ind{j=0}e$ when $q=1$
or $1+e$ when $(q,j)=(0,1)$, then $\ind{j=1}$, then $e$.
They are nonnegative. If $m_P(1)=0$, then $m_P(0)=2$ and
$K+f_2=2f_1$. Using $n_{(2,4)}=r+2-f_2$ and
$n_{(1,3)}+n_{(1,4)}=m_1-f_1$, the bound in
\eqref{eq:app-loss-two-expand} is at least
\[
 K-n_{(2,4)}+2(n_{(1,3)}+n_{(1,4)})=2m_1-r-2\ge-r.
\]

\paragraph{$L=0,u=1$.}
Now $A=f_3=0$ and $f_4=\max(|p|,|s|)-3$. Since
$\gamma\ge-1$, \eqref{eq:app-loss-two-expand} is at least
\[
 \loss(P:S)-f_2-n_{(0,3)}-n_{(2,4)}
       \ge\loss(P:S)+f_0-r-m_0-3.
\]
The short fixed sequences are $(q,j,2,0,0)$ or
$(q,0,2,e,0)$, with $q,e\in\{0,1\}$ and $j\le q+1$.
Each has $\loss(P:S)+f_0\ge4$. Deleting a four and one
additional low entry from a long fixed sequence gives one of
these, without increasing that quantity. This proves
\eqref{eq:app-loss-two} in this case.

\paragraph{$L=1,u=1$.}
Again $A=f_3=0$, so $K\ge0$. The only possible negative
coefficient is $\gamma=-1$ for suffix $(2,0,0)$. It supplies two
fixed zeros, so $n_{(0,3)}\le m_0-2$. Otherwise
\eqref{eq:app-loss-two-expand} is nonnegative, and $m_0\ge2$
follows from \eqref{eq:app-displacement}.

\paragraph{$L=1,u=-1$.}
Necessarily $p=(2,3)$. For a short suffix $(f,e,a)$, with
$a\in\{0,1\}$, $e\le a+1$, and
$\max(3,e+2)\le f\le4$, substitution gives
\[
 K=\ind{f=4}+a\ind{e=0}-\ind{e=2},\qquad
 \gamma=a+\ind{e=1}+\ind{f=4}-\ind{e=2}.
\]
Both are nonnegative. A long suffix is $(4,2,0,a)$ with single-box
entry $v=3,4$; its $K$ is respectively $a,1+a$, and
$\gamma=a$.

\paragraph{$L=1,u=0$.}
If $p\ne(2,3)$, then $A=0$ and at most one fixed three occurs.
Every preceding fixed two gives a loss. A following suffix two
gives a loss either through that three preceding the resumed
suffix four, or, for suffix $(2,1)$, through the second prefix
entry. Thus $\loss(P:S)\ge f_2f_3$. Also
$m_S(1)+m_S(4)\ge m_S(2)$: the sole possible exception
$s=(2,0,0)$ would give $A_0=-1$. Hence $K,\gamma\ge0$.

For $p=(2,3)$, a short suffix $(f,e,a)$ has
$a\in\{0,1\}$, $e\le a+1$, $e+2\le f\le4$,
$A=\ind{f\ge3}$, and
\[
\begin{aligned}
 K&=a\ind{e=0}+A(a+\ind{e=1})-\ind{f=2}-\ind{e=2}-\ind{f=3},\\
 \gamma&=a+\ind{e=1}+\ind{f=4}-\ind{f=2}-\ind{e=2}-A.
\end{aligned}
\]
For suffix $(4,2,0,a)$, $K$ takes the values
$2a,1+a,a-1,2a-2,2a$ for single-box entry $v=0,1,2,3,4$,
respectively, and $\gamma=a-\ind{v\ge3}$.
The complete list with a negative term is
\[
\begin{array}{c|c|rrr}
 s&v&A&K&\gamma\\\hline
 (2,0,0)&\text{--}&0&-1&-1\\
 (3,0,0)&\text{--}&1&-1&-1\\
 (4,2,0,0)&2&0&-1&0\\
 (4,2,0,0)&3&1&-2&-1\\
 (4,2,0,0)&4&1&0&-1.
\end{array}
\]
In every line $m_0=2+A+n_{(0,3)}+n_{(0,4)}$, so the total negative
contribution is at most $m_0-1$. This finishes
\eqref{eq:app-loss-two}.

Finally, expanding \eqref{eq:app-translation} gives
\[
 \Delta_L-2m_2(m_3+1-L)
   =\delta-(2r+3)m_0+3L+u(3+2L-2m_0)+2u^2.
\]
Combining \eqref{eq:app-loss-zero}--\eqref{eq:app-loss-two},
we obtain the following lower bounds for
$\ell_0+\ell_2-\Delta_L$:
\[
\begin{array}{cc|l}
 L&u&\text{lower bound}\\\hline
 0&0&(t-1)\delta+(2r+3-t)(m_0-1)+1\\
 0&1&(t-1)\delta+(2r+4-t)(m_0-2)+2r+1\\
 1&1&(t-1)\delta+(2r+4-t)(m_0-2)+4r\\
 1&0&(t-1)\delta+(2r+2-t)m_0+t-2\\
 1&-1&(t-1)\delta+(2r+1-t)m_0.
\end{array}
\]
Every term is nonnegative, using $t\le m_2=r+u+2$, the bounds on
$t-u$, $r,\delta\ge0$, and $m_0\ge u+1$. Thus
$\ell_0+\ell_2\ge\Delta_L$, proving the joint $\dinv$ bound,
including $m_4=0$.

\subsubsection{Height Five with At Most \texorpdfstring{$k+1$}{k+1} Low Entries}
\label{app:height-five-low}

Assume $m_0\ge1$, $m_2\ge2$, and
$m_0+m_1+m_2\le k+1$. We show that a $22$-$pTs$ object cannot
satisfy the strict $\dinv$ inequality, at either $L=0$ or $1$.
We now use Lemma~\ref{lem:app-factorization}, keeping the two
affine entries at each end, $P=(p_1,p_2)$ and
$S=(s_{-2},s_{-1})$. The interior has $k-2$ pairs and a
singleton $z$; its original rows are no longer needed.

Let $d$ count fixed entries at least three. At $L=0$, $d=0$.
At $L=1$, $d=0,1$, and $d=1$ precisely for $P=(2,3)$.
With $e=\ind{z\le2}$, the number of entries at most two is
$k+2-d+e+n_{(0,2)}-n_{(3,5)}$. Hence
\[
 n_{(3,5)}\ge n_{(0,2)}+n_{\mathrm{fix}},\qquad n_{\mathrm{fix}}=1+e-d\ge0.
\]
Fix $n_{\mathrm{fix}}$ pairs $(3,5)$. Match each remaining $(0,2)$ to a distinct
remaining $(3,5)$, keeping their positions unchanged.

Put $\rho(t)=C(P,(t))+C((t),S)$ and $E=P:(z):S$, and define
\begin{equation}
\label{eq:app-four-ends}
\begin{aligned}
 c_0&=2\dinv(E)-7+w_L(E),\\
 g(R)&=w_L(R)+2\rho(R)-8-q(z,R),\\
 b_i&=\bigl(\ind{i=0}+2\ind{i=2}-m_E(i)\bigr)_+,\\
 \eta(R)&=g(R)-n_{\mathrm{fix}} q((3,5),R)-\operatorname{out}_b(R),
 \qquad c=c_0+n_{\mathrm{fix}} g((3,5)).
\end{aligned}
\end{equation}
Then $2h-\beta_L(X)\le c+\sum_R\eta(R)n_R$, with the sum over
unfixed pairs.

Every coefficient is nonpositive. Each fixed entry matches a
pair at most once, so $g(R)\le w_L(R)$; in particular,
$\eta((0,2))\le-2n_{\mathrm{fix}}$. For $n_{\mathrm{fix}}\ge1$, the interaction with $(3,5)$
handles $(0,3),(0,4),(1,3),(1,4),(1,5)$. For $(0,5)$, if $\rho((0,5))\le3$, the
coefficient is at most $3-2-2n_{\mathrm{fix}}\le-1$. Otherwise all prefix
entries are zero or one and both suffix entries are zero.
Either $z=2$ gives $q(z,(0,5))=2$, or two required twos remain
outside $K((0,5))$. At $L=0$, $\rho((2,4)),\rho((2,5))\le2$, giving
negative coefficients. At $L=1$, $g((2,4))\le0$; a positive
coefficient for $(2,5)$ requires all four fixed entries to meet
it, so none is zero. The singleton zero or the remaining
required zero then subtracts the possible unit.

If $n_{\mathrm{fix}}=0$, necessarily $L=1$, $P=(2,3)$, and $z\ge3$. The
prefix misses at least one match against $(0,3),(0,4),(0,5),(1,4),(1,5)$,
whose coefficients are at most $-1,-2,-1,-1,0$. The other
arguments remain valid.

Put $t=m_S(2)$. Directly,
\[
\begin{aligned}
 g((3,5))&=-4-2L+2d+2t-2\ind{z\le1},\\
 c&=2\dinv(P:S)-7+w_L(P:S)+w_L(z)+2\rho(z)+n_{\mathrm{fix}} g((3,5)).
\end{aligned}
\]
If no fixed entry is two, these formulas give $c\le5$, and
$c\le-1$ when $z=2$. Every positive case leaves two required
twos. Each belongs to $(2,4)$ or $(2,5)$, with coefficient at most
$-3$, or to a matched $(0,2),(3,5)$ pair of factors. Here $d=t=0$
and $n_{\mathrm{fix}}\ge1$, so $g((3,5))\le-4$ and that pair of coefficients
sums to at most $-6$. These disjoint contributions give a
nonpositive bound.

If a fixed entry is two, the positive constants are exactly
those in the following table. A zero remains required in every
line. The last column bounds $\eta((0,j))$ for $j=3,4,5$ and
$\eta((0,2))+\eta((3,5))$.
\[
\begin{array}{c|c|c|c|r|r}
 L&P&S&z&c&\text{coefficient bound}\\\hline
 1&(2,2)&(2,1)&2&1&-1\\
 1&(2,3)&(1,1)&2&1&-1\\
 1&(2,3)&(2,1)&2&4&-5\\
 1&(2,3)&(2,1)&3&3&-3\\
 1&(2,3)&(2,1)&5&1&-5.
\end{array}
\]
For completeness, at $L=0$ the only prefix containing two is
$(1,2)$, and substituting $S=(b,0)$, $b=0,1$, gives no
positive constant. At $L=1,d=0$, let $a,b,f$ count fixed twos,
ones, and zeros, and put $\ell=\loss(P:S)$. Then
\[
 2\dinv(P:S)-7+w_1(P:S)=9-b-2a-2fa-2\ell.
\]
For $t=0$, substituting $a=1,2$ and $f=4-a-b$ makes all three
ranges $z\le1$, $z=2$, $z\ge3$ nonpositive. For $t=1$,
$S=(2,1)$ and $\ell\ge3-a$; only $a=3,z=2$ remains.
Finally $d=1$ fixes $P=(2,3)$, and the suffixes
$(0,0),(1,0),(0,1),(1,1),(2,1)$ give the other four lines. Formula
\eqref{eq:app-four-ends} gives their coefficient bounds, so
the required zero excludes every positive constant.

\subsubsection{Height Five with At Least \texorpdfstring{$k+2$}{k+2} Low Entries}
\label{app:height-five-high}

Write $(a,b,c,d,e,f)=(m_0,\ldots,m_5)$ and put
$r=k-a-b$, $u=c-r-2$. Under both strict inequalities,
$r\ge0$ and $b,c\ge2$; the preceding argument leaves $u\ge0$.
For this loss estimate, keep the original rows of the $pTs$
object and their positions relative to the outer sequences.

Let $x$ count its $(0,2)$ rows, let $q=c-x$, let $\delta$ indicate
a suffix entry two, and put $j=q-\delta$. If $t$ counts threes
in the prefix or first tableau column, then $q+t\le r+1$.
Indeed, for outer lengths $\alpha,\gamma$ and with $\varepsilon$
indicating a suffix one, the number of entries at least two in
the first column is $r+\varepsilon-\min(\alpha,\gamma)$.
The prefix meets $\{2,3\}$ at most once,
$\varepsilon+\delta\le1$, and only one outer sequence contributes
when the other is empty. Thus $0\le j\le q\le r+1$.

Each $(0,2)$ and the suffix two gives $b$ losses. Consider the other
$d-t$ threes. Each remaining prefix or single-box entry two
precedes them; a $(2,4)$ forces a loss against each in either order.
So does a $(2,5)$, except against $(0,3)$: an earlier $(1,3)$ instead gives
one before two. At most $a-x$ rows are $(0,3)$. These losses are
distinct: those at levels one and two use the remaining twos,
and those at levels three and four use the indicated threes.
Since $a+b=c+d+e+f-2r-1$, the loss $\ell_0$ satisfies
\begin{equation}
\label{eq:app-height-five-loss}
 \ell_0\ge(c-j)b+j\bigl(d-(r+1-q)-(a-x)\bigr)
            =cb+j(r-e-f).
\end{equation}
Each original row and nonempty outer sequence meets $\{2,3\}$
at most once. There are at most $k+1$ such factors, so
$c+d\le k+1$, or $e+f\ge r$. Hence
$\ell_0\ge cb-(r+1)(e+f-r)$.

Expanding the bound at $L=0$ gives
\[
 \Delta_0=2r(r+1)+(2c-1)b+1+d+(2-2c)e+(3-2c)f-2df,
 \qquad \Delta_1=\Delta_0-2r-1.
\]
For either $L=0$ or $1$, therefore,
\[
 2\ell_0-\Delta_L\ge b-1+2df-d-f+2u(e+f)\ge b-1>0,
\]
since $2df-d-f=(d-1)(2f-1)+(f-1)\ge0$. This proves
Lemma~\ref{lem:app-ordinary}.

\subsection{Minimum Minus One}
\label{app:min-one}

\subsubsection{A Sharper Mass Bound}
\label{app:mass-sharpening}

Suppose the three maxima exceed $\beta(X_0)/2$ and
$\min(X_0)=-1$. We use the simple lower or upper to sharpen
Lemma~\ref{lem:app-support}, then apply the tableau bounds.
Translate to $X=X_0+1$ and put
\[
 a=m_X(0)\ge1,\quad b=m_X(1),\quad y_j=m_X(j+2),\quad
 r=k-a-b\ge0.
\]
Here $j\ge0$, and the last inequality follows from the
$pTs$ capacity. Every $y_j$ through the maximum is positive. Put
$S=\sum j y_j$, $T_+=\sum_{j\ge1}y_j$, and $e_j=y_j-1$.
Then $\sum y_j=k+1+r$. A simple lower or upper has loss at
least $T_+$ by Lemma~\ref{lem:app-ascent-loss}. Expansion of
\eqref{eq:app-translation} gives
\begin{equation}
\label{eq:app-actual-potential}
 \Delta_1-2T_+
  =2r^2+(2y_0-1)b-S
       -2\sum_{j\ge2}y_j\sum_{i=0}^{j-2}e_i.
\end{equation}
Here one may use
\[
 \sum_{j-i\ge2}y_i y_j
   =S-T_++\sum_{j\ge2}y_j\sum_{i=0}^{j-2}e_i.
\]
A simple lower or upper with the strict $\dinv$ inequality
requires $\Delta_1-2T_+>0$.

If $b=0$, the unshifted suffix $B$ consists entirely of $-1$
entries: a reverse ascent from $-1$ cannot reach a positive
value without passing through the missing zero. Hence
$a\ge|B|\ge k$, while Lemma~\ref{lem:app-low-capacity} gives $a\le k$.
Thus $a=k,r=0$, and \eqref{eq:app-actual-potential} is nonpositive.

Suppose $y_q=1$ and $y_i\ge2$ for $i<q$. Put
$E_j=\sum_{i<j}e_i$, $E_{-1}=0$, and $E=E_q$. For $q\ge1$,
expansion gives
\begin{equation}
\label{eq:app-prefix-estimate}
\begin{aligned}
 \sum_{j=0}^q y_j(j+2E_{j-1})
  &=(q-1)E+q^2+\sum_{j=1}^{q-1}(E_j-j+2e_jE_{j-1})\\
  &\ge(q-1)E+q^2,
\end{aligned}
\end{equation}
since $E_j\ge j$. Each tail coefficient for $j>q$ is at least
$q+1+2E$.

Let $b\ge1$. If $X_0$ has a first positive singleton, write it
as $s=q+1$ and put
$\delta=k+2-\mass(X_0)\ge0$. The size identities give $E=r+\delta$ and
$\sum_{j>q}y_j=a+b+r-q-\delta$. For $q=0$, $E=0$ forces $r=\delta=0$, and
\eqref{eq:app-actual-potential} is at most $-a$.
For $q\ge1$, \eqref{eq:app-prefix-estimate}, the tail estimate,
and $E-e_0\ge q-1$ give
\begin{equation}
\label{eq:app-sharp-mass}
 \Delta_1-2T_+\le-(q+1+2r+2\delta)a-(3q-2)b
                        +q+2\delta(q+1+\delta).
\end{equation}
At $\delta=0$, this is at most $1-3q-2r<0$, since $a,b\ge1$.
Thus a strict inequality forces $\mass(X_0)\le k+1$.

If $b\ge2$, the mass of $X=X_0+1$ is the mass of $X_0$ plus
one, or zero when there is no singleton. Therefore
$\mass(X)\le k+2$, and Lemma~\ref{lem:app-ordinary} at $L=1$
excludes the two $pTs$ strict inequalities.

It remains to take $b=1$. At $\delta=1$, \eqref{eq:app-sharp-mass} gives
\begin{equation}
\label{eq:app-unique-mass}
 \Delta_1-2T_+\le6-(s+2+2r)a.
\end{equation}
For $s\ge3$ this is negative, since $r\ge s-2$. For $s=2$
and original maximum at least three, the tail is nonempty, so
$a+r\ge2$ and the bound is $6-2a(r+2)\le0$. Thus at these
heights a strict inequality for a simple lower or upper forces
either no positive singleton or
\begin{equation}
\label{eq:app-final-mass}
 s+1+\sum_{v>s}m_{X_0}(v)\le k.
\end{equation}
At original maxima at least four, these are the two alternatives
in the following lemma. The smaller heights will be handled by
the bounds of Section~\ref{app:ordinary}.

\subsubsection{A Unique Zero}
\label{app:unique-zero}
\label{app:mixed-small}

Return to the entries of $X_0$, with $m=-1$, $m_0=1$, $H\ge4$,
and every positive value through $H$ present.

\begin{lemma}
\label{lem:app-unique-zero}
Suppose $m_1,\ldots,m_H\ge2$, or the first positive singleton $s$
satisfies $s+1+\sum_{v>s}m_v\le k$. Then every
ordered factorization into $k$ dual pairs and one singleton has
$\dinv$ at most $\beta(X_0)/2$.
\end{lemma}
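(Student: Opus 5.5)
We plan to reduce the lemma to the machinery of Section~\ref{app:uniform}, specifically case $S_1$ of Lemma~\ref{lem:app-uniform}. For any ordered factorization $W$ into $k$ dual pairs and one singleton, \eqref{eq:app-margin} gives $2\dinv(W)-\beta(X_0)\le\mathcal M$. It therefore suffices to show $\mathcal M\le0$ under the stated hypotheses. The content here has minimum $-1$, a unique zero ($m_0=1$), full positive support through $H\ge4$, and the sharpened mass condition. This is exactly the setting $S_1$ with $\varepsilon=1$. The hypothesis $s+1+\sum_{v>s}m_v\le k$ is precisely the stronger assumption $k\ge t+\sum_{v>t}m_v-1+2\varepsilon$ required there, with $t=s$. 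The alternative $m_1,\ldots,m_H\ge2$ is the case $t=\infty$ (the augmented content has its zero and no positive singleton, and since $m_0=1$ the zero is not the first singleton in the relevant sense; we will double check the convention on the added zero).

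Given this identification, Lemma~\ref{lem:app-uniform} already yields $\mathcal M\le0$ for $H\ge7$, at $(H,t)=(5,2)$, and at $H=6$ with $2\le t\le5$. The case $t=1$ is impossible in $S_1$ by the first paragraph of Section~\ref{app:small-singletons}. The remaining work is the finite list of boundary cases:
\begin{itemize}
\item $H=4$, all $t$;
\item $H=5$ with $t\in\{3,4,5,\infty\}$;
\item $H=6$ with $t\in\{6,\infty\}$.
\end{itemize}
For these we follow the pattern of Section~\ref{app:boundary-weights}. We take the $S_0$ weight vectors from the table there, subtract $2$ from $\theta_0$, and read off the adjusted constant $K_0+3-2\lambda$. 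Where this is positive, we fix the minimum entry as in \eqref{eq:app-minimum-singleton}--\eqref{eq:app-minimum-pair}. We then cancel the positive constants using the slack terms of \eqref{eq:app-remainder}: a required value whose containing pair has positive pair slack, or an unused interaction $e_vt_v$. A negative $\theta_0$ is permitted because $m_0=1$ is exact.

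For $H=4$ there is no $S_0$ row, so we design weights directly. We use the required counts $b_{-1}=1$, $b_0=1$, $b_i=2$ below $t$ and $b_i=1$ from $t$ onward. We set $\lambda=H-t-2$ where nonnegative and choose $\theta$ by the flattening rule \eqref{eq:app-flatten}, adjusted at zero. We then check the scores \eqref{eq:app-scores} pair by pair using \eqref{eq:app-out}. This follows how Lemma~\ref{lem:app-min-two} handled $H=4$ at minimum $-2$.

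The main obstacle will be these small-height cases, especially $H=4$ and $H=5$ with $t=\infty$ or $t=H$. There the general table gives positive bounds, and one must find specific forced pairs (for instance, pairs forced to contain the unique zero or a required singleton) to produce $\Sigma\ge K$. If a weight choice fails, we will first try fixing the pair containing the unique zero, as in the $H=4$ table of Lemma~\ref{lem:app-min-two}. Because that zero has multiplicity one, this pair is determined up to a few options $(-1,1),(0,2),(0,3),(0,4)$, and each can be treated with $\lambda=0$ and small weights. Alongside this, we would use the directed-triangle criterion $\Omega\ge1$ from \eqref{eq:app-tightness} to dispose of any residual case with $K\le2$.
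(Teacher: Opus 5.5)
Your reduction of the large heights matches the paper. You identify the hypothesis with the $S_1$ mass assumption (with $t=s$, or $t=\infty$ for the first alternative, since $m_0=1$ means the adjoined zero is not a singleton), invoke Lemma~\ref{lem:app-uniform} for $H\ge7$, $(H,t)=(5,2)$ and $H=6$, $2\le t\le5$, and exclude $t=1$. Your residual list ($H=4$; $H=5$ with $t\in\{3,4,5,\infty\}$; $H=6$ with $t\in\{6,\infty\}$) is exactly the paper's.

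Those residual cases, however, are the entire content of the proof, and the methods you name for them do not work as stated. Lowering $\theta_0$ by $2$ in the $S_0$ rows leaves constants $K_0+3-2\lambda$ equal to $2,1,2$ at $H=5$, $t=3,4,5$, and $2,3$ at $H=6$, $t=6,\infty$. That is precisely why these cases are not in Lemma~\ref{lem:app-uniform}. Fixing the minimum entry is already built into $K_0$, and you give no mechanism that produces $\Sigma\ge K$.

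The paper changes strategy here. It fixes the pair $G=(0,j)$ containing the unique zero. Since the minimum is $-1$, this forces $2\le j\le H$; your candidate $(-1,1)$ does not contain zero, and you omit $(0,5)$, $(0,6)$ and the separate case $z=0$. It then sets $b_0=0$ and $\lambda=0$ and uses $j$- and $z$-dependent weights. The resulting constants reach $K=3$, for instance at $(j,z)=(3,3),(4,4)$ in height four. They are killed by forced pairs together with edge-disjoint directed triangles such as $(-1,2)\to(1,4)\to G\to(-1,2)$, via $\Sigma+2\Omega\ge(2-X)_++(2-Y)_++2\min(X,Y)\ge3$. The triangle criterion you mention only covers $K\le2$.

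The more serious gap is that your $H=4$ plan, and your $\lambda=0$ fallback, never use the hypothesis of the lemma. The hypothesis enters the weight machinery only through $\lambda\kappa$ or through a direct count, and your choice $\lambda=H-t-2$ is at most $0$ throughout $H=4$.

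Without the hypothesis the conclusion is false. Take $k=6$ and $X_0=\{-1,0,1,1,2,2,3\}$ together with six $4$'s. Then $s=3$ and $s+1+m_4=10>k$, while all the standing assumptions (minimum $-1$, $m_0=1$, $H=4$, full positive support) hold. The factorization $(2,4)(2,4)(1,4)(1,4)(0,4)(-1,4)(3)$ has $\dinv=30$, whereas $\beta(X_0)=59$. So no $\lambda=0$ weight choice plus slack and triangle bookkeeping can settle $(H,s)=(4,3)$.

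The paper uses the hypothesis in two places:
\begin{itemize}
\item At height four with $m_3=1$, through the count that $m_4\le k-1$ forces at least $1+\mathbf{1}_{\{z=4\}}$ pairs with both entries at most three, while at most $1-\mathbf{1}_{\{z=3\}}$ of them can contain the unique three.
\item At $(H,s)=(4,2),(5,3),(5,4)$, through positive $\lambda$ with $\kappa=-(s+1)$ and negative weights at the exact-multiplicity levels $0$ and $s$. For example, it takes $\lambda=2$, $\theta_0=-1$, $\theta_2=-2$ at $(4,2)$, where the general $S_1$ row of Section~\ref{app:general-weights} gives only $2\mathcal M\le10$.
\end{itemize}
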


For $H\ge7$, case $S_1$ of Lemma~\ref{lem:app-uniform} proves the
lemma. At smaller heights, first allow only the last positive level
to be a singleton, together with one additional case at height four;
the next subsubsection treats earlier singletons.
Unless $z=0$, fix the unique pair $G=(0,j)$ containing zero and put
$b'=(b-\delta_j-\delta_z)_+$ for required counts $b$ with $b_0=0$.
The tables give weights in \eqref{eq:app-reserve}, with unlisted
weights zero, and all positive constants. The pair slacks follow
from \eqref{eq:app-out}.

By \eqref{eq:app-tightness}, a strict inequality requires
$\Sigma+2\Omega<K$. In the triangle arguments, distinct occurrences
of the two varying pairs give edge-disjoint triangles even when they
share the fixed pair, so their backward-edge contributions add.
The required ones and twos cannot share a pair; their pair slacks
are therefore separate contributions.

\paragraph{Height four.}
Assume $m_1,m_2\ge2$ and, when $m_3=1$, assume $m_4\le k-1$.
Take $b_{-1}=b_3=b_4=1$, $b_1=b_2=2$, and $\lambda=0$.
For $z=0$, zero weights give constant one. A strict inequality
forces $(-1,2),(1,4)$, whose directed triangle with the singleton
has all three weights one. For $z\ne0$, use
\[
\begin{array}{c|l|l}
 j&\text{nonzero weights}&z:K>0\\\hline
 2&\theta_1=\theta_4=1&\text{none}\\
 3&\theta_1=\ind{z=-1},\ \theta_3=\ind{z=1},\ \theta_2=\ind{z=4}
   &-1:1,\ 1:1,\ 2:2,\ 3:3,\ 4:2\\
 4&\theta_2=1,\ \theta_1=\ind{z=-1}&3:2,\ 4:3.
\end{array}
\]
For $(j,z)=(3,-1),(3,1)$, the constant is one, so every pair
slack and weighted excess must vanish. Among pairs containing a
required two, only $(-1,2)$ has zero slack. A required one must
lie in $(1,4)$, except that $(1,3)$ also has zero slack when $z=1$;
then the fixed three already exhausts $b_3$, and $\theta_3=1$
excludes this extra three. Thus $(-1,2),(1,4)$ are forced.
Their directed triangle with $G$ gives $2\Omega\ge2>K$.

For the other $j=3$ cases, put $X=n_{(1,4)}$, $Y=n_{(-1,2)}$.
If $X=0$, the two ones contribute two in pair slacks. When $K=3$,
they must lie in $(1,3)$, and a required two contributes another
unit through its pair slack in $(2,4)$ or its unused interaction with
$(1,3)$. If $Y=0$, a strict inequality forces $(-1,3)$, and every
required two lies in $(2,4)$; its pair slack and unused interaction
with $(-1,3)$ together supply the constant. Otherwise there
are $\min(X,Y)$ triangles
$(-1,2)\to(1,4)\to G\to(-1,2)$ with disjoint arcs. One suffices
for $K=2$. For $K=3$, the required ones and twos give
\[
 \Sigma+2\Omega\ge(2-X)_++(2-Y)_++2\min(X,Y)\ge3.
\]

For $j=4$, a strict inequality forces both ones into $(1,4)$, and
$b'_4=0$. If $m_3\ge2$, the required threes lie in $(-1,3)$.
Each two is in $(-1,2)$ or $(2,4)$: the former receives two units
from excess fours, and the latter one unused unit per required
three. These contributions supply $K$. If $m_3=1$, each pair
with both entries at most three, except $(-1,3)$, supplies $K$
through its pair slack or excess-four interactions, since $e_4\ge2$.
But $m_4\le k-1$ requires at least $1+\ind{z=4}$ such pairs,
while at most $1-\ind{z=3}$ can contain the unique three.

\paragraph{Height five.}
Assume $m_1,\ldots,m_4\ge2$, $m_5\ge1$. Take
$b_{-1}=b_5=1$, $b_1=\cdots=b_4=2$, and $\lambda=0$.
For $z=0$, the weight $\theta_3=1$ gives constant $-1$.
Otherwise use
\[
\begin{array}{c|l|l}
 j&\text{nonzero weights}&z:K>0\\\hline
 2&\theta_1=\theta_3=1&\text{none}\\
 3&\theta_3=1&-1:2,\ 2:1,\ 3:3,\ 4:3,\ 5:2\\
 4&\theta_3=1,\ \theta_{-1}=\ind{z=5}&-1:2,\ 3:3,\ 4:3,\ 5:3\\
 5&\theta_1=\theta_3=\theta_4=1&4:1,\ 5:1.
\end{array}
\]
For $z=-1$, the two required ones have positive pair slacks.
For $j=5$, vanishing pair slacks force $(-1,4),(3,5)$, whose interaction
has an unused unit.

For $j=3,4$, put $X=n_{(1,4)}$, $Y=n_{(2,5)}$, and $F=(-1,3)$.
Except for $(1,4)$, pairs containing one have positive pair slack;
except for $(2,5)$, so do pairs containing two. At $j=3$, a strict
inequality forces $X>0$: two other pairs containing one supply $K$,
including the weighted excess $e_3\theta_3=e_3$ when both are
$(1,3)$.

At $j=4$, a strict inequality forces $F$. Indeed, without $F$
the required negative entry must lie in $(-1,4)$, and the required
threes force $(3,5)$. Their unused interaction contributes one;
each $(1,4)$ contributes another against this $(3,5)$, while
required ones outside $(1,4)$ contribute through their pair slacks.
These are distinct terms of \eqref{eq:app-remainder}, giving
$\Sigma\ge(2-X)_++1+X\ge3$.
Now $X>0$: otherwise both ones must lie in $(1,5)$, giving two
in pair slacks and an excess five. Since $5\notin K(F)$, the
latter contributes a separate unit through $e_5t_5$.

If $Y>0$, use the triangles $G\to(2,5)\to(1,4)\to G$ for $j=3$,
and $F\to(2,5)\to(1,4)\to F$ for $j=4$. Their successive weights
are $(1,2,2)$ and $(1,2,1)$. Matching $\min(X,Y)$ distinct
$(1,4),(2,5)$ pairs gives triangles with disjoint arcs. One excludes
$K=1$; for $K\ge2$, the required ones and twos give
\[
 \Sigma+2\Omega\ge(2-X)_++(2-Y)_++2\min(X,Y)\ge3.
\]
If $Y=0$, the required twos supply $K$ unless $K=3$. They then
supply two, and a required five contributes another unit through
its pair slack or its $(3,5)$--$(1,4)$ interaction. For $(j,z)=(4,5)$,
use instead a second required three beyond $F$: it contributes
through its pair slack in $(1,3)$, its interaction with $(1,4)$ in $(3,5)$,
or the weighted excess $e_{-1}\theta_{-1}$ in another $F$.
These contributions do not use a pair containing two: a dual pair
cannot contain both two and three, and $(2,5)$ is absent when $Y=0$.

\paragraph{Height six.}
Assume $m_1,\ldots,m_5\ge2$, $m_6\ge1$. Take
$b_{-1}=b_6=1$, $b_1=\cdots=b_5=2$, and $\lambda=0$.
For $z=0$, the weight $\theta_1=1$ gives constant $-1$.
Otherwise use
\[
\begin{array}{c|ccc|l}
 j&\theta_{-1}&\theta_1&\theta_4&z:K>0\\\hline
 2&1&3&0&\text{none}\\
 3&3&1&0&-1:1\\
 4&2&1&\ind{z=6}&-1:2,\ 4:1,\ 5:2\\
 5&1+\ind{z=6}&1&1&-1:1,\ 4:2,\ 5:2,\ 6:2\\
 6&2&1+\ind{z=6}&1&-1:2,\ 5:2,\ 6:1.
\end{array}
\]
When $z=-1$, the two required ones lie in distinct pairs with
positive slack, giving $\Sigma\ge2\ge K$. For the other cases,
write $F=(-1,4)$. The interactions
$(1,5)$--$(4,6)$, $(1,4)$--$(3,5)$, and $(-1,5)$--$(4,6)$
each have $\widehat q=1$. Also $6\notin K(F),K((1,4))$, so each
excess six contributes one against each of these pairs.

For $j=4$, a negative pair other than $F$ has slack at least $K$,
so $F$ is forced. Among pairs containing three, only $(3,6)$ has
zero slack; the two required threes therefore force $(3,6)$ as
well, since $K\le2$. The only remaining choices for a one are
$(1,4),(1,5)$. The directed triangle
$F\to(3,6)\to(1,5)\to F$ excludes $(1,5)$, so both ones lie
in $(1,4)$.

For $(j,z)=(6,5)$, each required one outside $(1,4)$ and each
required three outside $(3,6)$ gives at least one in pair slack.
Since there are two of each and $K=2$, both pair types occur.
Their directed triangle with $(-1,5)$ excludes that negative pair.
A $(-1,6)$ gives one in pair slack and, because $b'_6=0$, an
excess six contributing against $(1,4)$. Every other negative
pair except $F$ has slack at least two. Thus $F$ is again forced.
In these cases, a three outside $(3,6)$ supplies $K$: the pairs
$(-1,3),(1,3)$ do so through their slack, and $(3,5)$ through
its slack and its unused interaction with $(1,4)$. Otherwise the
two required threes give two occurrences of $(3,6)$. Since
$b'_6\le1$, an excess six contributes two against $F,(1,4)$.

For $j=5$, a required one outside $(1,5)$ or a required three
outside $(3,6)$ has positive pair slack. Two of either kind would
supply $K=2$, so both types occur. Their triangle excludes $F$.
Each remaining four lies in $(1,4),(2,4)$ with positive slack,
or in $(4,6)$ with an unused unit against $(1,5)$.
Two fours remain unless $z=4$. In that case the required negative
entry also has positive pair slack and cannot share the four's
pair, since $F$ is excluded. Finally, at $(j,z)=(6,6)$, only
$(-1,5)$ and $(4,6)$ have zero slack among pairs containing the
required negative entry and a required four, respectively.
Their unused interaction supplies $K=1$.

\subsubsection{A First Positive Singleton}
\label{app:mixed-singleton}

Write $k=\sum_{v>s}m_v+s+1+\sigma$, $\sigma\ge0$.
If $A,B$ count pairs with both entries above $s$ and both entries
at most $s$, respectively, then
\begin{equation}
\label{eq:app-mixed-low-count}
 B=A+\ind{z>s}+s+1+\sigma.
\end{equation}
Thus $s=1$ is impossible: the only such low pair $(-1,1)$ occurs
at most once. The preceding bounds cover $H=s=4,5,6$ and
$H=4,s=3$, since $4+m_4\le k$. The additional $S_1$ cases of Lemma~\ref{lem:app-uniform}
cover $H=5,s=2$ and $H=6,s=2,3,4,5$. Only
$(H,s)=(4,2),(5,3),(5,4)$ remain.

Use $b_{-1}=b_0=1$ and $b_i=2-\ind{i\ge s}$ for
$1\le i\le H$. Apply \eqref{eq:app-reserve} with
$c=1$, $\varphi=w$, and $\kappa=-(s+1)$. Zero and $s$ have
exact multiplicity one, so a negative weight at either is
permitted; once its occurrence is fixed, omit pairs containing
that value.

For $(H,s)=(4,2)$, fix no pair and take
$\lambda=2$, $\theta_0=-1$, $\theta_2=-2$. Formula~\eqref{eq:app-out}
gives nonnegative pair slacks and constants
$(-1,-3,-3,-4,-3,-2)$ for $z=-1,0,1,2,3,4$.

For $H=5,z=0$, fix no pair and use $\lambda=0$, $\theta_3=1$;
the constant is $1-b_3\le0$. Otherwise fix $G=(0,j)$ and use
\[
\begin{array}{cc|c|l|r}
 s&j&\lambda&\text{nonzero weights}&\max_z K\\\hline
 3,4&2&0&\theta_1=1&0\\
 3&3&1&\text{none}&0\\
 4&3&1&\theta_4=-1&0\\
 3&4&2&\theta_3=-1&-3\\
 4&4&1&\theta_1=1&-1\\
 3&5&2&\text{none}&-3\\
 4&5&1&\theta_3=1&-1.
\end{array}
\]
Indeed, with $b'=(b-\delta_0-\delta_j-\delta_z)_+$, the
constant in \eqref{eq:app-reserve} is
\[
 K=1+w(z)+w(j)-q(z,G)-\theta(b')
                 -\lambda(s+\ind{z>s}+\ind{j>s}).
\]
Substitution in \eqref{eq:app-out} gives the pair inequalities
and the displayed constants; $j=z=s$ is unavailable. This
completes Lemma~\ref{lem:app-unique-zero}.

\subsection{Completion of the Proof}
\label{app:completion}

\begin{proof}[Proof of Lemma~\ref{ineq}]
Suppose all three maxima exceed $\beta(X_0)/2$, and choose
objects attaining them. Lemma~\ref{lem:app-support} gives
positive support, mass at most $k+2$, and minimum at most zero;
the $pTs$ object gives $H\ge1$.

For minimum at most $-3$, factor the $pTs$ object and apply
Lemma~\ref{lem:app-compression}. For minimum $-2$, use
Section~\ref{app:min-two-small} at original maxima one through
three, and Lemma~\ref{lem:app-min-two} at larger heights,
together with Lemma~\ref{lem:app-low-capacity} when $H=4$.

At minimum $-1$, Section~\ref{app:mass-sharpening} leaves only
$m_{X_0}(0)=1$. Original maxima one through three are excluded,
after translation by one, by Sections~\ref{app:ordinary-small}
and \ref{app:joint-loss}; Lemma~\ref{lem:app-low-capacity} supplies
$r\ge0$ for the joint bound, which needs no mass assumption. At higher
heights, \eqref{eq:app-final-mass} or the absence of positive
singletons allows Lemma~\ref{lem:app-unique-zero}.

Finally, minimum zero is excluded by
Lemma~\ref{lem:app-ordinary} at $L=0$. Thus the three maxima
cannot all exceed $\beta(X_0)/2$, proving the inequality.
\end{proof}


\end{document}